 \let\oldequation\equation
 \let\oldendequation\endequation
 \renewenvironment{equation}
   {\linenomath\oldequation}
   {\oldendequation\linenomath}
 \let\oldalign\align
 \let\oldendalign\endalign
 \renewenvironment{align}
   {\linenomath\oldalign}
   {\oldendalign\linenomath}
\theoremstyle{plain}
\newtheorem{theorem}{Theorem}[section]
\newtheorem{lemma}[theorem]{Lemma}
\newtheorem{corollary}[theorem]{Corollary}
\newtheorem{assumption}[theorem]{Assumption}
\theoremstyle{definition}
\newtheorem{remark}[theorem]{Remark}
\numberwithin{equation}{section}
\newcommand{\rest}{\left.\kern-2\nulldelimiterspace\right|_}
\newcommand{\norm}[2]{\left|#1\right|_{#2}}
\newcommand{\dnorm}[2]{\left\|#1\right\|_{#2}}
\newcommand{\zero}{{\mathbf0}}
\newcommand{\Id}{{\mathbf1}}
\newcommand*{\Bigcdot}{\raisebox{-.25ex}{\scalebox{1.25}{$\cdot$}}}
\newcommand{\clA}{{\mathcal A}}
\newcommand{\clB}{{\mathcal B}}
\newcommand{\clC}{{\mathcal C}}
\newcommand{\clD}{{\mathcal D}}
\newcommand{\clE}{{\mathcal E}}
\newcommand{\clI}{{\mathcal I}}
\newcommand{\clJ}{{\mathcal J}}
\newcommand{\clL}{{\mathcal L}}
\newcommand{\clS}{{\mathcal S}}
\newcommand{\clX}{{\mathcal X}}
\newcommand{\clY}{{\mathcal Y}}
\newcommand{\bbN}{{\mathbb N}}
\newcommand{\bbP}{{\mathbb P}}
\newcommand{\bbR}{{\mathbb R}}
\newcommand{\bfA}{{\mathbf A}}
\newcommand{\bfB}{{\mathbf B}}
\newcommand{\bfK}{{\mathbf K}}
\newcommand{\bfR}{{\mathbf R}}
\newcommand{\bfS}{{\mathbf S}}
\newcommand{\fkC}{{\mathfrak C}}
\newcommand{\fkS}{{\mathfrak S}}
\newcommand{\fkT}{{\mathfrak T}}
\newcommand{\bfb}{{\mathbf b}}
\newcommand{\bff}{{\mathbf f}}
\newcommand{\bfh}{{\mathbf h}}
\newcommand{\bfp}{{\mathbf p}}
\newcommand{\bfx}{{\mathbf x}}
\newcommand{\bfy}{{\mathbf y}}
\newcommand{\bfz}{{\mathbf z}}
\newcommand{\rmd}{{\mathrm d}}
\newcommand{\rme}{{\mathrm e}}
\definecolor{DarkBlue}{rgb}{0,0.08,0.45}
\definecolor{DarkRed}{rgb}{.65,0,0}
\definecolor{applegreen}{rgb}{0.55, 0.71, 0.0}
\newcounter{mymac@matlab}
\newcommand{\matlab}{MATLAB%
   \ifnum\value{mymac@matlab}<1%
   \textregistered%
   \setcounter{mymac@matlab}{1}%
   \fi%
  }
\newcommand{\red}{ \color{red} }
\newcommand{\bfPi}{{\mathbf \Pi}}
\newcommand{\todoautc}[3][]{%
    \ifthenelse{\equal{#1}{}}{\todo[size=\scriptsize]{{\bf#2} #3}{}}{\todo[color=#1,size=\scriptsize]{{\bf#2} #3}{}}%
}
\newcommand{\addref}[1][]{{\red\tt addref}}
\begin{document}
\title{Tracking optimal feedback control under uncertain parameters}
\author{Philipp A.~Guth$^{\tt1}$,  Karl Kunisch$^{\tt 2}$, and S\'ergio S.~Rodrigues$^{\tt1}$}
\thanks{
\vspace{-1em}\newline\noindent
{\sc MSC2020}: 34F05, 49J15, 49J20, 49N10, 93B52, 93C15, 93C20
\newline\noindent
{\sc Keywords}: Ensemble feedback control, Tracking control, Parameter-dependent systems
\newline\noindent
$^{\tt1}$ Johann Radon Institute for Computational and Applied Mathematics,
  \"OAW, 
  Altenbergerstrasse~69, 4040~Linz, Austria.\newline\noindent
$^{\tt2}$   Institute of Mathematics and Scientific Computing,    Karl-Franzens University of Graz,	     	
Heinrichstrasse~36, 8010 Graz, Austria, and Johann Radon Institute for Computational and Applied Mathematics,
  \"OAW, 
  Altenbergerstrasse~69, 4040~Linz, Austria.\newline\noindent
{\sc Emails}:
{\small\tt   philipp.guth@ricam.oeaw.ac.at,\quad karl.kunisch@uni-graz.at,\quad\\ \hspace*{3.4em}sergio.rodrigues@ricam.oeaw.ac.at}
 }

\begin{abstract}
Optimal control problems of tracking type for a class of linear systems with  uncertain parameters in the dynamics are investigated. An affine tracking feedback control input is obtained by considering the minimization of an energy-like functional depending on a finite ensemble of training/sample parameters. It is computed from the nonnegative definite solution of an associated differential Riccati equation. Simulations are presented showing the tracking performance of the computed input for trained as well as untrained parameters.
\end{abstract}

\maketitle


\pagestyle{myheadings} \thispagestyle{plain} \markboth{\sc P.A. Guth,  K. Kunisch, and S.S. Rodrigues}
{\sc Tracking optimal feedback control under uncertain parameters}



%
%
%


\section{Introduction}
Tracking problems over a finite time-horizon~$T>0$ for linear autonomous control systems in the form
\begin{align}
\dot{y} &= \clA y+ B u, \qquad y(0) = y_{\circ},\notag
\end{align}
are investigated, with state~$y(t)\in H$, for time~$t\in[0,T]$, and~$\dot y\coloneqq\frac{\rmd}{\rmd t}y$. The state space~$ H$ is a separable Hilbert space,~$\clA$ is the infinitesimal generator of a semigroup~$S(t)_{t\ge0}$, and~$B\colon  U\mapsto H$ is a bounded linear operator. The control space~$ U$ is another separable Hilbert space.
The initial condition~$y_\circ \in  H$ is given and the choice of the control input~$u \in L^2(0,T; U)$ is  at our disposal.

In many situations the dynamics depends on uncertain or unknown parameters. Thus, we address the design of a robust feedback control operator for parameter-dependent systems of the form
\begin{align}
\dot{y}_\sigma &= \clA_\sigma y_\sigma+ B u, \qquad y_\sigma(0) = y_{\circ},\label{eq:psys}
\end{align}
with an uncertain/unknown parameter~$\sigma$ in a given set $\fkS\in\bbR^S$, for some positive integer~$S$. More precisely, we aim at driving  the state~$y_\sigma$  as close as possible to a given target function~$g$. For this purpose, if we knew the exact value of~$\sigma$, we could follow a classical strategy by considering the minimization of energy-like functionals as
\begin{equation}\label{eq:J1}
\clJ_1(y_\sigma,u) = \frac12 \int_0^T \left( \|y_\sigma(t)- g(t)\|^2_{H} +  \|u(t)\|^2_{ U} \right) \mathrm dt + \frac{1}{2} \| x_\sigma(T)- g(T)\|_{H}^2,
\end{equation}
subject to~\eqref{eq:psys}. In this way we would obtain a feedback control input~$u(t)=K (t,y_\sigma(t))$, with the input feedback operator~$K=K_\sigma$ depending on~$\sigma$, arriving at
\begin{align}
\dot{y}_\sigma(t) &= \clA_\sigma y_\sigma(t)+ BK_\sigma (t,y_\sigma(t)), \qquad y_\sigma(0) = y_{\circ}.\notag
\end{align}

If we do not know~$\sigma$, we could try to use a guess (or an estimate) $\overline\sigma$ for it. Applying the feedback corresponding to the guess, we would arrive at
\begin{align}
\dot{y}_\sigma(t) &= \clA_\sigma y_\sigma(t)+ BK_{\overline\sigma} (t,y_\sigma(t))=\clA_{\overline\sigma} y_\sigma(t)+ BK_{\overline\sigma} (t,y_\sigma(t))+(\clA_\sigma-\clA_{\overline\sigma}) y_\sigma(t)\notag
\end{align}
 If our estimate is good enough so that~$\clA_\sigma-\clA_{\overline\sigma}$ is small, then, we can hope that this feedback input will provide good tracking properties.

However, finding a good estimate and subsequently computing the optimal input feedback~$K_{\overline\sigma}$ can be a time consuming task and can be impractical for real time applications. So, we propose to design an input control operator~$K=K_\varSigma$, depending on an \emph{a priori} fixed finite subset~$\varSigma\subset\fkS$, but independent of a particular realization of~$\sigma$.

\subsection{Related literature}
We could not find works, in the literature, on finite time-horizon (i.e., $0<T<+\infty$) tracking optimal feedback control problems for a general target~$g$ under uncertainty. Here we propose and analyze  a feedback input control operator inspired by the strategy in~\cite{GuthKunRod23}, for the case~$g=0$ in the case of infinite time-horizon, $T=+\infty$.

The strategy in~\cite{GuthKunRod23} applies classical optimal control theory for linear systems to an auxiliary extended system depending on an ensemble of sample parameters~$\varSigma$. In the context of tracking objectives
we use the optimal control theory developed, for example, in~\cite{hinze} and~\cite[Ch.~8.3]{sontag}. As we shall recall later, after a change of variables as~$x\coloneqq y-g$, the problem of tracking~$g$, under linear dynamics for~$y$, is reduced to the problem of tracking~$0$, under affine dynamics for~$x$, leading us to the theory in~\cite[Part~IV, Ch.~1, Sect.~7.1]{bensoussan2007representation}.

The addressed problem falls into the larger class of optimization under uncertainty, see,  for example~\cite{azmi2023analysis,guth2022parabolic,kunoth2013analytic,martinez2016robust} treating open-loop optimal control problems or stationary optimization problems. The present work focuses on optimal control problems in feedback form.

We underline that the uncertainty enters the system through the operator~$\clA_\sigma$, thus it does not necessarily enter in an affine manner as~$\clA_\sigma y=\clA y+\eta(\sigma)$. Noise~$\eta(\sigma)$ entering the dynamics in an affine manner is for instance investigated in~\cite[Ch.~3.6]{kwakernaak1969linear} and~\cite[Ch.~III]{fleming2006controlled}.

Controlled systems with uncertainties entering the system operator~$\clA_\sigma$ arise, for instance, in the case of parabolic equations with uncertain diffusion, reaction, or convection coefficients. Another case is that of damped wave-like equations with uncertain damping coefficients.

In the context of stabilization (i.e., $T=+\infty$), examples of research towards feedback controls for parameterized systems include~\cite{yedavalli2014robust}, where robustness criteria for linear systems, and error bounds are obtained for the perturbed system and control matrices under which a Riccati based nominal feedback law remains stable. In~\cite{GuthKunRod23_2,kramer2017feedback} online-offline strategies are proposed to stabilize a parameter-dependent controlled dynamical system. See also~\cite{chittaro2018asymptotic}, where stabilizability is investigated for an ensemble of Bloch equations, and~\cite{ryan2014simultaneous}, where a bilinear stabilizing feedback is constructed for an ensemble of oscillators.

In the context of controllability, at/in a given time~$T$, $0<T<+\infty$, the concept of ensemble controllability (controllability of ensembles of systems;  simultaneous controllability), is discussed in \cite{LAZAR2022265,helmke2014uniform,danhane2022conditions}, \cite[Ch.~5]{lions1988controlabilite}, \cite[ Ch.~11.3]{TucsnakWeiss09}. In~\cite{zuazua2014averaged} the notion of averaged controllabity, is discussed and a Kalman-type rank condition is derived; see also~\cite{coulson2019average}.

 \subsection{Contents and notation}\label{sS:cont_not}
 The manuscript is structured as follows. In Section~\ref{sS:extsystem} we consider an extended system with~$N$ copies of the dynamics corresponding to the parameters in a finite training ensemble~$\varSigma\subset\fkS$ and construct a time-dependent feedback input operator~$\bfK_\varSigma\colon [0,T]\times H^N\to U$ for this extended system in Section~\ref{sS:optim-ext}. Then, in Section~\ref{sS:ext-2-ori} we use~$\bfK_\varSigma$ to construct a feedback control~$K_\varSigma\colon [0,T]\times H\to U$ for the original system. Subsequently, we compare the cost of this later feedback with the optimal one in case we knew the uncertain parameter in Section~\ref{S:optimCost}; see Corollary~\ref{coro:smallrob}. Besides, in Section~\ref{C:comp-traj} we also compare the corresponding trajectories and control inputs; see Corollary~\ref{coro:optim-pair-comp}. Finally, results of numerical experiments are reported in Section~\ref{S:numEx}.

\bigskip
Concerning notation, given real numbers~$r<s$ and separable Banach spaces~$\clX$ and~$\clY$, the space of continuous functions from~$[r,s]$ into~$\clX$ is denoted by~$\clC([r,s];\clX)$ and the Bochner space of strongly measurable square integrable functions from the interval~$(r,s)$ into~$\clX$ is denoted by~$L^2(r,s;\clX)$ and we also denote the subspace~$W(r,s;\clX,\clY)\coloneqq \{v \in L^2(r,s;\clX)\,\mid\,\dot{v} \in L^2(r,s;\clY)\}$.
Since the time horizon~$T>0$ will be fixed throughout this manuscript, to shorten the exposition, sometimes we shall denote
\begin{equation}\label{eq:def-Space_T}
{\clX}_T \coloneqq L^2( 0,T;\clX)\quad\mbox{and}\quad W_T(\clX,\clY)\coloneqq W(0,T; \clX,\clY).
\end{equation}

By~$\clL(\clX,\clY)$ we denote the space of linear continuous mappings from~$\clX$ into~$\clY$, and in case~$\clX=\clY$ we use the shorter~$\clL(\clX)\coloneqq\clL(\clX,\clX)$.


\section{Feedback controls for tracking objectives}\label{S:feedback}
We fix a positive integer~$N$ and a finite ensemble~$\varSigma\coloneqq (\sigma_i)_{i=1}^{N}\subseteq\fkS$. Further, we consider a more general version of~\eqref{eq:J1} as follows; see~\cite[Part~IV, Ch.~1, Eq.~(1.2)]{bensoussan2007representation}. We fix two more separable Hilbert spaces, $Y$ and~$Z$, and two bounded linear operators~$Q\colon  H\to Y$ and~$P\colon  H\to Z$. Then, we look for a control input~$u\in L^2(0,T; U)$, which minimizes
\begin{equation}\label{eq:obj}
\begin{split}
\clJ(\bfy_\varSigma,u) &\coloneqq  \int_0^T \left( \frac12\|u\|_ U^2+\frac{1}{2N}  \sum_{i=1}^N \|Qy_{\sigma_i}(s) - Qg(s)\|_ Y^2\right) \mathrm ds\\
&\quad +  \frac{1}{2N} \sum_{i=1}^N \|P y_{\sigma_i}(T) - P g(T)\|_ Z^2,
\end{split}
\end{equation}
with~$\bfy_\varSigma\coloneqq(y_{\sigma_1},y_{\sigma_2},\dots,y_{\sigma_N})$ and each pair~$(y_{\sigma_i},u)$ subject to \eqref{eq:psys} with~$\sigma=\sigma_i$.

We can find the minimizing control input in feedback form~$u_\sigma(t) = K(t,y_\sigma(t))$,~$t \in [0,T]$, where~$K=K_\varSigma$ is affine on the  difference~$y_\sigma(t) - g(t)$, with a translation part depending on the residual of the target~$g$ when plugged into the uncontrolled system. We aim at a robust feedback~$K$, in the sense that by applying~$K$ for parameters~$\sigma \in \varSigma$, we should observe the desired tracking property towards~$g$. With such a feedback input, for any given fixed~$\sigma \in \fkS$, system \eqref{eq:psys} reads
\begin{align}
\dot{y}_\sigma(t) &= \clA_\sigma y_\sigma(t) + B K(t,y_\sigma(t)), \qquad y_\sigma(0) = y_{\circ},\notag
\end{align}

 We shall assume that the state space~$ H$ is a pivot space, that is, we will identify it with its continuous dual,~$ H= H'$. Further, we assume that~$ U$ is isomorphic to a closed subspace of~$ H$, so that we can consider~$ U$ as a pivot space as well, $ U= U'$. These identifications are common and convenient in (optimal) control applications (cf.~introductory notation in~\cite[Ch.~4]{TucsnakWeiss09}).

\subsection{Extended system.}\label{sS:extsystem}
For each~$\sigma \in \fkS$, it is assumed that~$\clA_\sigma$ is the infinitesimal  generator of  a $\clC_0$-semigroup $S_\sigma(t)_{t\ge 0}$ of bounded linear operators on $ H$.
The adjoint operator to ~$\clA_{\sigma}$ in $ H$ is denoted by~$\clA_{\sigma}^\ast$.
Equipping the domain~$\clD(\clA_{\sigma})$ of~$\clA_\sigma$ in~$H$ with the inner product~$\langle u,v \rangle_{\clD(\clA_\sigma)} \coloneqq \langle u, v\rangle_{ H} + \langle \clA_{\sigma} u , \clA_\sigma v\rangle_{ H}$,~$u,v\in \clD(\clA_\sigma)$, with the topology induced by the graph norm, $\clD(\clA_\sigma)$ becomes a Hilbert space, and $\clA_\sigma \in \clL(\clD(\clA_\sigma),  H)$.

Next, let us consider the Cartesian product~$ H^N \coloneqq \bigtimes_{i=1}^N  H$ with the usual inner product~$\langle \bfh,\widetilde \bfh\rangle_{ H^N} \coloneqq \sum_{i=1}^N ( h_i, \tilde h_i)_ H$, for $\bfh = (h_1,h_2,\ldots,h_N)$ and ~$\widetilde \bfh = (\tilde h_1,\tilde h_2, \ldots \tilde h_N)$. Further, we define the extension operator~$\clE\coloneqq \clE_N$ as
\begin{align}\notag
\clE: H &\to H^N,\qquad  z \mapsto  (z,z,\ldots,z).
\end{align}
 Its adjoint~$\clE^\ast:  H^N \to  H$ is given by
\begin{equation}\notag
\clE^\ast: H^N \to H,\qquad (w_1,w_2,\ldots,w_N) \;\mapsto \sum\limits_{i=1}^N w_{i}.
\end{equation}

Using the ensemble of operators~$\clA_{\sigma_i}$, $\sigma_i \in \varSigma$, we introduce the ensemble operator
\begin{align}
\bfA_{\varSigma}: \clD(\bfA_{\varSigma}) \subseteq  H^N &\to  H^N,\qquad
 w  \mapsto (\clA_{\sigma_1}w_{1},\clA_{\sigma_2}w_{2}, \ldots, \clA_{\sigma_N}w_{N}),\label{eq:extA}
\end{align}
where  $\clD(\bfA_{\varSigma}) = \bigtimes_{i=1}^N \clD(\clA_{\sigma_i})$.
We also define, for a given Hilbert space~$X$ and an operator~$L\in\clL(H,X)$,
\begin{equation}
L_\rme\in\clL(H^N,X^N),\qquad (w_1,w_2,\ldots,w_N)\coloneqq (Lw_1,Lw_2,\ldots,Lw_N).\notag
\end{equation}
Now, we can reformulate the problem of minimizing~\eqref{eq:obj} with each~$(y_{\sigma_i},u)$ subject to~\eqref{eq:psys}, for all $\sigma\in\varSigma$, as:
\begin{subequations}\label{eq:OPext}
\begin{align}
&\mbox{minimize }&&\clJ(\bfy_{\varSigma},u) = \int_0^T \!\left(\frac12\|u(s)\|_ U^2+ \frac{1}{2N} \left\|  Q_\rme\bfy_{\varSigma}(s) - Q_\rme\clE  g(s)\right\|_{Y^N}^2 \right)\rmd s \notag\\
&&&\hspace{5em}+ \frac{1}{2N} \left\|( P_\rme\bfy_\varSigma(T) - P_\rme\clE  g(T)\right\|^2_{Z^N}
\label{eq:extobj}\\
&\mbox{subject to }&&\dot{\bfy}_{\varSigma}(t) = \bfA_{\varSigma} \bfy_{\varSigma}(t) + \bfB u(t),\qquad
\bfy_{\varSigma}(0) = \bfy_{\circ},\label{eq:extsys}
\end{align}
\end{subequations}
where $\bfy_{\circ} = \clE y_\circ \in  H^N$ and $\bfB = \clE B:  U \to  H^N$.

We observe that~$\bfA_\varSigma$, defined in~\eqref{eq:extA}, is the infinitesimal generator of the
$\clC_0$-semigroup
\begin{align}
\bfS_{\varSigma}(t):  H^N \to  H^N,\qquad z \mapsto (S_{\sigma_1}(t)z_{1},S_{\sigma_2}(t)z_{2},\ldots,S_{\sigma_N}(t)z_{N}) \notag
\end{align}
of bounded linear operators on~$H^N$.


\subsection{Optimal control input for the extended system.}\label{sS:optim-ext}
Based on existing results for Riccati equations, in this section we ensure the existence and uniqueness of a feedback control  for problem~\eqref{eq:OPext}.

For this purpose we introduce the cone~$\Omega( H^N)$  of bounded, linear, self-adjoint, and nonnegative operators in $ H^N$ endowed with the norm of $\clL( H)$.
The Riccati operators will be sought as strongly continuous operator-valued functions in the set~$\clS \coloneqq \clC_s([0,T],\Omega( H^N))$, which is endowed with the topology of strong convergence, i.e.,~$F_n \to F$ if and only if~$\forall x \in  H^N$ there holds~$F_n x \to Fx$ in~$\clC([0,T]; H^N)$, see e.g.,~\cite[ Part IV, Section 2.1]{bensoussan2007representation}

For simplicity, we shall transform our problem of tracking~$g$ to a problem of tracking~$0$ (subject to an inhomogeneous state equation). In this manner we can more directly profit from existing theory on Riccati equations.

Let the target satisfy~$g \in W^{1,2}(0,T; H) \bigcap L^2(0,T;\bigcap_{i=1}^N\clD(\clA_{\sigma_i}))$. Denoting
\begin{equation}\label{eq:y-to-x}
\bfx_\varSigma \coloneqq \bfy_\varSigma - \clE g\quad\mbox{and}\quad\bff \coloneqq \bfA_\varSigma \clE g- \clE \dot{g} ,
\end{equation}
 problem~\eqref{eq:OPext} becomes the problem
\begin{subequations}\label{eq:OPextx}
\begin{align}
&\mbox{minimize }&&
\clJ(\bfx_{\varSigma},u) = \int_0^T \left( \frac{1}{2}\|u(s)\|_{ U}^2+ \frac{1}{2N} \left\|  Q_\rme\bfx_{\varSigma}(s) \right\|_{Y^N}^2 \right) \,\rmd s\notag\\
&&&\hspace{5em}+ \frac{1}{2N} \left\| P_\rme\bfx_{\varSigma}(T)\right\|^2_{ Z^N}\label{eq:extobjx}
\\
&\mbox{subject to }&&
\dot{\bfx}_{\varSigma}(t) = \bfA_{\varSigma} \bfx_{\varSigma}(t) + \bff(t) + \bfB u(t),\qquad
\bfx_{\varSigma}(0) = \bfx_{\circ},\label{eq:extsysx}
\end{align}
\end{subequations}
with~$\bfx_{\circ} \coloneqq \bfy_{\circ} - \clE g(0)$.

Consider, for time~$t\in(0,T)$, the operator differential Riccati equation
\begin{align}
\dot{\bfPi}_\varSigma &= \bfPi_\varSigma \bfA_{\varSigma} + \bfA_{\varSigma}^\ast \bfPi_\varSigma - \bfPi_\varSigma\bfB \bfB^\ast \bfPi_\varSigma+ \frac1N Q_\rme^*Q_\rme,\qquad
\bfPi_\varSigma(0) = \frac{1}{N} P_\rme^*P_\rme.\label{eq:extRiccati}
\end{align}
The dynamics equation in~\eqref{eq:extRiccati} is understood in the sense that for any $\bfx,\bfy \in \clD(\bfA_\varSigma)$ the function $t\mapsto\langle \bfPi_\varSigma(t)\bfx, \bfy\rangle_{ H^N}$ is differentiable  and satisfies for almost all~$t\in(0,T)$,
\begin{align}
\frac{\mathrm d}{\mathrm dt} \langle \bfPi_\varSigma(t) \bfx,\bfy \rangle_{ H^N} &= \langle \bfPi_\varSigma(t) \bfA_\varSigma \bfx, \bfy \rangle_{ H^N} + \langle\bfPi_\varSigma(t)\bfx, \bfA_\varSigma \bfy \rangle_{ H^N}\notag\\
&\quad- \langle \bfB^\ast\bfPi_\varSigma(t) \bfx, \bfB^\ast \bfPi_\varSigma(t) \bfy \rangle_{U} + \frac{1}{N} \langle  Q_\rme\bfx,  Q_\rme\bfy \rangle_{Y^N}.\notag
\end{align}
 From~\cite[Thm.~2.1, Part IV, Ch.~1]{bensoussan2007representation} we know that~\eqref{eq:extRiccati} admits a unique solution in~$ \mathcal{S}$.
In the following theorem we recall from~\cite[Thm.~7.1, Part IV, Ch.~1]{bensoussan2007representation}, how to construct the optimal pair of feedback control and corresponding state of~\eqref{eq:extobjx} subject to~\eqref{eq:extsysx} (and thus for~\eqref{eq:extobj} subject to~\eqref{eq:extsys}).
\begin{theorem}
 Let~$\bfPi_\varSigma$ denote the unique solution of~\eqref{eq:extRiccati} in~$\mathcal{S}$.
Then, there exists a unique  minimizer~$(\bfx,u)$  for~\eqref{eq:OPextx}. This optimal pair satisfies, for~$t\in(0,T)$,
\begin{itemize}
\item[1.] $ u(t)$ is given in feedback form by
\begin{align}\label{eq:uopt}
 u(t) = - \bfB^\ast \left( \bfPi_\varSigma(T-t) \bfx(t) + \bfh(t) \right);
\end{align}
\item[2.] $ \bfx$ is the mild solution to the closed-loop system
\begin{align}
\dot{ \bfx}(t) &= \left(\bfA_\varSigma - \bfB\bfB^\ast \bfPi_\varSigma(T-t) \right) \bfx(t) - \bfB\bfB^\ast \bfh(t) + \bff(t),\qquad \bfx(0) = \bfx_\circ;\label{eq:xopt}
\end{align}
where
\begin{align}
-\dot{\bfh}(t) &= \left(\bfA_\varSigma^\ast - \bfPi_\varSigma(T-t)\bfB\bfB^\ast\right) \bfh(t) + \bfPi_\varSigma(T-t)\bff(t),\qquad
\bfh(T) = \mathbf{0};\label{eq:h}
\end{align}
\item[3.] the optimal cost is given by
\begin{equation}
\label{eq:mincost}
\begin{split}
\clJ(\bfx, u) &= \frac{1}{2} \langle \bfPi_\varSigma(T) \bfx_\circ,\bfx_\circ\rangle_{ H^N} + \langle \bfh(0), \bfx_\circ\rangle_{ H^N} \\
&\quad+ \int_0^T \left( \langle \bfh(s), \bff(s) \rangle_{ H^N} - \frac{1}{2}\|\bfB^\ast \bfh(s)\|_{ U}^2\right) \mathrm ds.
\end{split}
\end{equation}
\end{itemize}
\end{theorem}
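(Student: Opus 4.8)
The plan is to treat the reduced problem \eqref{eq:OPextx}--\eqref{eq:extsysx} as a standard linear--quadratic tracking problem on the Hilbert space $L^2(0,T;U)$ and to reproduce the Riccati decoupling underlying \cite[Thm.~7.1, Part IV, Ch.~1]{bensoussan2007representation}. \emph{Step 1 (existence and uniqueness).} The control-to-state map $u\mapsto\bfx_\varSigma$ defined by \eqref{eq:extsysx} is affine and continuous from $L^2(0,T;U)$ into $\clC([0,T];H^N)$, since $\bfA_\varSigma$ generates the $\clC_0$-semigroup $\bfS_\varSigma$, $\bfB\in\clL(U,H^N)$, and $\bff\in L^2(0,T;H^N)$ by the regularity assumed on $g$ in \eqref{eq:y-to-x}. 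Substituting into \eqref{eq:extobjx} yields a reduced functional $u\mapsto\clJ$ that is a continuous quadratic form; it is strictly convex and coercive because of the term $\tfrac12\|u\|_U^2$ together with the nonnegativity of the tracking terms. The direct method then gives a unique minimizer $(\bfx,u)$.

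\emph{Step 2 (optimality system).} First I would characterize the minimizer by the first-order condition. Introducing the adjoint (costate) $\bfp\in\clC([0,T];H^N)$ solving the backward equation
\begin{align}
-\dot{\bfp}(t) &= \bfA_\varSigma^\ast\bfp(t) + \tfrac1N Q_\rme^\ast Q_\rme\,\bfx(t),\qquad \bfp(T)=\tfrac1N P_\rme^\ast P_\rme\,\bfx(T),\notag
\end{align}
the stationarity of $\clJ$ in $u$ gives the optimal control $u(t)=-\bfB^\ast\bfp(t)$. Together with \eqref{eq:extsysx} this produces the coupled forward--backward optimality system for $(\bfx,\bfp)$.

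\emph{Step 3 (Riccati decoupling).} The key step is to decouple this two-point boundary value problem by the ansatz $\bfp(t)=\bfPi_\varSigma(T-t)\bfx(t)+\bfh(t)$. Differentiating in $t$, inserting $\dot\bfx$ from \eqref{eq:extsysx} and $\dot\bfp$ from the adjoint equation, and using the weak form of \eqref{eq:extRiccati} to cancel the quadratic term and the $Q_\rme^\ast Q_\rme$ term, the remaining inhomogeneous terms must vanish separately; this forces $\bfh$ to solve \eqref{eq:h} with terminal condition $\bfh(T)=\zero$, while the terminal condition $\bfp(T)=\tfrac1N P_\rme^\ast P_\rme\bfx(T)$ is consistent with $\bfPi_\varSigma(0)=\tfrac1N P_\rme^\ast P_\rme$. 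Substituting the ansatz into $u=-\bfB^\ast\bfp$ yields the feedback form \eqref{eq:uopt}, and inserting \eqref{eq:uopt} into \eqref{eq:extsysx} gives the closed-loop state equation \eqref{eq:xopt}. Here I would use that \eqref{eq:extRiccati} has a unique solution $\bfPi_\varSigma\in\clS$, already guaranteed in the excerpt, so that $\bfh$ is the unique mild solution of the linear equation \eqref{eq:h}.

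\emph{Step 4 (optimal cost) and main obstacle.} To obtain \eqref{eq:mincost} I would compute $\frac{\rmd}{\rmd t}\bigl(\tfrac12\langle\bfPi_\varSigma(T-t)\bfx(t),\bfx(t)\rangle_{H^N}+\langle\bfh(t),\bfx(t)\rangle_{H^N}\bigr)$ along the optimal trajectory, replace $\dot\bfx$, $\dot\bfPi_\varSigma$, $\dot\bfh$ by means of \eqref{eq:xopt}, \eqref{eq:extRiccati}, \eqref{eq:h} so that the integrand collapses to the running cost in \eqref{eq:extobjx} up to the terms $\langle\bfh,\bff\rangle_{H^N}-\tfrac12\|\bfB^\ast\bfh\|_U^2$, and then integrate over $[0,T]$, using $\bfPi_\varSigma(0)=\tfrac1N P_\rme^\ast P_\rme$ and $\bfh(T)=\zero$ to evaluate the boundary terms. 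The main obstacle is rigor in the infinite-dimensional setting: $\bfPi_\varSigma$ is only strongly continuous and \eqref{eq:extRiccati} holds merely in the weak sense recalled after it, while $\bfx$ is only a mild solution that need not take values in $\clD(\bfA_\varSigma)$. The pointwise differentiations above are therefore formal and must be justified by a density/approximation argument---regularizing the data ($\bfx_\circ\in\clD(\bfA_\varSigma)$, smoother $\bff$), carrying out the computation for strong solutions, and passing to the limit---or, as the statement indicates, by directly invoking \cite[Thm.~7.1, Part IV, Ch.~1]{bensoussan2007representation}, whose proof already supplies these technical justifications.
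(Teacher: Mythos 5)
Your proposal is correct and follows exactly the route the paper takes: the paper offers no proof of its own but simply recalls the statement from \cite[Thm.~7.1, Part IV, Ch.~1]{bensoussan2007representation}, and your sketch (strict convexity for existence/uniqueness, the forward--backward optimality system, the decoupling ansatz $\bfp(t)=\bfPi_\varSigma(T-t)\bfx(t)+\bfh(t)$, and the completion-of-squares identity for the cost) is precisely the standard argument underlying that citation. You also correctly identify the only genuine technical issue --- justifying the formal differentiations when $\bfPi_\varSigma$ is merely strongly continuous and $\bfx$ only a mild solution --- and correctly note that this is what the cited reference supplies.
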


\subsection{From the extended system to the original one}\label{sS:ext-2-ori}
By construction of the feedback~$\bfPi_\varSigma$, we expect that~$\left\|  Q_\rme\bfx_{\varSigma} \right\|_{L^2(0,T;Y^N)}^2 = \left\|  Q_\rme\bfy_{\varSigma} - Q_\rme\clE  g\right\|_{L^2(0,T;Y^N)}^2 $ will be small. Consequently, we can expect that the component~$Q(y_{\sigma} -  g)$ of the difference~$y_{\sigma} -  g$ to the target~$g$ will be small for all~$\sigma\in\varSigma$.
By solving the extended system we obtain a tracking control input for all~$\sigma\in\varSigma$.
In our context this input is of auxiliary nature, indeed  this auxiliary extended state is not available in practice. Rather the goal of this section is to propose a feedback depending only on the state of the original unknown system.

We  define the feedback input operator~$K_\varSigma\colon [0,T]\times H\to U$ which is constructed by means of ~$\bfK_\varSigma(t,\bfz)\coloneqq- \bfB^\ast \left( \bfPi_\varSigma(T-t) \bfz + \bfh(t) \right)\colon [0,T]\times H^N\to U$ computed for the extended system,  by 
\begin{align}\label{eq:robFB}
K_{\varSigma}(z)\coloneqq K_{\varSigma}(t,z) &\coloneqq  -  \bfB^\ast \left(\bfPi_\varSigma(T-t) \clE z + \bfh(t) \right),\quad\mbox{for}\quad t\in[0,T].
\end{align}

\begin{remark}
In~\eqref{eq:robFB}, the ``definition~$K_{\varSigma}(z)\coloneqq K_{\varSigma}(t,z)$'' simply means that sometimes, for simplicity of the exposition, we will omit the dependence of~$K_{\varSigma}$ on~$t$.
\end{remark}
Therefore, we arrive at the closed-loop system
\begin{align}\label{eq:xsig-robFB}
\dot{x}_{\varSigma,\sigma}&= \clA_{\sigma} x_{\varSigma,\sigma} + B K_{\varSigma}(x_{\varSigma,\sigma}) + \clA_\sigma g- \dot{g},\qquad
x_{\varSigma,\sigma}(0) = x_\circ.
\end{align}
 Defining~$y_{\varSigma,\sigma}\coloneqq x_{\varSigma,\sigma} + g$, we find~$\dot{y}_{\varSigma,\sigma}= \clA_{\sigma} y_{\varSigma,\sigma} + B  K_{\varSigma}(y_{\varSigma,\sigma}- g)$, hence
\begin{subequations}\label{eq:ysig-robFB}
\begin{align}
\dot{y}_{\varSigma,\sigma}&=\clA_{\sigma} y_{\varSigma,\sigma} + B K_{\varSigma}^{[1]}y_{\varSigma,\sigma}+ B K_{\varSigma}^{[0]},\qquad
y_{\varSigma,\sigma}(0) = y_\circ,
\intertext{with}
 K_{\varSigma}^{[1]}&=-  \bfB^\ast \bfPi_\varSigma(T-t) \clE\quad\mbox{and}\quad
 K_{\varSigma}^{[0]}\coloneqq   \bfB^\ast \left(\bfPi_\varSigma(T-t) \clE g(t)  - \bfh(t) \right),
\end{align}
\end{subequations}

Since  the linear part $B K_{\varSigma}^{[1]}$ of the affine feedback is strongly continuous, that is,
$-B \bfB^\ast \bfPi_\varSigma(T-\cdot) \clE z \in \clC([0,T]; H)$ for each~$ z\in  H$, and  the translation  term  $B K_{\varSigma}^{[0]}$ is in $L^2(0,T; H)$, the above closed-loop  system~\eqref{eq:ysig-robFB} has a unique solution $y_{\varSigma,\sigma} \in \clC([0,T]; H)$ for each $\sigma \in \varSigma$, (see, e.g., \cite[Prop.~3.4, Part II, Ch.~1]{bensoussan2007representation}).  Consequently, there is a unique solution~$x_{\varSigma,\sigma} \in \clC([0,T]; H)$ for system~\eqref{eq:xsig-robFB}, for any given~$\sigma \in \varSigma$.

Finally, note that the feedback $ K_{\varSigma}$ can also be applied if the true parameter is not a member of the training set $\varSigma$, provided that $g\in W^{1,2}(0,T; H)\cap L^2(0,T;\mathcal{D}(\mathcal{A}_\sigma))$  (cf., \eqref{eq:y-to-x}).  This will be the generic case in the following sections.

\subsection{Order of sequence of training parameters}\label{sS:ensemble-set}
By construction the matrix~$\bfA_\Sigma$, defining the free dynamics of the extended auxiliary system as in~\eqref{eq:extsys}, depends on the order of the training parameters in the sequence~$\varSigma=(\sigma_i)_{ i=1}^{N}$. In spite of this fact, we show that the resulting feedback input~$K_{\varSigma}(z)$ as in~\eqref{eq:robFB}, for the original system, is independent of that order. In this sense, we can speak about {\em  set} of training parameters, instead of {\em sequence} of training parameters.
Indeed, let~$\vartheta\in\bbR^{N\times N}$ be a permutation matrix~$\vartheta\colon\bbR^N\to\bbR^N$, and let~$\Theta=\Theta(\vartheta)\in\clL(H)^{N\times N}$ be the permutation~$\Theta\colon H^N\to H^N$ constructed as follows: the entries~$1$ of~$\vartheta$ are replaced by the identity operator~$\Id=\Id_H$ in~$H$ and the entries~$0$ are replaced by the zero operator~$\zero=\zero_H$ in~$H$. 

As an example, in case~$N=3$,
\begin{align}
\mbox{if}\quad\vartheta=\begin{bmatrix}
0&0&1\\ 1&0&0\\0&1&0
\end{bmatrix},\quad\mbox{then}\quad\Theta(\vartheta)=\begin{bmatrix}
\zero_H&\zero_H&\Id_H\\ \Id_H&\zero_H&\zero_H\\\zero_H&\Id_H&\zero_H
\end{bmatrix}.\notag
\end{align}

Identifying the sequence~$\varSigma$ with a column vector in~$\bbR^{N\times 1}$, we permute the parameters as~$\varSigma\to \vartheta\varSigma$. For the permuted/reordered vector, the extended matrix will read
\begin{align}
\bfA_{\vartheta\varSigma}=\Theta\bfA_{\varSigma}\Theta^\top\notag
\end{align}
where~$\Theta^\top\coloneqq\Theta(\vartheta^\top)$. Recall that, since~$\vartheta$ and~$\Theta$ are permutations we have~$\vartheta^\top=\vartheta^{-1}$ and~$\Theta^\top=\Theta^{-1}$. By~\eqref{eq:extRiccati} we find that
$\bfR_\vartheta\coloneqq\Theta{\bfPi}_\varSigma\Theta^\top$ solves
\begin{align}
\dot\bfR_\vartheta&=\Theta\dot{\bfPi}_\varSigma\Theta^\top = \Theta\bfPi_\varSigma \bfA_{\varSigma}\Theta^\top + \Theta\bfA_{\varSigma}^\ast \bfPi_\varSigma\Theta^\top - \Theta\bfPi_\varSigma\bfB \bfB^\ast \bfPi_\varSigma\Theta^\top+ \frac1N \Theta Q_\rme^*Q_\rme\Theta^\top\notag\\
&= \bfR_\vartheta\bfA_{\vartheta\varSigma} +\bfA_{\vartheta\varSigma}^\ast \bfR_\vartheta -  \bfR_\vartheta\Theta\bfB \bfB^\ast \Theta^\top\bfR_\vartheta + \frac1N \Theta Q_\rme^* Q_\rme\Theta^\top,\notag\\
\bfR_\vartheta(0) &= \frac{1}{N} \Theta P_\rme^* P_\rme\Theta^\top\notag.
\end{align}
Now, observe that~$\Theta\clE=\clE$, which gives us the analogue of~\eqref{eq:extRiccati}
\begin{align}
\dot\bfR_\vartheta &=  \bfR_\vartheta\bfA_{\vartheta\varSigma} +\bfA_{\vartheta\varSigma}^\ast \bfR_\vartheta -  \bfR_\vartheta\bfB \bfB^\ast \bfR_\vartheta+ \frac1N Q_\rme^* Q_\rme,\qquad
\bfR_\vartheta(0) =  \frac{1}{N}  P_\rme^* P_\rme.\notag
\end{align}
Therefore~$\bfR_\vartheta$ is the solution of the Riccati equation for the permuted sequence of parameters.
Consequently, the feedback input in~\eqref{eq:robFB} will read, since we also have~$\Theta^\top\clE=\clE$
\begin{align}
K_{\vartheta\varSigma}(z)&=  -  \bfB^\ast \left(\bfR_\vartheta(T-t) \clE z + \bfh_\vartheta(t) \right)=-  \bfB^\ast \left({\bfPi}_\varSigma(T-t) \clE z + \bfh_\vartheta(t) \right),\notag
\end{align}
where~$\bfh_\vartheta$ satisfies the analogue of~\eqref{eq:h},
\begin{align}
-\dot{\bfh_\vartheta}(t) &= \left(\bfA_{\vartheta\varSigma}^\ast - \bfR_\vartheta(T-t)\bfB\bfB^\ast\right) \bfh_\vartheta(t) + \bfR_\vartheta(T-t)\bff_\vartheta(t),\qquad\bfh_\vartheta(T)=0 ,\label{eq:h-perm}
\end{align}
with the analogue of~$\bff$ in~\eqref{eq:y-to-x},
\begin{equation}
\bff_\vartheta \coloneqq \bfA_{\vartheta\varSigma} \clE g- \clE \dot{g}.\notag
\end{equation}

Thus, to show that~$K_{\vartheta\varSigma}(z)=K_{\varSigma}(z)$, it is enough to show that
$\bfB^*\bfh_\vartheta=\bfB^* \bfh$.

By~\eqref{eq:h-perm}, using~$\Theta\bfB=\Theta\clE B=\clE B=\bfB$ and~$\Theta^\top\bfB=\Theta^\top\clE B=\clE B=\bfB$,
\begin{align}
-\Theta^\top\dot{\bfh_\vartheta}(t) &= \left(\Theta^\top\bfA_{\vartheta\varSigma}^\ast - \Theta^\top\bfR_\vartheta(T-t)\bfB\bfB^\ast\right) \bfh_\vartheta(t) +\Theta^\top \bfR_\vartheta(T-t)\bff_\vartheta(t),\notag\\
&= \left(\bfA_{\varSigma}^\ast\Theta^\top - {\bfPi}_\varSigma(T-t)\Theta^\top\bfB\bfB^\ast\right) \bfh_\vartheta(t) +{\bfPi}_\varSigma(T-t)\Theta^\top\bff_\vartheta(t),\notag\\
&= \left(\bfA_{\varSigma}^\ast - {\bfPi}_\varSigma(T-t)\bfB\bfB^\ast\right) \Theta^\top\bfh_\vartheta(t) +{\bfPi}_\varSigma(T-t)\bff(t).\label{dyn-htheta}
\end{align}

Now, by~\eqref{eq:h} and~\eqref{dyn-htheta}, we find that~$d\coloneqq \Theta^\top\bfh_\vartheta-\bfh$
solves the linear system
\begin{align}
\dot d(t) &=  -\left(\bfA_{\varSigma}^\ast - {\bfPi}_\varSigma(T-t)\bfB\bfB^\ast\right) d ,\qquad d(0)=0,\notag
\end{align}
hence~$\Theta^\top \bfh_\vartheta(t)- \bfh(t)=d(t)=0$ for~$t\in[0,T]$, which implies
$\bfB^*\bfh_\vartheta=\bfB^*\Theta^\top\bfh_\vartheta=\bfB^* \bfh$.

\section{Optimal controls and costs}\label{S:optimCost}

In this section we investigate the cost associated with the feedback~\eqref{eq:robFB} and compare it to the optimal cost associated with the input~\eqref{eq:uopt} for the extended system~$(\bfA_\varSigma,\bfB)$. Further, we consider a comparison with the optimal cost for system~$(A_\sigma,B)$, corresponding to the case that the true parameter~$\sigma$ is known.
For such comparisons, we shall make additional assumptions on the family of operators~$\{\clA_\sigma\mid \sigma\in\fkS\}$.

We will assume that we have another separable Hilbert space~$V$ that is continuously and densely embedded in~$H$, which leads to the Gelfand triplet~$V \subset H \subset V'$. We also assume to be given a family of continuous bilinear forms~$a(\sigma;\Bigcdot,\Bigcdot)$, parametrized by~$\sigma\in\fkS$, each form being~$V$--$H$ coercive, more precisely,
\begin{equation}\label{eq:aux1}
\begin{split}
&\mbox{there exists}\quad (\rho, \theta)\in\bbR\times\bbR^+\quad\mbox{such that, for all}\quad( \sigma,v) \in \fkS\times V,\\
&\mbox{there holds}\quad a(\sigma;v,v) + \rho \|v\|_{H}^2 \ge \theta \|v\|_V^2.
\end{split}
\end{equation}
We associate with~$a(\sigma;\Bigcdot,\Bigcdot)$ the operator~$\clA_\sigma$ defined as
\begin{align}
\label{eq:bilinearform}
\begin{split}
\clD(\clA_\sigma) &\coloneqq \{v\in V\,\mid\,w\mapsto a(\sigma;v,w) \mbox{ is }H\mbox{-continuous}\},\\
\langle \clA_\sigma v,w\rangle_{ H} &\coloneqq -a(\sigma;v,w),\quad\forall v\in \clD(\clA_\sigma),\quad\forall w\in V.
\end{split}
\end{align}
The operators~$\clA_\sigma$ are closed and densely defined in~$H$ and can be uniquely extended to operators~$\clA_\sigma \in \clL(V,V')$. For each $\sigma \in \fkS$,~$\clA_\sigma$ generates an analytic semigroup~$S_\sigma(t)$ on~$H$, which is exponentially bounded (i.e., $\|S_\sigma(t)\|_{\clL(H,H)} \le e^{\rho t}$).

The following assumption will be made throughout the remainder of the paper.
\begin{assumption}\label{A:domindsig}
There exists a family~$a$ of bilinear forms satisfying~\eqref{eq:aux1} such that the operators~$\clA_\sigma$ are characterized by~\eqref{eq:bilinearform}. Furthermore, $\clD(\clA_\sigma)$ is independent of~$\sigma \in \fkS$ and~$\clD(\clA_\sigma)=\clD(\clA_\sigma^\ast)$ for all~$\sigma \in \fkS$.
\end{assumption}

By Assumption~\ref{A:domindsig},  we can introduce the common domain
\begin{equation}\label{eq:DAsame}
\clD_\clA=\clD(\clA_{\sigma})=\clD(\clA_{\sigma}^*),\quad\mbox{for all}\quad\sigma \in \fkS.
\end{equation}
In particular~$\clD(\bfA_\varSigma) = \clD_\clA^N$ is independent of the ensemble~$\varSigma\subset\fkS$.

Now, recalling the short notation in~\eqref{eq:def-Space_T}, it is known (see, e.g.,~\cite[Thm.~1.1, Part II, Ch.~2]{bensoussan2007representation}) that for $f \in V_T'$ and $x_\circ \in H$ there is a unique solution $x\in W_T(V,V')\hookrightarrow \clC([0,T];H)$ of $\dot{y} = \clA y + f$. From~$\clE g\in W_T(\clD_\clA^N,H^N)$ (as assumed in Sect.~\ref{sS:optim-ext}) we find
\begin{align}
f &= \clA_\sigma g - \dot{g} \in L^2(0,T;H)\quad\mbox{and}\quad
\bff = \bfA_\varSigma \clE g - \clE\dot{g} \in L^2(0,T;H^N).\label{eq:bff}
\end{align}
Recall  that (from, e.g.,~\cite[Thm.~1.1, Part II, Ch.~2]{bensoussan2007representation}), we have that
\begin{subequations}\label{eq:iso_dyn+ic}
\begin{equation}
\bfy \mapsto ( \dot{\bfy} - \bfA_\varSigma \bfy,\bfy(0)),\quad W_T(V^N,(V')^N) \to (V_T^N)' \times H^N
\end{equation}
is an isomorphism. Hence, there exists a constant $C_W>0$ such that
\begin{equation}
\|\bfy\|_{W_T(V^N,(V^N)')} \leq C_W \|(\dot{\bfy} - \bfA_\varSigma \bfy,\bfy(0))\|_{(V_T^N)'\times H^N}.
\end{equation}
\end{subequations}

\subsection{Comparing optimal costs}
Let us fix~$\sigma\in\fkS$ and consider the problem:
\begin{subequations}\label{eq:OP1sig}
\begin{align}
&\mbox{minimize }&&\!\clJ(\clE x_\sigma,u_\sigma) = \frac12 \int_0^T\! \left( \|Q x_\sigma(t)\|^2_{H} +  \|u_\sigma(t)\|^2_{ U} \right) \mathrm dt + \frac{1}{2} \|P x_\sigma(T)\|_{H}^2,\label{eq:smallsysobj}\\
&\mbox{subject to }&&
\dot{x}_\sigma(t) = \clA_\sigma x_\sigma(t) + B u_\sigma(t) + f(t),\qquad
x_\sigma(0) = x_{\circ}.\label{eq:smallsys}
\end{align}
\end{subequations}

In the following we will use the notation~$\bfA_\sigma$ to denote the operator defined as in~\eqref{eq:extA} with the same parameter~$\sigma_i=\sigma$ in each component.

\begin{lemma}\label{lem:objdiff1}
Given a parameter~$\sigma \in \fkS$, let~$(x_\sigma,u_\sigma)$ be the unique minimizer of problem~\eqref{eq:OP1sig} and let~$(\bfx_\varSigma, u_\varSigma)$ be the unique minimizer of problem~\eqref{eq:OPextx}.
Then, there holds
\begin{equation}\notag
\clJ(\bfx_\varSigma - \clE x_\sigma, u_\varSigma - u_\sigma) \le \mathfrak{C}_1 \|\bfA_\varSigma - \bfA_\sigma\|^2_{\clL(V^N,(V^N)')},
\end{equation}
where
\begin{equation}\label{eq:fkC}
\mathfrak{C}_1\coloneqq \frac{1}{2} C_W \left(1+\mathfrak{C}_{\bfp}+C_WC_H^2\|B\|_{\clL( U, H)}^2+2\mathfrak{C}_{\bfp}^2 C_W\right)
\|(x_{\sigma}+g, p_{\sigma})\|_{V_T\times V_T}^2
\end{equation}
with~$C_W$ as in~\eqref{eq:iso_dyn+ic}, $C_H\coloneqq \|\Id\|_{\clL(H^N,(V^N)')}$ and
\begin{equation}\notag
\mathfrak{C}_{\bfp} = \max\{1,C_V\|Q_\rme\|_{\clL(H^N,Y^N)},C_1\|P_\rme\|_{\clL(H^N,Z^N)}\},
\end{equation}
where~$C_1\coloneqq \|\Id\|_{\clL(W_T(V^N,(V^N)'),\clC([0,T],H^N)}$~and~$C_V\coloneqq \|\Id\|_{\clL(V^N,H^N)}$.
\end{lemma}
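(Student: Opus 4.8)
\emph{Proof plan.} The idea is to compare the two optimality systems and to read off the cost of the difference from an energy identity. First I would bring in the adjoint states. By the theorem recalled above, the extended minimizer $(\bfx_\varSigma,u_\varSigma)$ satisfies $u_\varSigma=-\bfB^\ast\bfp_\varSigma$ with costate $\bfp_\varSigma(t)=\bfPi_\varSigma(T-t)\bfx_\varSigma(t)+\bfh(t)$, which solves the adjoint equation $-\dot{\bfp}_\varSigma=\bfA_\varSigma^\ast\bfp_\varSigma+\tfrac1N Q_\rme^\ast Q_\rme\bfx_\varSigma$, $\bfp_\varSigma(T)=\tfrac1N P_\rme^\ast P_\rme\bfx_\varSigma(T)$. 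Denoting by $p_\sigma$ the adjoint state of the single problem~\eqref{eq:OP1sig} (so $u_\sigma=-B^\ast p_\sigma$) and setting $\bfq_\sigma:=\tfrac1N\clE p_\sigma$, the identities $\clE\clA_\sigma=\bfA_\sigma\clE$, $\bfB^\ast\clE=N B^\ast$ and $Q_\rme^\ast Q_\rme\clE=\clE Q^\ast Q$ show that $(\clE x_\sigma,\bfq_\sigma,u_\sigma)$ solves the very same optimality system, but with $\bfA_\sigma$ in place of $\bfA_\varSigma$.

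Subtracting, and using $\bff-(\bfA_\sigma\clE g-\clE\dot g)=(\bfA_\varSigma-\bfA_\sigma)\clE g$ from~\eqref{eq:y-to-x}, the differences $\bfw:=\bfx_\varSigma-\clE x_\sigma$, $\bfr:=\bfp_\varSigma-\bfq_\sigma$ and $v:=u_\varSigma-u_\sigma=-\bfB^\ast\bfr$ solve the forward--backward system
\begin{align}
\dot{\bfw} &= \bfA_\varSigma\bfw - \bfB\bfB^\ast\bfr + \bfg_1,&\bfw(0)&=0,\notag\\
-\dot{\bfr} &= \bfA_\varSigma^\ast\bfr + \tfrac1N Q_\rme^\ast Q_\rme\bfw + \bfg_2,&\bfr(T)&=\tfrac1N P_\rme^\ast P_\rme\bfw(T),\notag
\end{align}
with sources $\bfg_1:=(\bfA_\varSigma-\bfA_\sigma)\clE(x_\sigma+g)$ and $\bfg_2:=(\bfA_\varSigma-\bfA_\sigma)^\ast\bfq_\sigma$. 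Pairing the $\bfw$-equation with $\bfr$ and the $\bfr$-equation with $\bfw$, adding, and integrating by parts in the Gelfand triple $V^N\subset H^N\subset(V^N)'$, the terms $\langle\bfA_\varSigma\bfw,\bfr\rangle$ cancel and the boundary contribution equals $\tfrac1N\|P_\rme\bfw(T)\|^2$; recalling $\|\bfB^\ast\bfr\|_U=\|v\|_U$ this yields the energy identity
\begin{align}
2\clJ(\bfw,v) &=\int_0^T\langle\bfg_1,\bfr\rangle_{(V^N)',V^N}\,\rmd t-\int_0^T\langle\bfg_2,\bfw\rangle_{(V^N)',V^N}\,\rmd t.\notag
\end{align}

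It then remains to bound the right-hand side by a multiple of $d^2$, where $d:=\|\bfA_\varSigma-\bfA_\sigma\|_{\clL(V^N,(V^N)')}$. Cauchy--Schwarz together with $\|\clE(x_\sigma+g)\|_{V_T^N}=\sqrt N\,\|x_\sigma+g\|_{V_T}$ and $\|\bfq_\sigma\|_{V_T^N}=N^{-1/2}\|p_\sigma\|_{V_T}$ gives $2\clJ(\bfw,v)\le d\bigl(\sqrt N\,\|x_\sigma+g\|_{V_T}\,\|\bfr\|_{V_T^N}+N^{-1/2}\|p_\sigma\|_{V_T}\,\|\bfw\|_{V_T^N}\bigr)$. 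To close the estimate I would bound $\|\bfw\|_{W_T(V^N,(V^N)')}$ and $\|\bfr\|_{W_T(V^N,(V^N)')}$ by the isomorphism~\eqref{eq:iso_dyn+ic} applied to the $\bfw$-equation and to the time-reversed analogue for the $\bfr$-equation (valid with the same $C_W$, since $\bfA_\varSigma^\ast$ enjoys the same coercivity by Assumption~\ref{A:domindsig}). In those bounds the contributions $\bfB\bfB^\ast\bfr$, $\tfrac1N Q_\rme^\ast Q_\rme\bfw$, $\bfr(T)$ are themselves controlled by the cost, via $\|v\|_{U_T}\le\sqrt{2\clJ(\bfw,v)}$ and $\|Q_\rme\bfw\|_{Y^N_T},\|P_\rme\bfw(T)\|_{Z^N}\le\sqrt{2N\clJ(\bfw,v)}$, the $\sqrt N$ cancelling against the $\tfrac1N$ weights. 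Substituting back produces a self-improving inequality $2\clJ(\bfw,v)\le\alpha+\beta\sqrt{2\clJ(\bfw,v)}$ with $\alpha$ and $\beta^2$ both bounded by a constant times $d^2\|(x_\sigma+g,p_\sigma)\|_{V_T\times V_T}^2$; Young's inequality then absorbs the square-root term and gives $\clJ(\bfw,v)\le\mathfrak{C}_1 d^2$.

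Two points deserve care. The conceptual one is the rigour of the energy identity: one must secure $\bfw,\bfr\in W_T(V^N,(V^N)')$ so that the duality pairings, the formula $\int_0^T\tfrac{\rmd}{\rmd t}\langle\bfw,\bfr\rangle\,\rmd t=\langle\bfw(T),\bfr(T)\rangle$, and the symmetry $\langle\bfA_\varSigma\bfw,\bfr\rangle=\langle\bfw,\bfA_\varSigma^\ast\bfr\rangle$ are legitimate; this is exactly what the analytic-semigroup and Gelfand-triple setting of Section~\ref{S:optimCost} guarantees. The (routine but delicate) bookkeeping one is to recover the precise constant~\eqref{eq:fkC}: this needs the coincidence $C_H=C_V$ of the embedding norms of $V\hookrightarrow H$ and $H\hookrightarrow V'$, the gathering of the $Q_\rme,P_\rme$ factors into $\mathfrak{C}_{\bfp}$ and of the terminal embedding into $C_1$, the factor $\|B\|_{\clL(U,H)}$, and the complete cancellation of all $N$-factors.
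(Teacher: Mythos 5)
Your plan is correct and follows essentially the same route as the paper: write the two optimality systems, observe that $(\clE x_\sigma,\tfrac1N\clE p_\sigma,u_\sigma)$ solves the extended-type system with $\bfA_\sigma$ in place of $\bfA_\varSigma$, derive the difference system with sources $\delta\bfA\,\clE(x_\sigma+g)$ and $\delta\bfA^\ast\tfrac1N\clE p_\sigma$, obtain the energy identity $2\clJ(\delta\bfx,\delta u)=\langle\delta\bfA\,\clE(x_\sigma+g),\delta\bfp\rangle-\langle\delta\bfA^\ast\tfrac1N\clE p_\sigma,\delta\bfx\rangle$, bound $\|\delta\bfx\|_{V_T^N}$ and $\|\delta\bfp\|_{V_T^N}$ via the parabolic isomorphism~\eqref{eq:iso_dyn+ic}, and absorb the cost terms by Young's inequality. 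The only (immaterial) deviation is that you estimate $\|\delta\bfp\|_{V_T^N}$ by applying the isomorphism to the time-reversed adjoint equation, whereas the paper obtains the same bound through an explicit duality argument, testing $\delta\bfp$ against $\bfb=\dot{\bfy}-\bfA_\varSigma\bfy$ over the unit ball of $(V_T^N)'$.
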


\begin{proof}
 Given~$x_\circ \in H$,
the  optimality conditions for~\eqref{eq:OPextx}, with~$\bfx_\circ= \clE x_\circ$, are
\begin{subequations}\label{eq:optcondext}
\begin{align}
\dot{\bfx}_{\varSigma} &= \bfA_{\varSigma} \bfx_{\varSigma} + \bff+ \bfB u_\varSigma,\qquad && \bfx_{\varSigma}(0) = \clE x_\circ,\label{eq:optcondAext}\\
\dot{\bfp}_{\varSigma}&= -\bfA_{\varSigma}^\ast \bfp_{\varSigma} - \frac1N  Q_\rme^* Q_\rme\bfx_{\varSigma}\,\qquad&& \bfp_{\varSigma}(T) = \frac{1}{N} P_\rme^*P_\rme \bfx_{\varSigma}(T), \label{eq:optcondpext}\\
u_{\varSigma} &= -\bfB^\ast \bfp_{\varSigma},\label{eq:optcondEext}
\end{align}
\end{subequations}
and  for~\eqref{eq:OP1sig}, they are
\begin{align}
\dot{x}_{\sigma} &= \clA_{\sigma} x_{\sigma} + f + B u_\sigma,\qquad && x_{\sigma}(0) = x_{\circ},\notag\\
\dot{p}_{\sigma}&= -\clA_{\sigma}^\ast p_{\sigma} -  Q^*Q x_{\sigma},\qquad&& p_{\sigma}(T) =   P^*P x_\sigma(T), \notag\\
u_\sigma &= -B^\ast p_{\sigma}.\notag
\end{align}
The reader is reminded that the factor $\frac1N$, in \eqref{eq:optcondpext}, accounts for taking the sum over the ensemble~$\varSigma = \{\sigma_i \,|\, 1\le i \le N\}$; see~\eqref{eq:extobjx}.

Defining
\begin{equation}\notag
\delta\bfx \coloneqq \bfx_{\varSigma} - \clE x_{\sigma},\!\qquad \delta\bfp \coloneqq \bfp_{\varSigma} - \frac1 N \clE p_{\sigma},\!\qquad
 \delta u  \coloneqq u_{\varSigma} - u_{\sigma},\!\qquad\delta\bfA\coloneqq\bfA_{\varSigma} - \bfA_{\sigma},
\end{equation}
 we obtain
\begin{subequations}\label{eq:optcondiff}
\begin{align}
\dot{\delta\bfx} &= \bfA_{\varSigma} \delta\bfx + \delta\bfA \clE \left(x_{\sigma}+g\right) + \bfB \delta u, &&\qquad\delta\bfx(0) = 0,\label{eq:optcondiffA}\\
\dot{\delta\bfp} &= -\bfA^\ast_{\varSigma} \delta\bfp - \delta\bfA^\ast \frac1 N \clE p_{\sigma} - \frac1 N Q_\rme^*Q_\rme \delta \bfx, \label{eq:optcondiffC}, &&\qquad\delta\bfp(T) = \frac{1}{N} P_\rme^*P_\rme\delta \bfx(T),\\
\delta u &= -\bfB^\ast\delta\bfp.&&\label{eq:optcondiffE}
\end{align}
\end{subequations}
Moreover, we have~$\bfx_\varSigma \in W_T(V^N,(V^N)')$ and~$x_\sigma \in W_T(V,V')$, and due to~$\delta \bfx(0) \in V^N$, we obtain~$\delta \bfx \in W_T(\clD_\clA^N,H^N)\subset \clC([0,T];V^N)$, and thus~$\delta \bfp \in W_T(\clD_\clA^N,H^N)\subset \clC([0,T];V^N)$ (cf.~\cite[Thm.~1.4, Part II, Ch.~2]{bensoussan2007representation}).

Hence,  we find the identity
\begin{align}
-\langle \dot{\delta\bfp}, \delta\bfx \rangle_{H_T^N} &= \langle  \bfA_{\varSigma}^\ast \delta\bfp, \delta\bfx \rangle_{H_T^N}+\! \langle \delta\bfA^\ast \! \frac1N \clE p_{\sigma}, \delta\bfx \rangle_{(V_T^N)',V_T^N} + \!\frac1N \| Q_\rme \delta\bfx\|^2_{Y_T^N} \!\!\!\label{eq:diffA}
\intertext{and, using~$\langle\delta\bfp(0), \delta\bfx(0) \rangle_{H^N}=0$, we also have}
-\langle\delta\bfx ,\dot{\delta\bfp}\rangle_{H_T^N} &=\langle\dot{\delta\bfx},\delta\bfp\rangle_{H_T^N}-\frac{1}{N} \| P_\rme\delta\bfx(T)\|^2_{Z^N}\label{eq:diffB}\\
&= \langle \bfA_{\varSigma}\delta\bfx,\delta\bfp \rangle_{H_T^N}+ \langle \delta\bfA\clE (x_{\sigma}+g), \delta\bfp \rangle_{(V_T^N)',V_T^N}+ \langle \bfB\delta u,\delta\bfp \rangle_{H_T^N} \notag\\&\quad  - \frac{1}{N} \|P_\rme\delta\bfx(T)\|^2_{Z^N}\notag
\end{align}
Subtracting~\eqref{eq:diffA} from~\eqref{eq:diffB}, and using~\eqref{eq:optcondiffE}, lead us to
\begin{align}
\delta\clJ\coloneqq~&\frac1 N \|Q_\rme\delta\bfx\|^2_{Y_T^N} +   \| \delta u\|^2_{ U_T} + \frac{1}{N} \|P_\rme\delta\bfx(T)\|^2_{Z^N}\label{eq:deltaJ}\\
=~& -\langle \delta\bfA^\ast \frac1 N \clE p_{\sigma}, \delta\bfx\rangle_{(V_T^N)',V_T^N}+ \langle \delta\bfA\clE (x_{\sigma}+g), \delta\bfp \rangle_{(V_T^N)',V_T^N}\notag\\
\le~& \|\delta\bfA\|_{\clL(V^N,(V^N)')} \Bigl( \|\clE( x_{\sigma}+g)\|_{V_T^N}\,\|\delta\bfp\|_{V_T^N} + \frac1N \|\clE p_{\sigma}\|_{V_T^N}\,\|\delta\bfx\|_{V_T^N}\Bigr).\label{eq:deltabound}
\end{align}

Next, we use a duality argument to estimate the norm of $\delta \bfp$. Let us denote by~$\clB^{\clX}\coloneqq\{h\in \clX\mid \|h\|_{\clX}\le1\}$ the unit ball in a given Hilbert space~$\clX$. Let~$\bfb\in \clB^{(V_T)'}$ be arbitrary and let~$\bfy=\bfy(\bfb)$ be the solution of
\begin{align}
\dot{\bfy} &= \bfA_\varSigma \bfy  + \bfb,\qquad\bfy(0) = \boldsymbol{0},\notag
\end{align}
for time~$t\in(0,T)$. Then, we have
\begin{align}
&\|\delta \bfp\|_{V_T^N} = \sup_{\bfb\in\clB^{(V_T)'}} \langle \delta \bfp, \bfb\rangle_{V_T^N,(V_T^N)'}
=\sup_{\bfb\in\clB^{(V_T)'}} \langle \delta \bfp, \dot{\bfy} - \bfA_{\varSigma} \bfy  \rangle_{V_T^N,(V_T^N)'}\notag\\
&= \sup_{\bfb\in\clB^{(V_T)'}}\big( \langle -\dot{\delta \bfp} - \bfA_{\varSigma}^\ast \delta \bfp, \bfy \rangle_{(V_T^N)',V_T^N}+\langle \delta\bfp(T), \bfy(T) \rangle_{H^N}\big)\notag\\
&= \sup_{\bfb\in\clB^{(V_T)'}}\Big(\langle \delta\bfA^\ast \frac1N \clE p_{\sigma},  \bfy \rangle_{(V_T^N)',V_T^N} + \frac1 N \langle Q_\rme^* Q_\rme\delta \bfx, \bfy\rangle_{H_T^N}\notag \\\notag&\quad\quad\quad\quad+\langle \frac{1}{N}P_\rme^* P_\rme\delta\bfx(T), \bfy(T) \rangle_{H^N}\Big),\notag
\end{align}
where we used~\eqref{eq:optcondiffC}. Since~$W_T(V^N,(V^N)')\subset \clC([0,T];H^N)$ (see, e.g.,~\cite[Thm.~1, Ch.~XVIII]{dautray2000lions}), we have~$\|\bfy(T)\|_{H^N} \le C_1\|\bfy\|_{W_T(V^N,(V^N)')}$, for some constant~$C_1>0$. By noticing that~$\|v\|_{V_T} \le \|v\|_{W_T(V,V')}$ and recalling  the isomorphism in~\eqref{eq:iso_dyn+ic}, we obtain
\begin{align}
\|\delta \bfp\|_{V_T^N} &\le \sup_{\|\bfb\|_{(V_T^N)'}\le1} \mathfrak{C}_{\bfp}\|\bfy\|_{W_T(V^N,(V^N)')}\bigg( \frac1 N \|\delta\bfA\|_{\clL(V^N,(V^N)')}\, \|\clE p_{\sigma}\|_{V_T^N} \notag\\
&\quad\quad\quad\quad+ \frac1 N \|  Q_\rme\delta \bfx\|_{ Y_T^N} +\frac{1}{N}\| P_\rme\delta\bfx(T)\|_{ Z^N}\bigg)\notag\\
&\hspace{-1em}\le \frac1 N C_W\mathfrak{C}_{\bfp} \bigg(  \|\delta\bfA\|_{\clL(V^N,(V^N)')}\, \|\clE p_{\sigma}\|_{V_T^N} +\|Q_\rme\delta \bfx\|_{Y_T^N} +\|P_\rme\delta\bfx(T)\|_{Z^N}\bigg),
\label{eq:deltap}
\end{align}
where~$C_W$ is as in~\eqref{eq:iso_dyn+ic} and $\mathfrak{C}_{\bfp} = \max{(1,C_V\|Q_\rme\|_{\clL(H^N,Y^N)},C_1\|P_\rme\|_{\clL(H^N,Z^N)}})$ with~$C_V\coloneqq \|\Id\|_{\clL(V^N,H^N)}$. 

Using~\eqref{eq:optcondiffA}, we will next estimate the term $\|\delta\bfx\|_{V_T^N}$ in~\eqref{eq:deltabound}:
\begin{align}
\|\delta\bfx\|_{V_T^N} &\le \|\delta\bfx\|_{W(0,T;V^N,(V^N)')} \le C_W \|(\bfA_{\varSigma} - \bfA_{\sigma}) \clE \left(x_{\sigma}+g\right) + \bfB \delta u\|_{(V_T^N)'}\label{eq:deltaxV}\\
&\le  C_W \|\delta \bfA\|_{\clL(V^N,(V^N)')} \|\clE(x_\sigma + g)\|_{V_T^N} + C_WC_{H}\|\bfB\|_{\clL( U, H^N)} \|\delta u\|_{ U_T},\notag
\end{align}
with~$C_H\coloneqq\|\Id\|_{\clL(H^N,(V^N)')}$.
By combining~\eqref{eq:deltabound},~\eqref{eq:deltap}, and~\eqref{eq:deltaxV}, we find, with $C_{\delta \bfA}\coloneqq \|\delta\bfA\|_{\clL(V^N,(V^N)')}$,
\begin{align}
\delta\clJ
&\le C_{\delta \bfA}^2 \, \frac1 N \mathfrak{C}_{\bfp}C_W\,\|\clE (x_{\sigma}+g)\|_{V_T^N} \,\|\clE p_{\sigma}\|_{V_T^N}
+ C_{\delta \bfA}\frac1 N \|\clE p_{\sigma}\|_{V_T^N}\|\delta\bfx\|_{V_T^N}\notag\\
&\quad+ C_{\delta \bfA}\,\frac1 N\mathfrak{C}_{\bfp}\,C_W\|\clE( x_{\sigma}+g)\|_{V_T^N}\,\left(\| Q_\rme\delta \bfx\|_{Y_T^N} +\| P_\rme\delta\bfx(T)\|_{Z^N}\right),\notag
\end{align}
that is, after multiplication by~$N$ and using Young inequalities,
\begin{align}
N\delta\clJ&\le C_{\delta \bfA}^2  \mathfrak{C}_{\bfp}C_W\|\clE (x_{\sigma}+g)\|_{V_T^N} \|\clE p_{\sigma}\|_{V_T^N}
+ C_{\delta \bfA}^2C_W \|\clE p_{\sigma}\|_{V_T^N} \|\clE(x_\sigma + g)\|_{V_T^N} \notag\\
&\quad+ C_{\delta \bfA}C_WC_H\|\clE p_{\sigma}\|_{V_T^N}\|\bfB\|_{\clL( U, H^N)} \|\delta u\|_{ U_T}\notag\\
&\quad+ C_{\delta \bfA}\mathfrak{C}_{\bfp}\,C_W\|\clE( x_{\sigma}+g)\|_{V_T^N}\,\left(\| Q_\rme\delta \bfx\|_{Y_T^N} +\|P_\rme\delta\bfx(T)\|_{Z^N}\right)\notag\\
&\le C_{\delta \bfA}^2 \, C_W (\mathfrak{C}_{\bfp}+1)\,\|\clE (x_{\sigma}+g)\|_{V_T^N} \,\|\clE p_{\sigma}\|_{V_T^N}\notag\\
&\quad+ \frac{1}{2N}C_{\delta \bfA}^2C_W^2C_H^2\|\clE p_{\sigma}\|_{V_T^N}^2\|\bfB\|_{\clL( U, H^N)}^2 +C_{\delta \bfA}^2\mathfrak{C}_{\bfp}^2\,C_W^2\|\clE( x_{\sigma}+g)\|_{V_T^N}^2\notag\\
&\quad+ \frac{N}{2}\|\delta u\|_{U_T}^2 +\frac12\| Q_\rme\delta \bfx\|_{Y_T^N}^2 +\frac12\|P_\rme\delta\bfx(T)\|_{Z^N}^2\notag.
\end{align}
Recalling~\eqref{eq:deltaJ}, we find
\begin{align}
\frac{N}2\delta\clJ
&\le C_{\delta \bfA}^2 \, C_W (\mathfrak{C}_{\bfp}+1)\,\|\clE (x_{\sigma}+g)\|_{V_T^N} \,\|\clE p_{\sigma}\|_{V_T^N}\notag\\
&\quad+ \frac{1}{2N}C_{\delta \bfA}^2C_W^2C_H^2\|\clE p_{\sigma}\|_{V_T^N}^2\|\bfB\|_{\clL( U, H^N)}^2 +C_{\delta \bfA}^2\mathfrak{C}_{\bfp}^2\,C_W^2\|\clE( x_{\sigma}+g)\|_{V_T^N}^2,\notag
\end{align}
therefore, for~$\clJ(\delta\bfx,\delta u)=\frac12\delta\clJ$ we find
\begin{align}
\frac12\delta\clJ
&\le \frac{1}{2N}C_{\delta \bfA}^2 \, C_W (\mathfrak{C}_{\bfp}+1)\,\|(\clE (x_{\sigma}+g),\clE p_{\sigma})\|_{V_T^N\times V_T^N}^2\notag\\
&\quad+ \frac{1}{2N^2}C_{\delta \bfA}^2C_W^2C_H^2\|\bfB\|_{\clL( U, H^N)}^2\|\clE p_{\sigma}\|_{V_T^N}^2 +\frac{1}{N}C_{\delta \bfA}^2\mathfrak{C}_{\bfp}^2\,C_W^2\|\clE( x_{\sigma}+g)\|_{V_T^N}^2\notag\\
&= \frac{1}{2}C_{\delta \bfA}^2 \, C_W (\mathfrak{C}_{\bfp}+1)\,\|(x_{\sigma}+g, p_{\sigma})\|_{V_T\times V_T}^2\notag\\
&\quad+ \frac{1}{2N}C_{\delta \bfA}^2C_W^2C_H^2\|\bfB\|_{\clL( U, H^N)}^2\|p_{\sigma}\|_{V_T}^2 +C_{\delta \bfA}^2\mathfrak{C}_{\bfp}^2\,C_W^2\|x_{\sigma}+g\|_{V_T}^2\notag\\
&\le  \frac{1}{2}C_{\delta \bfA}^2  C_W \left(1+\mathfrak{C}_{\bfp}+C_WC_H^2\|B\|_{\clL( U, H)}^2+2\mathfrak{C}_{\bfp}^2 C_W\right)
\|(x_{\sigma}+g, p_{\sigma})\|_{V_T\times V_T}^2,\notag
\end{align}
which ends the proof.
\end{proof}

Next, we compare the value of the optimal costs associated with the ensemble optimal control problem~\eqref{eq:OPextx} and the single parameter optimal  control problem~\eqref{eq:OP1sig}.

\begin{corollary}\label{coro:big-small1}
Let $(\bfx_\varSigma,u_{\varSigma})$ be the minimizer of~\eqref{eq:OPextx} and let $(x_{\sigma},u_{\sigma})$ be the minimizer of~\eqref{eq:OP1sig} with~$\bfx_\varSigma(0)=\clE x_{\sigma}(0)$. Then, there holds
\begin{align}\notag
0\le \clJ(\clE x_{\sigma}, u_{\sigma})-\clJ(\bfx_\varSigma,u_{\varSigma})
\le \|\bfA_{\varSigma} - \bfA_{\sigma}\|_{\clL(V^N,(V^N)')}  \fkC_2 \sqrt{3\fkC_1},
\end{align}
with~$\fkC_1$ as in~\eqref{eq:fkC} and with
\begin{align}\label{eq:Cbig-small}
\fkC_2 &\coloneqq  \clI(\bfx_\varSigma + \clE x_{\sigma},u_{\varSigma} + u_{\sigma}),\\
\mbox{where}\quad
\clI(z,v)&\coloneqq
\frac{1}{\sqrt2}\| v\|_{ U_T} +\frac{1}{\sqrt{2 N}}\|Q_\rme z \|_{ Y_T^N}+ \frac{1}{\sqrt{2 N}} \| P_\rme z(T)\|_{Z^N}.\label{eq:clI}
\end{align}
\end{corollary}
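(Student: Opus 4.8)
The plan is to exploit that the extended cost $\clJ$ is a nonnegative quadratic functional of the pair $(\text{state},\text{control})$, so that $\clJ(\bfz,v)=\tfrac12\mathcal B\big((\bfz,v),(\bfz,v)\big)$ for the symmetric bilinear form
\[
\mathcal B\big((\bfz_1,v_1),(\bfz_2,v_2)\big)\coloneqq \langle v_1,v_2\rangle_{U_T}+\tfrac1N\langle Q_\rme\bfz_1,Q_\rme\bfz_2\rangle_{Y_T^N}+\tfrac1N\langle P_\rme\bfz_1(T),P_\rme\bfz_2(T)\rangle_{Z^N}.
\]
Abbreviating $a\coloneqq(\clE x_\sigma,u_\sigma)$ and $b\coloneqq(\bfx_\varSigma,u_\varSigma)$, polarization gives $\clJ(a)-\clJ(b)=\tfrac12\mathcal B(a+b,a-b)$. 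This is the key reduction: the difference slot $a-b=(\clE x_\sigma-\bfx_\varSigma,\,u_\sigma-u_\varSigma)$ carries exactly the quantity estimated by Lemma~\ref{lem:objdiff1}, since $\clJ$ is even and hence $\clJ(a-b)=\clJ(\bfx_\varSigma-\clE x_\sigma,u_\varSigma-u_\sigma)\le\fkC_1\|\bfA_\varSigma-\bfA_\sigma\|_{\clL(V^N,(V^N)')}^2$, while the sum slot $a+b=(\clE x_\sigma+\bfx_\varSigma,\,u_\sigma+u_\varSigma)$ is what produces $\fkC_2=\clI(a+b)$.

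For the right-hand inequality I would apply Cauchy--Schwarz separately in the three Hilbert spaces $U_T$, $Y_T^N$, $Z^N$ making up $\tfrac12\mathcal B(a+b,a-b)$, distributing the factors $\tfrac12,\tfrac1{2N}$ symmetrically. This bounds $\clJ(a)-\clJ(b)$ by $\sum_{k=1}^3\alpha_k\beta_k$, where the nonnegative numbers $\alpha_k$ sum to $\clI(a+b)=\fkC_2$ and the nonnegative numbers $\beta_k$ sum to $\clI(a-b)$, with $\clI$ as in~\eqref{eq:clI}. Because all the terms are nonnegative, the crude estimate $\sum_k\alpha_k\beta_k\le\big(\sum_k\alpha_k\big)\big(\sum_k\beta_k\big)=\fkC_2\,\clI(a-b)$ applies. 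It then remains to trade the $\ell^1$-type quantity $\clI(a-b)$ for the $\ell^2$-type quantity $\sqrt{\clJ(a-b)}$ via $\clI(\cdot)\le\sqrt3\,\sqrt{\clJ(\cdot)}$ (three-term Cauchy--Schwarz, using that $\clJ$ is precisely the sum of the squares of the three terms defining $\clI$), and to invoke Lemma~\ref{lem:objdiff1}, which yields $\clI(a-b)\le\sqrt{3\fkC_1}\,\|\bfA_\varSigma-\bfA_\sigma\|_{\clL(V^N,(V^N)')}$. Multiplying the two factors gives the stated bound.

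The main obstacle is the lower bound $\clJ(\clE x_\sigma,u_\sigma)\ge\clJ(\bfx_\varSigma,u_\varSigma)$, because the polarization estimate above controls only the magnitude $|\clJ(a)-\clJ(b)|$ and is blind to sign. I expect the nonnegativity to have to come from the minimality of $(\bfx_\varSigma,u_\varSigma)$ for~\eqref{eq:OPextx}: ideally one exhibits $(\clE x_\sigma,u_\sigma)$, or a pair with the same extended cost, as an admissible competitor, so that minimality forces $\clJ(\bfx_\varSigma,u_\varSigma)\le\clJ(\clE x_\sigma,u_\sigma)$ directly. The delicate point, which I anticipate to be the crux, is that $\clE x_\sigma$ solves the $\bfA_\sigma$-dynamics rather than the $\bfA_\varSigma$-dynamics of~\eqref{eq:extsysx}: its residual as a solution of the extended state equation is exactly $-(\bfA_\varSigma-\bfA_\sigma)\clE(x_\sigma+g)$, so $(\clE x_\sigma,u_\sigma)$ is not admissible and a naive minimality argument does not close. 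I would therefore instead work from the exact expansion $\clJ(a)-\clJ(b)=\mathcal B(b,a-b)+\clJ(a-b)$, in which $\clJ(a-b)\ge0$ is automatic, and try to pin down the sign of the cross term $\mathcal B(b,a-b)$ by using the extended optimality system~\eqref{eq:optcondext} to rewrite it as a pairing of the extended adjoint state $\bfp_\varSigma$ against that residual. Verifying that this cross term cannot render the difference negative is the step I would scrutinize most carefully.
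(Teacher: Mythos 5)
Your treatment of the upper bound coincides with the paper's proof: the paper likewise expands $\clJ(\clE x_\sigma,u_\sigma)-\clJ(\bfx_\varSigma,u_\varSigma)$ into the three polarized products, applies Cauchy--Schwarz termwise, bounds the resulting sum of products by $\clI(\bfx_\varSigma+\clE x_\sigma,u_\varSigma+u_\sigma)\,\clI(\bfx_\varSigma-\clE x_\sigma,u_\varSigma-u_\sigma)$, and then uses $\clI(\cdot)\le\sqrt{3\clJ(\cdot)}$ together with Lemma~\ref{lem:objdiff1}. Nothing to change there.

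For the sign, you have put your finger on a genuine gap --- but it is a gap in the paper's proof rather than one you failed to close. The paper's entire justification of $0\le\clJ(\clE x_\sigma,u_\sigma)-\clJ(\bfx_\varSigma,u_\varSigma)$ is precisely the naive minimality argument you reject: it treats $(\clE x_\sigma,u_\sigma)$ as a competitor for~\eqref{eq:OPextx}, and, as you note, this pair is not admissible, its residual in the extended state equation being $-(\bfA_\varSigma-\bfA_\sigma)\clE(x_\sigma+g)\neq0$. Your proposed repair will not rescue the sign either: carrying out the cross-term computation you sketch (integrating the residual against the extended adjoint $\bfp_\varSigma$ via~\eqref{eq:optcondext}) yields
\begin{equation*}
\clJ(\clE x_\sigma,u_\sigma)-\clJ(\bfx_\varSigma,u_\varSigma)=\clJ(\clE x_\sigma-\bfx_\varSigma,\,u_\sigma-u_\varSigma)-\int_0^T\bigl\langle \bfp_\varSigma,(\bfA_\varSigma-\bfA_\sigma)\clE(x_\sigma+g)\bigr\rangle\,\rmd t,
\end{equation*}
and the last pairing is sign-indefinite. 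In fact the lower bound cannot hold in general: take $N=1$ and $g=0$, so that the ``extended'' problem is itself a single-parameter LQ problem; the claim would then assert that the optimal value for the training parameter never exceeds that for the test parameter, and swapping the two parameters would force the optimal value to be independent of the parameter, which already fails for $\dot x=\sigma x+u$. What your argument (and the paper's) genuinely establishes is the two-sided estimate $\bigl|\clJ(\clE x_\sigma,u_\sigma)-\clJ(\bfx_\varSigma,u_\varSigma)\bigr|\le\fkC_2\sqrt{3\fkC_1}\,\|\bfA_\varSigma-\bfA_\sigma\|_{\clL(V^N,(V^N)')}$, and this is all that is used downstream (Corollary~\ref{coro:smallrob} only needs absolute values). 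So: do not spend further effort trying to verify the nonnegativity of the cross term --- your instinct to scrutinize that step was correct, and the honest conclusion is that the signed inequality should be weakened to the absolute-value form.
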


\begin{proof} The inequality~$0\le \clJ(\clE x_{\sigma}, u_{\sigma})-\clJ(\bfx_\varSigma,u_{\varSigma})$ holds because~$(\bfx_\varSigma,u_{\varSigma})$ minimizes~$\clJ$. To obtain the upper bound we estimate
\begin{align}
&\clJ_\delta\coloneqq\clJ(\clE x_{\sigma}, u_{\sigma})-\clJ(\bfx_\varSigma,u_{\varSigma})\notag\\
 &= \Bigl|\frac{1}{2} \langle u_{\varSigma} - u_{\sigma} , u_{\varSigma} + u_{\sigma}\rangle_{ U_T}+\frac{1}{2 N} \langle Q_\rme\bfx_\varSigma - Q_\rme\clE x_{\sigma},  Q_\rme\bfx_\varSigma + Q_\rme\clE x_{\sigma} \rangle_{ Y_T^N} \notag\\
&\quad + \frac{1}{2N}\langle P_\rme\bfx_\varSigma(T) - P_\rme\clE x_\sigma(T), P_\rme\bfx_\varSigma(T) + P_\rme\clE x_\sigma(T)\rangle_{Z^N}\Bigr|\notag \\
&\le \frac{1}{2} \| u_{\varSigma} - u_{\sigma} \|_{ U_T}\| u_{\varSigma} + u_{\sigma}\|_{ U_T}+\frac{1}{2 N} \| Q_\rme(\bfx_\varSigma - \clE x_{\sigma})\|_{ Y_T^N} \|Q_\rme(\bfx_{\varSigma} + \clE x_{\sigma}) \|_{ Y_T^N}\notag\\
&\quad + \frac{1}{2N} \| P_\rme(\bfx_\varSigma(T) - \clE x_\sigma(T))\|_{Z^N} \|P_\rme( \bfx_\varSigma(T) + \clE x_\sigma(T))\|_{Z^N}, \notag
\end{align}
hence, recalling~\eqref{eq:clI}, it follows that
\begin{align}
\clJ_\delta
&\le\clI(\bfx_\varSigma + \clE x_{\sigma},u_{\varSigma} + u_{\sigma})\, \clI(\bfx_\varSigma - \clE x_{\sigma},u_{\varSigma} - u_{\sigma})\notag\\
&\le \clI(\bfx_\varSigma + \clE x_{\sigma},u_{\varSigma} + u_{\sigma}) \sqrt{3\clJ(\bfx_\varSigma - \clE x_{\sigma},u_{\varSigma} - u_{\sigma})}.\notag
\end{align}
The claim follows from Lemma~\ref{lem:objdiff1}.
\end{proof}

\subsection{Cost of the proposed performant feedback control}

We compare the minimal cost associated with the minimizer~$(\bfx_\varSigma, u_\varSigma)$~of the extended system problem~\eqref{eq:OPextx} to the cost associated with the solution~$x_{\varSigma,\sigma}$ of~\eqref{eq:xsig-robFB} resulting from the feedback control $u_{\varSigma,\sigma}=  K_{\varSigma}(t,x_{\varSigma,\sigma}(t))$ given in \eqref{eq:robFB}.

The following result quantifies the difference of this feedback control associated to the single unknown parameter $\sigma$ compared to the optimal control associated to the ensemble of training parameters $\varSigma$ in the sense of the associated costs.
\begin{theorem}\label{thm:suboptimality} Assume that $x_0 \in V$,
let~$(\bfx_\varSigma, u_\varSigma)$ be the minimizer of~\eqref{eq:OPextx} with~$\bfx_\circ=\clE x_\circ$. Further, let~$\sigma \in \fkS$ and let~$x_{\varSigma,\sigma}$ be the solution of
\begin{align}\label{eq:cllsys}
 \dot{x}_{\varSigma,\sigma}(t) &=  \clA_\sigma  x_{\varSigma,\sigma}(t) + B  K_{\varSigma}(t,x_{\varSigma,\sigma}(t)) +  \clA_\sigma  g(t) - \dot{g}(t), \quad  x_{\varSigma,\sigma}(0)= x_\circ,
\end{align}
and let~$u_{\varSigma,\sigma}(t) \coloneqq   K_{\varSigma}(t,x_{\varSigma,\sigma}(t))$. Then, there holds
\begin{align}
&\hspace{-3em}0 \le \clJ(\clE x_{\varSigma,\sigma},u_{\varSigma,\sigma}) - \clJ(\bfx_\varSigma,u_\varSigma) \le \mathfrak{C}_{\rm subopt} \|\bfA_{\varSigma} - \bfA_{\sigma}\|_{\clL(\clD_\clA^N,H^N)},\notag\\
\mbox{with}\quad\mathfrak{C}_{\rm subopt}& \coloneqq \norm{\bfPi_\varSigma}{}\dnorm{X}{ H^N_T} (\dnorm{X}{(\clD_\clA^N)_T}+\dnorm{\clE g}{(\clD_\clA^N)_T}) \notag \\ &\quad+\dnorm{\bfh}{ H^N_T} (\dnorm{X}{(\clD_\clA^N)_T} + \|\clE g\|_{(\clD_\clA^N)_T}).
\label{eq:Csubopt}
\end{align}
\end{theorem}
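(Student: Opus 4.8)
The plan is to prove the two inequalities separately: the nonnegativity on the left by a feasibility/comparison argument resting on Corollary~\ref{coro:big-small1}, and the quantitative bound on the right by deriving an \emph{exact} identity for the cost gap via a completion-of-squares computation with the Riccati pair $(\bfPi_\varSigma,\bfh)$.

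For the lower bound, the first observation is that, since $\clE$ replicates a single state into all $N$ components, $\clJ(\clE x_{\varSigma,\sigma},u_{\varSigma,\sigma})$ collapses to the single-parameter objective in~\eqref{eq:smallsysobj} evaluated along $(x_{\varSigma,\sigma},u_{\varSigma,\sigma})$; indeed $\|Q_\rme\clE v\|_{Y^N}^2=N\|Qv\|_Y^2$ and likewise for $P_\rme$. As the closed-loop pair $(x_{\varSigma,\sigma},u_{\varSigma,\sigma})$ solves~\eqref{eq:cllsys}, it is admissible for problem~\eqref{eq:OP1sig}, so its cost dominates the minimal cost $\clJ(\clE x_\sigma,u_\sigma)$ of~\eqref{eq:OP1sig}. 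Combining this with the lower bound of Corollary~\ref{coro:big-small1}, namely $\clJ(\bfx_\varSigma,u_\varSigma)\le\clJ(\clE x_\sigma,u_\sigma)$, yields $\clJ(\bfx_\varSigma,u_\varSigma)\le\clJ(\clE x_\sigma,u_\sigma)\le\clJ(\clE x_{\varSigma,\sigma},u_{\varSigma,\sigma})$, which is the asserted nonnegativity.

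For the upper bound, I would first record that the replicated trajectory $X\coloneqq\clE x_{\varSigma,\sigma}$ solves the extended state equation \emph{with the correct generator} $\bfA_\varSigma$ up to an explicit disturbance: using $\bfA_\sigma\clE=\clE\clA_\sigma$, $\clE B=\bfB$, and $\bff=\bfA_\varSigma\clE g-\clE\dot g$, equation~\eqref{eq:cllsys} gives
\begin{equation}\notag
\dot X=\bfA_\varSigma X+\bfB u_{\varSigma,\sigma}+\bff+\bfw,\qquad \bfw\coloneqq(\bfA_\sigma-\bfA_\varSigma)\clE(x_{\varSigma,\sigma}+g),\qquad X(0)=\clE x_\circ,
\end{equation}
while $u_{\varSigma,\sigma}=-\bfB^\ast(\bfPi_\varSigma(T-t)X+\bfh)$ is exactly the optimal feedback of the extended problem evaluated along $X$. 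Differentiating $t\mapsto\tfrac12\langle\bfPi_\varSigma(T-t)X(t),X(t)\rangle_{H^N}+\langle\bfh(t),X(t)\rangle_{H^N}$, inserting the Riccati equation~\eqref{eq:extRiccati} together with the equation~\eqref{eq:h} for $\bfh$, and integrating over $[0,T]$ (using $\bfPi_\varSigma(0)=\tfrac1N P_\rme^\ast P_\rme$ and $\bfh(T)=\mathbf0$) produces, after cancellation of all quadratic feedback terms against $\tfrac12\|u_{\varSigma,\sigma}\|_U^2$, the identity
\begin{equation}\notag
\clJ(\clE x_{\varSigma,\sigma},u_{\varSigma,\sigma})-\clJ(\bfx_\varSigma,u_\varSigma)=\int_0^T\langle\bfPi_\varSigma(T-t)X(t)+\bfh(t),\bfw(t)\rangle_{H^N}\,\rmd t;
\end{equation}
for $\bfw=\mathbf0$ this reproduces the optimal-cost formula~\eqref{eq:mincost} (and the constant $\Phi(0)$ and the $\bfh$-integral are trajectory-independent since both trajectories start at $\clE x_\circ$), which is a useful consistency check. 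Bounding the right-hand side by Cauchy--Schwarz in time, estimating $\|\bfw(t)\|_{H^N}\le\|\bfA_\varSigma-\bfA_\sigma\|_{\clL(\clD_\clA^N,H^N)}\|\clE(x_{\varSigma,\sigma}+g)(t)\|_{\clD_\clA^N}$, and using the triangle inequalities $\|\bfPi_\varSigma(T-t)X+\bfh\|_{H^N}\le\norm{\bfPi_\varSigma}{}\|X\|_{H^N}+\|\bfh\|_{H^N}$ and $\|\clE(x_{\varSigma,\sigma}+g)\|_{\clD_\clA^N}\le\|X\|_{\clD_\clA^N}+\|\clE g\|_{\clD_\clA^N}$, then gives exactly $\mathfrak{C}_{\rm subopt}\|\bfA_\varSigma-\bfA_\sigma\|_{\clL(\clD_\clA^N,H^N)}$ with $\mathfrak{C}_{\rm subopt}$ as in~\eqref{eq:Csubopt}.

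The main obstacle is regularity, and it is what the hypothesis $x_\circ\in V$ is for. To make sense of $\bfw$ as an $L^2(0,T;H^N)$ function, and to legitimately differentiate the Riccati quadratic form along $X$ (the weak form of~\eqref{eq:extRiccati} is only available for arguments in $\clD(\bfA_\varSigma)=\clD_\clA^N$), I must first show $x_{\varSigma,\sigma}\in L^2(0,T;\clD_\clA)$, equivalently $X\in W_T(\clD_\clA^N,H^N)$. This follows from maximal parabolic regularity for the analytic generators $\clA_\sigma$: in~\eqref{eq:cllsys} the feedback contributes a linear term $-B\bfB^\ast\bfPi_\varSigma(T-\cdot)\clE x_{\varSigma,\sigma}$ that is bounded on $H$ plus an $L^2(0,T;U)$ affine part, while $\clA_\sigma g-\dot g\in L^2(0,T;H)$ by the standing hypotheses on $g$; with $x_\circ\in V$ the trace space is matched and the solution gains the full $\clD_\clA$-regularity. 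Granting this, the differentiation identity can be justified through the weak formulation accompanying~\eqref{eq:extRiccati} (or by approximating $X$ by smooth $\clD_\clA^N$-valued trajectories), and the remaining steps are the routine estimates indicated above.
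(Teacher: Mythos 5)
Your proof is correct and follows essentially the same route as the paper: both rest on maximal regularity (from $x_\circ\in V$) to place $\clE x_{\varSigma,\sigma}$ and $\bfh$ in $W_T(\clD_\clA^N,H^N)$, and both obtain the cost gap by combining the optimal-cost formula~\eqref{eq:mincost} with a completion-of-squares computation along $X=\clE x_{\varSigma,\sigma}$ using the Riccati pair $(\bfPi_\varSigma,\bfh)$, ending with the same residual integral $\int_0^T\langle \bfPi_\varSigma(T-t)X+\bfh,\bfw\rangle_{H^N}\,\rmd t$ (your version, with the derivative of a single functional $\Phi$, is a tidier packaging of the paper's term-by-term expansion $\fkT_1,\dots,\fkT_5$, and your sign on the $\langle\bfh,\delta\bfA\,\clE g\rangle$ contribution is the consistent one). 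A small bonus of your write-up is that you actually justify the left inequality $0\le\delta\clJ$ via admissibility for~\eqref{eq:OP1sig} together with Corollary~\ref{coro:big-small1}, a point the paper's proof leaves implicit.
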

\begin{proof} 
We commence by commenting on the  regularity of $\clE x_{\varSigma,\sigma}$ and $\bfh$ which will be used throughout the proof. Due to~\eqref{eq:DAsame}, we have that $\clD(\clA_\sigma^{\frac{1}{2}})=V$, for all~$\sigma\in\fkS$ (see, e.g.,~\cite[p.~183]{bensoussan2007representation}). Consequently, it follows by maximal regularity theory that $\bfh\in W_T(\mathcal{D}_\clA^N,H^N)$; see~\eqref{eq:def-Space_T} and~\cite[p.~187]{bensoussan2007representation}. Further, since $x_\circ \in V$, we have  $\clE x_{\varSigma,\sigma}\in W_T(\mathcal{D}_\clA^N,H^N)$ as well.

Next, recalling~\eqref{eq:mincost}, the minimal cost is given by
\begin{align}
\clJ(\bfx_\varSigma, u_\varSigma) &= \frac{1}{2} \langle \bfPi_\varSigma(T) \bfx_\circ,\bfx_\circ\rangle_{ H^N} + \langle \bfh(0), \bfx_\circ\rangle_{ H^N}\notag \\
&\quad+ \int_0^T \left(  \langle \bfh(t), \bff(t) \rangle_{ H^N} - \frac{1}{2}\|\bfB^\ast \bfh(t)\|_{U}^2\right) \mathrm dt.\label{eq:clJnfxSigma}
\end{align}
We observe that
\begin{align}
\langle \bfx_\circ, \bfPi_\varSigma(T) \bfx_\circ \rangle_{ H^N} =& \langle \clE x_{\varSigma,\sigma}(T), \bfPi_\varSigma(0) \clE x_{\varSigma,\sigma}(T) \rangle_{ H^N}-\fkT_1=\frac{1}{N}\dnorm{P_\rme \clE x_{\varSigma,\sigma}(T)}{Z^N}^2-\fkT_1\notag\\
\mbox{ with}\quad\fkT_1\coloneqq&  \int_0^T \frac{\mathrm d}{\mathrm dt} \langle \clE x_{\varSigma,\sigma}(t), \bfPi_{\varSigma}(T-t) \clE x_{\varSigma,\sigma}(t) \rangle_{ H^N} \mathrm dt\label{eq:cost-fkT1}
\end{align}
and,  denoting~$\underline\bfPi_\varSigma(t)\coloneqq\bfPi_\varSigma(T-t)$, we find
\begin{subequations}\label{eq:fkT1-1}
\begin{align}
\fkT_1 &=\fkT_{1,1}+\fkT_{1,2}+\fkT_{1,3},\\
\mbox{with}\quad\fkT_{1,1}&\coloneqq\int_0^T\langle \clE \dot x_{\varSigma,\sigma}(t), \underline\bfPi_\varSigma(t) \clE x_{\varSigma,\sigma}(t) \rangle_{ H^N} \mathrm dt \\
\fkT_{1,2}&\coloneqq\int_0^T\langle \clE x_{\varSigma,\sigma}(t), \underline\bfPi_\varSigma(t) \clE  \dot x_{\varSigma,\sigma}(t) \rangle_{ H^N} \mathrm dt \\
\fkT_{1,3}&\coloneqq\int_0^T\langle \clE x_{\varSigma,\sigma}(t), \underline{\dot\bfPi}_\varSigma(t) \clE x_{\varSigma,\sigma}(t) \rangle_{ H^N} \mathrm dt.
\end{align}
\end{subequations}
Using~$\clE \dot x_{\varSigma,\sigma}=\bfA_\sigma x_{\varSigma,\sigma} - \bfB\bfB^* (\underline\bfPi_\varSigma\clE x_{\varSigma,\sigma}+\bfh) + \bff_\sigma$, where we have abbreviated
\begin{equation}\label{eq:bffsig}
\bff_\sigma(t) \coloneqq \bfA_\sigma \clE g(t) - \clE \dot{g}(t),
\end{equation}
we obtain
\begin{subequations}\label{eq:fkT23}
\begin{align}
&\fkT_{1,1}+\fkT_{1,2}=\fkT_2+\fkT_3,\\
&\mbox{with}\quad\fkT_2\coloneqq \int_0^T \langle (\bfA_\sigma - \bfB\bfB^* \underline\bfPi_\varSigma)\clE  x_{\varSigma,\sigma}(t) ,\underline\bfPi_\varSigma(t)  \clE  x_{\varSigma,\sigma}(t)   \rangle_{ H^N}\mathrm dt \notag\\
&\hspace{5em}+\int_0^T\langle\clE  x_{\varSigma,\sigma}(t) , \underline\bfPi_\varSigma(t)(\bfA_\sigma
-\bfB\bfB^* \underline\bfPi_\varSigma(t) )  \clE  x_{\varSigma,\sigma}(t)   \rangle_{ H^N}\mathrm dt \\
&\mbox{and}\quad\fkT_3\coloneqq\int_0^T \langle \bff_\sigma(t)-\bfB\bfB^*\bfh(t) ,\underline\bfPi_\varSigma(t)  \clE  x_{\varSigma,\sigma}(t)   \rangle_{ H^N}\mathrm dt \notag\\
&\quad\hspace{5em}+\int_0^T\langle\clE  x_{\varSigma,\sigma}(t) , \underline\bfPi_\varSigma(t)(\bff_\sigma(t)-\bfB\bfB^*\bfh(t)) \rangle_{ H^N}\mathrm dt
\end{align}
\end{subequations}

With~$X\coloneqq \clE x_{\varSigma,\sigma}$ and~$\delta\bfA= \bfA_\varSigma- \bfA_\sigma$, we find
\begin{align}
\fkT_2&=  -\int_0^T \langle X(t) ,(\delta\bfA^*\underline\bfPi_\varSigma+\underline\bfPi_\varSigma \delta\bfA)  X(t)   \rangle_{ H^N}\mathrm dt\notag\\
&\quad+\int_0^T \langle X(t) ,(\bfA_\varSigma^*\underline\bfPi_\varSigma+\underline\bfPi_\varSigma \bfA_\varSigma-2\underline\bfPi_\varSigma\bfB\bfB^*\underline\bfPi_\varSigma)  X(t)   \rangle_{ H^N}\mathrm dt \notag
\end{align}
and, recalling~\eqref{eq:extRiccati},
\begin{align}
\fkT_2&= -\int_0^T ( \langle X(t) ,(\delta\bfA^*\underline\bfPi_\varSigma+\underline\bfPi_\varSigma \delta\bfA)  X(t)   \rangle_{ H^N} -\langle X(t) , \underline{\dot\bfPi}_\varSigma(t) X(t)  \rangle_{ H^N} ) \mathrm dt\notag\\
&\quad-\int_0^T \langle X(t) ,(\underline\bfPi_\varSigma\bfB\bfB^*\underline\bfPi_\varSigma+\frac1N Q_\rme^*Q_\rme)  X(t)   \rangle_{ H^N}\mathrm dt \notag
\end{align}
and, from~\eqref{eq:fkT1-1} and~\eqref{eq:fkT23},
\begin{align}
\fkT_1 =\fkT_{3}+ \fkT_{2}+\fkT_{1,3} =\fkT_3&-\int_0^T \langle X(t) ,(\delta\bfA^*\underline\bfPi_\varSigma+\underline\bfPi_\varSigma \delta\bfA)  X(t)   \rangle_{ H^N}\mathrm dt\notag\\
&-2\clJ(X,v_X)+\frac1N \dnorm{P_\rme  X(T)}{Z^N}^2\notag.
\end{align}
with~$v_X\coloneqq-\bfB^*\underline\bfPi_\varSigma X(t)$. Next, recalling~\eqref{eq:clJnfxSigma} and~\eqref{eq:cost-fkT1},
\begin{align}
2\clJ(\bfx_\varSigma, u_\varSigma) &=\frac{1}{N}\dnorm{P_\rme X(T)}{Z^N}^2-\fkT_1 \notag \\
&\quad+ 2\langle \bfh(0), \bfx_\circ\rangle_{ H^N}+ \int_0^T \left(2  \langle \bfh(t), \bff(t) \rangle_{ H^N} - \|\bfB^\ast \bfh(t)\|_{U}^2\right) \mathrm dt\notag\\
&=-\fkT_{3}+\int_0^T \langle X(t) ,(\delta\bfA^*\underline\bfPi_\varSigma+\underline\bfPi_\varSigma \delta\bfA)  X(t)   \rangle_{ H^N}\mathrm dt+2\clJ(X,v_X)\notag \\
&\quad+ 2\langle \bfh(0), \bfx_\circ\rangle_{ H^N}+ \int_0^T \left(2  \langle \bfh(t), \bff(t) \rangle_{ H^N} - \|\bfB^\ast \bfh(t)\|_{U}^2\right) \mathrm dt.\notag
\end{align}

Hence, with~$u_{\varSigma,\sigma}=u_X\coloneqq -\bfB^*(\underline\bfPi_\varSigma X+\bfh)=v_X-\bfB^*\bfh$, we use
\begin{align}
\dnorm{u_X}{U}^2=\dnorm{v_X}{U}^2-2(v_X,\bfB^*\bfh)_U+\dnorm{\bfB^*\bfh}{U}^2\notag
\end{align}
to obtain
\begin{align}
2\delta\clJ_{\varSigma,\sigma;\varSigma}&\coloneqq2\clJ( \clE x_{\varSigma,\sigma},u_{\varSigma,\sigma})-2\clJ(\bfx_\varSigma, u_\varSigma) =2\clJ(X,u_X)-2\clJ(\bfx_\varSigma, u_\varSigma) \notag\\
&=\fkT_{3}-\int_0^T \langle X(t) ,(\delta\bfA^*\underline\bfPi_\varSigma+\underline\bfPi_\varSigma \delta\bfA)  X(t)   \rangle_{ H^N}\mathrm dt - 2\langle \bfh(0), \bfx_\circ\rangle_{ H^N}\notag\\
&\quad-2\int_0^T \left( \langle \bfh(s), \bff(s) \rangle_{ H^N}+(v_X,\bfB^*\bfh)_ U-\dnorm{\bfB^*\bfh(t)}{ U}^2\right) \mathrm ds.\notag
\end{align}

For~$\fkT_3$ as in~\eqref{eq:fkT23}, we find, using the symmetry of~$\underline\bfPi_\varSigma$,
\begin{align}
\quad\fkT_3&=2\int_0^T \langle \bff_\sigma(t),\underline\bfPi_\varSigma(t) X(t)   \rangle_{ H^N}
\mathrm dt+2\int_0^T \langle \bfB^*\bfh(t),v_X(t)   \rangle_{ H^N}
\mathrm dt \notag
\end{align}
which leads to
\begin{align}
2\delta\clJ_{\varSigma,\sigma;\varSigma}&=-\int_0^T \langle X(t) ,(\delta\bfA^*\underline\bfPi_\varSigma+\underline\bfPi_\varSigma \delta\bfA)  X(t)   \rangle_{ H^N}\mathrm dt- 2\langle \bfh(0), \bfx_\circ\rangle_{ H^N}\label{eq:difJSigsig-Sig.1}\\
&\quad+ 2\int_0^T \left(\langle \bff_\sigma(t),\underline\bfPi_\varSigma(t) X(t)   \rangle_{ H^N}- \langle \bfh(t), \bff(t) \rangle_{ H^N}+\dnorm{\bfB^*\bfh(t)}{ U}^2\right) \mathrm dt.\notag
\end{align}

Recalling system~\eqref{eq:h}, satisfied by~$\bfh$, we find
\begin{subequations}\label{eq:h0X0}
\begin{align}
&- \langle \bfh(0), \bfx_\circ \rangle_{ H^N}=\int_0^T\frac{\rmd}{\rmd t} \langle \bfh(t) ,X(t)\rangle_{ H^N} \mathrm dt=\fkT_4+\fkT_5\\
\mbox{with}&\quad\fkT_4\coloneqq\int_0^T \langle \dot \bfh(t) ,X(t)\rangle_{ H^N}\mathrm dt\quad\mbox{and}\quad\fkT_5\coloneqq\int_0^T\langle \bfh(t) ,\dot X(t)\rangle_{ H^N} \mathrm dt.
\end{align}
\end{subequations}

We observe that
\begin{align}
\fkT_4&=-\int_0^T \langle \left(\bfA_\varSigma^\ast - \underline\bfPi_\varSigma(t)\bfB\bfB^\ast\right) \bfh(t) + \underline \bfPi_\varSigma(t)\bff(t) ,X(t)\rangle_{ H^N}\mathrm dt,\notag\\
\fkT_5&=\int_0^T\langle \bfh(t) ,(\bfA_\sigma  - \bfB\bfB^* (\underline\bfPi_\varSigma (t)X(t)+\bfh)) + \bff_\sigma\rangle_{ H^N} \mathrm dt\notag\\
&=\int_0^T\langle (\bfA_\sigma^\ast  - \underline\bfPi_\varSigma (t)\bfB\bfB^*)\bfh (t) ,X(t)\rangle_{ H^N} +\langle\bfh(t), \bff_\sigma(t)\rangle_{ H^N}-\dnorm{\bfB^*\bfh (t)}{U}^2 \mathrm dt,\notag
\end{align}
and
\begin{align}
\fkT_4+\fkT_5&=-\int_0^T\left(\langle \delta\bfA^\ast \bfh (t) ,X(t)\rangle_{ H^N} \mathrm dt-\langle\bfh(t), \bff_\sigma(t)\rangle_{ H^N}+\dnorm{\bfB^*\bfh (t)}{U}^2\right) \mathrm dt \notag\\
&\quad-\int_0^T\langle  \underline \bfPi_\varSigma(t)\bff(t) ,X(t)\rangle_{ H^N}\mathrm dt.\notag
\end{align}
This relation combined with~\eqref{eq:difJSigsig-Sig.1} and~\eqref{eq:h0X0} gives, with~$\delta\bff=\bff-\bff_\sigma$,
\begin{align}
2\delta\clJ_{\varSigma,\sigma;\varSigma}&=-\int_0^T \langle X(t) ,(\delta\bfA^*\underline\bfPi_\varSigma+\underline\bfPi_\varSigma \delta\bfA)  X(t)   \rangle_{ H^N}\mathrm dt + 2(\fkT_4+\fkT_5)\notag \\&\quad+ 2\int_0^T \left(\langle \bff_\sigma(t),\underline\bfPi_\varSigma(t) X(t)   \rangle_{ H^N}- \langle \bfh(t), \bff(t) \rangle_{ H^N} + \|\bfB^\ast \bfh(t)\|_U^2\right) \mathrm dt
\notag\\
&=-\int_0^T\! \langle X(t) ,(\delta\bfA^*\underline\bfPi_\varSigma+\underline\bfPi_\varSigma \delta\bfA)  X(t)   \rangle_{ H^N}+2\langle \delta\bfA^\ast \bfh (t) ,X(t)\rangle_{ H^N} \mathrm dt\notag\\
&\quad-  2\int_0^T \left(\delta\bff(t),\underline\bfPi_\varSigma(t) X(t)   \rangle_{ H^N}+ \langle \bfh(t), \delta\bff(t) \rangle_{ H^N}\right) \mathrm dt
\notag
\end{align}

From~\eqref{eq:bff}  and~\eqref{eq:bffsig},
we have that~$\delta\bff= \delta\bfA_\sigma \clE g(t)$, hence
\begin{align}
2\delta\clJ_{\varSigma,\sigma;\varSigma}&=-2\int_0^T \langle X(t) ,\underline\bfPi_\varSigma \delta\bfA   X(t)   \rangle_{ H^N}+2\langle  \bfh (t) ,\delta\bfA X(t)\rangle_{ H^N} \mathrm dt\notag\\
&\quad-  2\int_0^T\langle\delta\bfA \clE g(t),\underline\bfPi_\varSigma(t) X(t)   -\bfh(t)\rangle_{ H^N} \mathrm dt.\notag
\end{align}
Finally, denoting~$\norm{\bfPi_\varSigma}{}\coloneqq\dnorm{\bfPi_\varSigma}{L^\infty(0,T;\clL( H^N))}$ and by recalling the notation for~$\clX_T$ in~\eqref{eq:def-Space_T}, we obtain the estimate
\begin{align}
\delta\clJ_{\varSigma,\sigma;\varSigma}&\le \mathfrak{C}_{\rm subopt} \dnorm{\delta\bfA}{\clL(\clD_\clA^N, H^N)}\notag
\end{align}
with~$\mathfrak{C}_{\rm subopt}\coloneqq \norm{\bfPi_\varSigma}{}\dnorm{X}{ H^N_T} (\dnorm{X}{(\clD_\clA^N)_T} + \dnorm{\clE g}{(\clD_\clA^N)_T})+\dnorm{\bfh}{ H^N_T} (\dnorm{X}{(\clD_\clA^N)_T} + \|\clE g\|_{(\clD_\clA^N)_T})$.
\end{proof}

The following result is an immediate consequence of Corollary~\ref{coro:big-small1} and Theorem~\ref{thm:suboptimality}.
\begin{corollary}\label{coro:smallrob}
Given a parameter $\sigma \in \fkS$, let $(x_\sigma,u_\sigma)$ be the minimizer of ~\eqref{eq:OP1sig} and let~$\clE x_{\varSigma,\sigma}$ the solution of~\eqref{eq:cllsys} with $u_{\varSigma,\sigma}(t) =  K_{\varSigma}(t,x_{\varSigma,\sigma}(t))$. Then, there holds
\begin{align}
&|\clJ(\clE x_\sigma , u_\sigma) - \clJ (\clE x_{\varSigma,\sigma}, u_{\varSigma,\sigma} )|\le \fkC_{2}\sqrt{3\mathfrak{C}_{1}} \|\delta\bfA\|_{\clL(V^N,(V^N)')} + \mathfrak{C}_{\rm subopt} \|\delta\bfA\|_{\clL(\clD_\clA^N,H^N)},\notag
\end{align}
with~$\delta\bfA=\bfA_{\varSigma} - \bfA_\sigma$, and with the constants $\fkC_{1}$, $\fkC_2$,  $\fkC_{\rm subopt}$, as in~\eqref{eq:fkC}, \eqref{eq:Cbig-small}, \eqref{eq:Csubopt}.
\end{corollary}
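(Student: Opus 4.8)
The plan is to compare both costs against the single common reference value $\clJ(\bfx_\varSigma, u_\varSigma)$, the optimal cost of the extended problem~\eqref{eq:OPextx}, which is precisely the quantity appearing as the lower reference in both Corollary~\ref{coro:big-small1} and Theorem~\ref{thm:suboptimality}. First I would insert this reference value via the triangle inequality,
\begin{equation*}
\begin{split}
|\clJ(\clE x_\sigma, u_\sigma) - \clJ(\clE x_{\varSigma,\sigma}, u_{\varSigma,\sigma})|
&\le |\clJ(\clE x_\sigma, u_\sigma) - \clJ(\bfx_\varSigma, u_\varSigma)| \\
&\quad + |\clJ(\bfx_\varSigma, u_\varSigma) - \clJ(\clE x_{\varSigma,\sigma}, u_{\varSigma,\sigma})|,
\end{split}
\end{equation*}
so that the task reduces to controlling each of the two differences on the right-hand side, each of which has already been analyzed.

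Next I would bound the two terms separately. Corollary~\ref{coro:big-small1} furnishes $0 \le \clJ(\clE x_\sigma, u_\sigma) - \clJ(\bfx_\varSigma, u_\varSigma) \le \fkC_2\sqrt{3\mathfrak{C}_1}\,\|\delta\bfA\|_{\clL(V^N,(V^N)')}$, and since this difference is nonnegative the first absolute value equals the difference itself and is thus dominated by $\fkC_2\sqrt{3\mathfrak{C}_1}\,\|\delta\bfA\|_{\clL(V^N,(V^N)')}$. Likewise, Theorem~\ref{thm:suboptimality} gives $0 \le \clJ(\clE x_{\varSigma,\sigma}, u_{\varSigma,\sigma}) - \clJ(\bfx_\varSigma, u_\varSigma) \le \mathfrak{C}_{\rm subopt}\,\|\delta\bfA\|_{\clL(\clD_\clA^N, H^N)}$, and the same nonnegativity lets me identify the second absolute value with this difference and bound it by $\mathfrak{C}_{\rm subopt}\,\|\delta\bfA\|_{\clL(\clD_\clA^N, H^N)}$. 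Here it is essential that both estimates are phrased relative to the identical minimizer $(\bfx_\varSigma, u_\varSigma)$ of~\eqref{eq:OPextx} and the identical closed-loop pair $(\clE x_{\varSigma,\sigma}, u_{\varSigma,\sigma})$ generated by the feedback $K_\varSigma$ in~\eqref{eq:cllsys}, which is the case.

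Summing the two bounds yields exactly the asserted inequality with $\delta\bfA = \bfA_\varSigma - \bfA_\sigma$. I do not anticipate any genuine obstacle: all the analytic work — the duality and energy estimates underlying Lemma~\ref{lem:objdiff1} and Corollary~\ref{coro:big-small1}, and the Riccati-based cost identity underlying Theorem~\ref{thm:suboptimality} — is already in place, so the argument is purely a bookkeeping combination through the triangle inequality. The only bookkeeping I would be careful about is that Theorem~\ref{thm:suboptimality} requires $x_\circ \in V$ (ensuring $\clE x_{\varSigma,\sigma} \in W_T(\clD_\clA^N, H^N)$ and hence finiteness of $\mathfrak{C}_{\rm subopt}$), so this regularity hypothesis must be understood to be in force in the statement of the corollary.
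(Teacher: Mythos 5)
Your proposal is correct and coincides with the paper's argument: the paper states Corollary~\ref{coro:smallrob} as an immediate consequence of Corollary~\ref{coro:big-small1} and Theorem~\ref{thm:suboptimality}, which is exactly your triangle-inequality decomposition through the common reference value $\clJ(\bfx_\varSigma,u_\varSigma)$. Your remark that the hypothesis $x_\circ\in V$ from Theorem~\ref{thm:suboptimality} must be understood to be in force is a fair observation about the statement, but does not alter the proof.
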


\subsection{Remarks}
Within the statement on Corollary~\ref{coro:smallrob} we use two operator norms for the difference~$\delta\bfA$, namely, $\|\delta\bfA\|_{\clL(V^N,(V^N)')}$ and~$\|\delta\bfA\|_{\clL(\clD_\clA^N,H^N)}$. We may wonder whether these norms are equivalent in the intersection space~$\clL(V^N,(V^N)')\bigcap\clL(\clD_\clA^N,H^N)$. We can provide a condition which ensures equivalence as follows.

Let  $\sigma\in \fkS$ be as in Corollary~\ref{coro:smallrob}
and assume that $\clA_\sigma$ commutes which each operator with index in the training set $\varSigma$.
Define $\underline\clA_\rho\coloneqq \clA_\sigma - 2\rho I$, with $\rho$ as in \eqref{eq:aux1}. This operator generates an analytic semigroup satisfying $\|\exp (\underline\clA_\rho t)\|_{\clL(H)}\le \exp(-\rho t)$, thus~$\underline\clA_\rho$ is an operator of type $(\omega,M)$ for some $\omega <\frac{\pi}{2}$ and $M>0$,  and the fractional power $(-\underline\clA_\rho )^{-\frac{1}{2}}$ can be expressed as contour integral in the resolvent set of  $(-\underline\clA_\rho )$; see~\cite[p.~167]{bensoussan2007representation}. From here it follows that $(-\underline\clA_\rho )^{-\frac{1}{2}}$ commutes with every $\clA_\sigma$ with $\sigma \in \varSigma$.

As a second preliminary for the following computation we recall that
since $\clD(\underline\clA_\rho) = \clD(\underline\clA_\rho^*)$ the mapping $(-\underline\clA_\rho)^{\frac{1}{2}}$ is an isomorphism between ${H}$ and $V'$ as well as between $\clD(\underline\clA_\rho)$ and $V$. Thus, there exist constants $C_i, \, i\in \{1,2\}$ such that
\begin{align}
\frac{1}{C_1} \|w\|_H \le \| (-\underline\clA_\rho)^{\frac{1}{2}}w\|_{V'} \le C_1\|w\|_H,  \quad\text{and}\quad
\frac{1}{C_2} \|w\|_{\clD_\clA} \le \| (-\underline\clA_\rho)^{\frac{1}{2}}w\|_V \le C_2\|w\|_{\clD_\clA}.\notag
\end{align}
For~$v\in V$ and $\varsigma \in \varSigma \cup \{\sigma\}$ we have
\begin{align}
\|\clA_\varsigma v\|_{V'} & \le C_1 \|(-\underline\clA_\rho)^{-\frac{1}{2}} \clA_\varsigma v\|_H = C_1 \|\clA_\varsigma (-\underline\clA_\rho)^{-\frac{1}{2}} v\|_H\notag \\& \le C_1 \|\clA_\varsigma\|_{\clL(\clD_\clA,H)} \|(-\underline\clA_\rho)^{-\frac{1}{2}}  v\|_{\clD_\clA}  \le C_1 C_2 \|\clA_\varsigma\|_{\clL(\clD_\clA,H)} \|v\|_V,\notag
\end{align}
which implies that $\|\clA_\varsigma\|_{\clL(V,V')} \le C_1C_2 \|\clA_\varsigma\|_{\clL(\clD_\clA,H)}$.

On the other hand, for~$v\in\clD_\clA$, there holds
\begin{align}
\|\clA_\varsigma v\|_{H} & \le C_1 \|(-\underline\clA_\rho)^{\frac{1}{2}} \clA_\varsigma v\|_{V'} = C_1 \|\clA_\varsigma (-\underline\clA_\rho)^{\frac{1}{2}} v\|_{V'} \notag\\&\le C_1 \|\clA_\varsigma\|_{\clL(V,V')} \|(-\underline\clA_\rho)^{\frac{1}{2}}  v\|_{V} \le  C_1 C_2 \|\clA_\varsigma\|_{\clL(V,V')} \|v\|_{\clD_\clA},\notag
\end{align}
which gives~$\|\clA_\varsigma\|_{\clL(\clD_\clA,H)} \le C_1C_2 \|\clA_\varsigma\|_{\clL(V,V')}$. Hence, for~$\delta\bfA_\varsigma=\bfA_{\varSigma} - \bfA_\varsigma$, we find
\begin{align}
\frac{1}{C_1C_2} \|\delta\bfA_\varsigma\|_{\clL(\clD_\clA^N,H^N)} \le \|\delta\bfA_\varsigma\|_{\clL(V^N,(V^N)')} \le C_1 C_2 \|\delta\bfA_\varsigma\|_{\clL(\clD_\clA^N,H^N)}.\notag
\end{align}

\section{Comparing state trajectories}\label{C:comp-traj}

We compare the state trajectories associated  to problems~\eqref{eq:OPextx} and~\eqref{eq:OP1sig} and to the system~\eqref{eq:cllsys} under the proposed feedback~\eqref{eq:robFB}.

\subsection{Trajectories associated to~\eqref{eq:OPextx} and~\eqref{eq:cllsys}}
We have the following result.
\begin{theorem}\label{thm:pointwise}
Let~$x_\circ \in V$, let $(\bfx_\varSigma, u_\varSigma)$ be the minimizer of problem~\eqref{eq:OPextx} and let~$x_{\varSigma,\sigma}$ be the solution of~\eqref{eq:cllsys} with~$u_{\varSigma,\sigma}(t) =  K_{\varSigma}(t,x_{\varSigma,\sigma}(t))$. Then, for~$t \in [0,T]$, we have
\begin{align}
\|\bfx_\varSigma(t) - \clE x_{\varSigma,\sigma}(t)\|_{H^N} \leq \|\bfA_\varSigma - \bfA_\sigma\|_{\clL(\clD_\clA^N,H^N)}\mathfrak{C}_T \| \clE x_{\varSigma,\sigma} + \clE g \|_{L^2(0,T;\clD_\clA^N)},
\end{align}
where~$\mathfrak{C}_T = \sqrt{T} \max{(1, e^{-\rho T})} e^{ T (\rho +\mathfrak{C}_{\mathrm{uni}})}$ with~$\mathfrak{C}_{\mathrm{uni}} \coloneqq\| \bfB \bfB^\ast \bfPi_{\varSigma}\|_{L^\infty(0,T;\mathcal{L}(H^N))}$.
\end{theorem}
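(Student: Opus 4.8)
The plan is to set $\bfe \coloneqq \bfx_\varSigma - \clE x_{\varSigma,\sigma}$, derive a closed linear evolution equation for $\bfe$ with zero initial datum, and then estimate it by a Duhamel representation followed by Gronwall's lemma. First I would lift the closed-loop system~\eqref{eq:cllsys} to $H^N$ by applying the extension $\clE$. Using the intertwining identities $\clE\clA_\sigma = \bfA_\sigma\clE$ and $\clE B = \bfB$, the feedback law $K_\varSigma(t,x_{\varSigma,\sigma}) = -\bfB^\ast(\bfPi_\varSigma(T-t)\clE x_{\varSigma,\sigma} + \bfh(t))$ from~\eqref{eq:robFB}, and $\clE(\clA_\sigma g - \dot g) = \bff_\sigma$ from~\eqref{eq:bffsig}, the function $X \coloneqq \clE x_{\varSigma,\sigma}$ would be seen to solve
\begin{equation*}
\dot X = \bigl(\bfA_\sigma - \bfB\bfB^\ast\bfPi_\varSigma(T-t)\bigr)X - \bfB\bfB^\ast\bfh(t) + \bff_\sigma(t),\qquad X(0)=\bfx_\circ.
\end{equation*}
Subtracting this from the closed-loop form~\eqref{eq:xopt} of the optimal state $\bfx_\varSigma$, the $-\bfB\bfB^\ast\bfh$ terms cancel; writing $\bfA_\varSigma\bfx_\varSigma - \bfA_\sigma X = \bfA_\varSigma\bfe + \delta\bfA\,X$ with $\delta\bfA \coloneqq \bfA_\varSigma - \bfA_\sigma$ and using $\bff - \bff_\sigma = \delta\bfA\,\clE g$ (from~\eqref{eq:bff} and~\eqref{eq:bffsig}), this yields
\begin{equation*}
\dot\bfe = \bigl(\bfA_\varSigma - \bfB\bfB^\ast\bfPi_\varSigma(T-t)\bigr)\bfe + \delta\bfA\,\clE(x_{\varSigma,\sigma}+g),\qquad \bfe(0)=0.
\end{equation*}

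Next I would record the regularity that makes every term meaningful: exactly as in the proof of Theorem~\ref{thm:suboptimality}, $x_\circ\in V$ gives $\clE x_{\varSigma,\sigma}\in W_T(\clD_\clA^N,H^N)$, while $\clE g\in W_T(\clD_\clA^N,H^N)$ by the standing hypothesis on $g$. Hence the forcing $\delta\bfA\,\clE(x_{\varSigma,\sigma}+g)$ lies in $L^2(0,T;H^N)$ and the right-hand norm $\|\clE x_{\varSigma,\sigma}+\clE g\|_{L^2(0,T;\clD_\clA^N)}$ is finite. I would then represent $\bfe$ through the variation-of-constants formula for the $\clC_0$-semigroup $\bfS_\varSigma$ generated by $\bfA_\varSigma$, treating the bounded term $-\bfB\bfB^\ast\bfPi_\varSigma(T-\cdot)\bfe$ as part of the inhomogeneity,
\begin{equation*}
\bfe(t) = \int_0^t \bfS_\varSigma(t-s)\Bigl[-\bfB\bfB^\ast\bfPi_\varSigma(T-s)\bfe(s) + \delta\bfA\,\clE(x_{\varSigma,\sigma}+g)(s)\Bigr]\,\rmd s.
\end{equation*}

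Invoking the exponential bound $\|\bfS_\varSigma(\tau)\|_{\clL(H^N)}\le e^{\rho\tau}$ (inherited componentwise from $\|S_\sigma(t)\|_{\clL(H)}\le e^{\rho t}$) and $\|\bfB\bfB^\ast\bfPi_\varSigma(T-s)\|_{\clL(H^N)}\le\mathfrak{C}_{\mathrm{uni}}$, I would apply Gronwall's lemma to $w(t)\coloneqq e^{-\rho t}\|\bfe(t)\|_{H^N}$ and obtain
\begin{equation*}
\|\bfe(t)\|_{H^N}\le e^{(\rho+\mathfrak{C}_{\mathrm{uni}})t}\,\|\delta\bfA\|_{\clL(\clD_\clA^N,H^N)}\int_0^t e^{-\rho s}\,\|\clE(x_{\varSigma,\sigma}+g)(s)\|_{\clD_\clA^N}\,\rmd s.
\end{equation*}
Bounding $e^{-\rho s}\le\max(1,e^{-\rho T})$ and $e^{(\rho+\mathfrak{C}_{\mathrm{uni}})t}\le e^{T(\rho+\mathfrak{C}_{\mathrm{uni}})}$ on $[0,T]$, and using Cauchy--Schwarz in $s$ to extract $\sqrt T$ together with the $L^2$-norm in time, would collect precisely the constant $\mathfrak{C}_T$ and close the argument.

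I expect the main obstacle to be the bookkeeping in the first step rather than the analytic estimate: one must verify each intertwining identity, the cancellation of the $\bfB\bfB^\ast\bfh$ terms, the splitting $\bfA_\varSigma\bfx_\varSigma-\bfA_\sigma X=\bfA_\varSigma\bfe+\delta\bfA\,X$, and $\bff-\bff_\sigma=\delta\bfA\,\clE g$. The subsequent Duhamel and Gronwall manipulation is routine once the $W_T(\clD_\clA^N,H^N)$-regularity noted above legitimizes working with genuine (not merely mild) solutions.
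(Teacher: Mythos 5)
Your proposal is correct and follows essentially the same route as the paper: both derive the error equation $\dot{\delta\bfx}=(\bfA_\varSigma-\bfB\bfB^\ast\bfPi_\varSigma(T-t))\delta\bfx+\delta\bfA\,\clE(x_{\varSigma,\sigma}+g)$ with $\delta\bfx(0)=0$ (the paper via the open-loop forms plus the feedback difference $u_\varSigma-K_\varSigma=-\bfB^\ast\bfPi_\varSigma(T-t)\delta\bfx$, you via subtracting the two closed-loop forms, which is the same cancellation), and then apply the identical Duhamel representation, exponential semigroup bound, Gronwall argument, and Cauchy--Schwarz step to obtain the constant $\mathfrak{C}_T$. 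No gaps.
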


\begin{proof}
With~$W_T(\clX,\clY)$ as in~\eqref{eq:def-Space_T}, we have~$\clE x_{\varSigma,\sigma}\in W_T(\clD_\clA^N,H^N)$, due to $x_\circ \in V$. Denoting $\delta \bfx \coloneqq \bfx_\varSigma - \clE x_{\varSigma,\sigma}$, $\delta \bfA \coloneqq \bfA_\varSigma - \bfA_\sigma$, and $\bff_\sigma \coloneqq \bfA_\sigma \clE g - \clE \dot{g}$, there holds
\begin{align}
\dot{\delta \bfx}(t) = \bfA_\varSigma \delta \bfx(t) + \delta \bfA \clE x_{\varSigma,\sigma}(t) + \bff(t) - \bff_\sigma(t) + \bfB (u_\varSigma -  K_{\varSigma}(t,x_{\varSigma,\sigma}(t))),\notag
\end{align}
for $t>0$ and $\delta \bfx(0) = 0$. Using \eqref{eq:uopt} and~\eqref{eq:robFB}, we obtain
\begin{align}\label{eq:fbdiff}
u_\varSigma(t) -  K_{\varSigma}(t,x_{\varSigma,\sigma}(t)) = - \bfB^\ast \bfPi_\varSigma(T-t) \delta \bfx(t).
\end{align}
Thus, recalling~\eqref{eq:bff} and~\eqref{eq:bffsig}, we have that~$\bff(t) - \bff_\sigma(t) = \delta\bfA \clE g(t)$, and we obtain
\begin{align}
\dot{\delta \bfx}(t) = \left(\bfA_\varSigma -  \bfB \bfB^* \bfPi_\varSigma(T-t)\right) \delta \bfx(t) + \delta \bfA \clE x_{\varSigma,\sigma}(t) + \delta\bfA \clE g(t).\notag
\end{align}
We can represent the solution (see, e.g.,~\cite[Prop.~3.4, Part II, Ch.~1]{bensoussan2007representation}) as
\begin{align}
\delta \bfx(t) = \int_0^t \bfS_\varSigma(t-s)  \left( - \bfB \bfB^* \bfPi_\varSigma(T-s) \delta \bfx(s) + \delta \bfA \left( \clE x_{\varSigma,\sigma}(s) + \clE g(s) \right) \right)\mathrm ds,\notag
\end{align}
leading to the estimate
\begin{align}
\|\delta \bfx(t)\|_{H^N} &\leq \int_0^t \left(\|\bfS_\varSigma(t-s)\|_{\clL(H^N)} \| \bfB \bfB^* \bfPi_\varSigma(T-s)\|_{\clL(H^N)} \|\delta \bfx(s)\|_{ H^N} \right)\mathrm ds\notag\\
&+ \int_0^t \left( \|\bfS_\varSigma(t-s)\|_{\clL(H^N)} \|\delta \bfA\|_{\clL(\clD_\clA^N,H^N)} \| \clE x_{\varSigma,\sigma}(s) + \clE g(s) \|_{\clD_\clA^N} \right) \mathrm ds\notag\\
&\leq \int_0^t \left(\mathfrak{C}_{\mathrm{uni}} \rme^{\rho (t-s)} \|\delta \bfx(s)\|_{H^N} \right)\mathrm ds\notag\\
&+ \int_0^t \left( \rme^{\rho (t-s)} \|\delta \bfA\|_{\clL(\clD_\clA^N, H^N)} \| \clE x_{\varSigma,\sigma}(s) + \clE g(s) \|_{\clD_\clA^N} \right) \mathrm ds\notag
\end{align}
where we took~$\mathfrak{C}_{\mathrm{uni}}\coloneqq\| \bfB \bfB^* \bfPi_\varSigma\|_{L^\infty(0,T;\clL(H^N)}$ and used $\|\bfS_\varSigma(t-s)\|_{\clL(H^N)} \le \rme^{\rho (t-s)}$. Multiplication by~$\rme^{-\rho t}$ gives
\begin{align}
\|\rme^{-\rho t}\delta \bfx(t)\|_{H^N} &\leq \int_0^t \left(\mathfrak{C}_{\mathrm{uni}}  \|\rme^{-\rho s} \delta \bfx(s)\|_{H^N} \right)\mathrm ds\notag\\
&\quad+ \|\delta \bfA\|_{\clL(\clD_\clA^N,H^N)}  \int_0^t \left( \rme^{-\rho s} \| \clE x_{\varSigma,\sigma}(s) + \clE g(s) \|_{\clD_\clA^N} \right) \mathrm ds.\notag
\end{align}
Then, Gronwall's lemma gives
\begin{align}
\|\rme^{-\rho t} \delta \bfx(t)\|_{H^N} &\leq  \rme^{\int_0^t  \mathfrak{C}_{\mathrm{uni}}  \mathrm ds}\|\delta \bfA\|_{\clL(\clD_\clA^N,H^N)} \int_0^t \left(\rme^{-\rho s}\| \clE x_{\varSigma,\sigma}(s) + \clE g(s) \|_{\clD_\clA^N} \right)\mathrm ds,\notag
\end{align}
and finally we obtain
\begin{align}
\|\delta \bfx(t)\|_{H^N} &\leq \left( \|\delta \bfA\|_{\clL(\clD_\clA^N, H^N)} \int_0^t \left(\rme^{-\rho s}\| \clE x_{\varSigma,\sigma}(s) + \clE g(s) \|_{\clD_\clA^N} \right)\mathrm ds \right) \rme^{ t (\rho +\mathfrak{C}_{\mathrm{uni}} )}\notag\\
&\leq \|\delta \bfA\|_{\clL(\clD_\clA^N, H^N)} \left(\int_0^t \rme^{-2\rho s} \mathrm ds\right)^{\frac12} \| \clE x_{\varSigma,\sigma} + \clE g \|_{L^2(0,t;\clD_\clA^N)}  \rme^{ t (\rho +\mathfrak{C}_{\mathrm{uni}} )}\notag\\
&\leq \|\delta \bfA\|_{\clL(\clD_\clA^N, H^N)}\sqrt{T} \max{(1, \rme^{-\rho T})} \| \clE x_{\varSigma,\sigma} + \clE g \|_{L^2(0,T;\clD_\clA^N)} \rme^{ T (\rho +\mathfrak{C}_{\mathrm{uni}} )}\!,\notag
\end{align}
where we used the Cauchy--Schwarz inequality in the second step.
\end{proof}

A close inspection of~\eqref{eq:fbdiff} reveals the following bound on the difference of the controls.
\begin{corollary}\label{coro:controlpointwise1}
Under the assumptions of Theorem~\ref{thm:pointwise}, there holds, for~$t\in [0,T]$, that
\begin{align}
\|u_\varSigma(t) - u_{\varSigma,\sigma}(t)\|_{ U} 
\le \|\bfA_\varSigma - \bfA_\sigma\|_{\clL(\clD_\clA^N, H^N)}  \mathfrak{C}_{\mathrm{uni}2} \mathfrak{C}_T \| \clE x_{\varSigma,\sigma} + \clE g \|_{L^2(0,T;\clD_\clA^N)}.\notag
\end{align}
with the uniform bound $\mathfrak{C}_{\mathrm{uni}2}\coloneqq \|\bfB^* \bfPi_{\varSigma}\|_{L^\infty(0,T;\mathcal{L}(H^N,U))}$, and~$\mathfrak{C}_T$ as in Theorem~\ref{thm:pointwise}.
\end{corollary}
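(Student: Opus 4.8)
The plan is to read the estimate off directly from the pointwise identity~\eqref{eq:fbdiff} that was already established inside the proof of Theorem~\ref{thm:pointwise}, so that no new differential equation, representation formula, or Gronwall argument is required. First I would recall that, by hypothesis, $u_{\varSigma,\sigma}(t)=K_{\varSigma}(t,x_{\varSigma,\sigma}(t))$, so the left-hand side of~\eqref{eq:fbdiff} is exactly the control difference we wish to bound. That identity reads
\[
u_\varSigma(t) - u_{\varSigma,\sigma}(t) = -\bfB^\ast \bfPi_\varSigma(T-t)\,\delta\bfx(t),
\]
where $\delta\bfx\coloneqq\bfx_\varSigma-\clE x_{\varSigma,\sigma}$ as in Theorem~\ref{thm:pointwise}. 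Thus the entire content of the corollary is the composition of this exact identity with an operator-norm bound and the pointwise state estimate already proven.

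Next I would take the $ U$-norm of both sides and apply submultiplicativity of the operator norm, giving $\|u_\varSigma(t)-u_{\varSigma,\sigma}(t)\|_{ U}\le\|\bfB^\ast\bfPi_\varSigma(T-t)\|_{\clL(H^N, U)}\,\|\delta\bfx(t)\|_{H^N}$. Since, as $t$ ranges over $[0,T]$, the argument $T-t$ also ranges over $[0,T]$, the very definition of $\mathfrak{C}_{\mathrm{uni}2}=\|\bfB^\ast\bfPi_\varSigma\|_{L^\infty(0,T;\clL(H^N, U))}$ yields $\|\bfB^\ast\bfPi_\varSigma(T-t)\|_{\clL(H^N, U)}\le\mathfrak{C}_{\mathrm{uni}2}$ for almost every $t\in[0,T]$. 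It then remains only to substitute the pointwise bound furnished by Theorem~\ref{thm:pointwise}, namely
\[
\|\delta\bfx(t)\|_{H^N}\le\|\bfA_\varSigma-\bfA_\sigma\|_{\clL(\clD_\clA^N,H^N)}\,\mathfrak{C}_T\,\|\clE x_{\varSigma,\sigma}+\clE g\|_{L^2(0,T;\clD_\clA^N)},
\]
and the asserted inequality follows immediately.

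There is no genuine obstacle here; the result is a direct corollary in the literal sense, and the writing reduces to chaining one identity with one estimate. The only points worth verifying are of a bookkeeping nature: that $\mathfrak{C}_{\mathrm{uni}2}$ is finite, which follows from $\bfPi_\varSigma\in\mathcal S$ being uniformly bounded in $\clL(H^N)$ together with boundedness of $\bfB$; and that the $L^\infty$ supremum defining $\mathfrak{C}_{\mathrm{uni}2}$ is indeed the quantity controlling $\|\bfB^\ast\bfPi_\varSigma(T-t)\|$ under the time reversal $t\mapsto T-t$, as noted above. With these remarks in place the estimate holds for every $t\in[0,T]$, which is precisely the claim.
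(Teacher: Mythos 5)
Your argument is correct and is precisely the one the paper intends: the text before the corollary says the bound follows from ``a close inspection of~\eqref{eq:fbdiff}'', i.e.\ from taking the $U$-norm of the identity $u_\varSigma(t)-u_{\varSigma,\sigma}(t)=-\bfB^\ast\bfPi_\varSigma(T-t)\,\delta\bfx(t)$, bounding the operator by $\mathfrak{C}_{\mathrm{uni}2}$, and inserting the pointwise state estimate of Theorem~\ref{thm:pointwise}. Nothing further is needed.
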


\subsection{Trajectories associated to~\eqref{eq:OPextx} and~\eqref{eq:OP1sig}}
We have the following result.

\begin{theorem}\label{thm:pointwisestate2}
Let~$(\bfx_\varSigma, u_\varSigma)$ be the minimizer of~\eqref{eq:OPextx} and let~$(x_\sigma, u_\sigma)$ be the minimizer of~\eqref{eq:OP1sig}. Then, we have
\begin{align}
\|\bfx_\varSigma(t) - \clE x_\sigma(t)\|_{H^N} \le \max{\left(1,e^{(\frac12+\rho)T}\right)}\fkC_{T2}\|\bfA_\varSigma - \bfA_\sigma\|_{\clL(V^N,(V^N)')},\notag
\end{align}
with~$\fkC_{T2}\coloneqq   \left(\theta^{-\frac{1}{2}} \|\clE (x_\sigma + g)\|_{L^2(0,T;V^N)} +  \sqrt{2\mathfrak{C}} \|\bfB\|_{\clL( U,H^N)}\right)$,
where~$\rho$ and~$\theta$ are as in~\eqref{eq:aux1}, and~$\mathfrak{C}_1$ is as in~\eqref{eq:fkC}.
\end{theorem}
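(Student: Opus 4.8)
The plan is to reduce to the same difference system that already appeared in the proof of Lemma~\ref{lem:objdiff1} and then to run an energy (variational) estimate rather than the mild-solution argument of Theorem~\ref{thm:pointwise}, the point being that here the operator difference is measured in the weaker norm $\|\Bigcdot\|_{\clL(V^N,(V^N)')}$, so the forcing term it produces lives only in $(V^N)'$. Writing $\delta\bfx\coloneqq\bfx_\varSigma-\clE x_\sigma$, $\delta u\coloneqq u_\varSigma-u_\sigma$, and $\delta\bfA\coloneqq\bfA_\varSigma-\bfA_\sigma$, I first apply $\clE$ to the state equation in~\eqref{eq:OP1sig}, using $\clE\clA_\sigma x_\sigma=\bfA_\sigma\clE x_\sigma$, $\clE B=\bfB$, and $\clE f=\bff_\sigma$ with $\bff_\sigma$ as in~\eqref{eq:bffsig}. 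Subtracting from~\eqref{eq:optcondAext} and using $\bff-\bff_\sigma=\delta\bfA\clE g$ reproduces exactly~\eqref{eq:optcondiffA}, namely
\begin{align}
\dot{\delta\bfx}=\bfA_\varSigma\delta\bfx+\delta\bfA\clE(x_\sigma+g)+\bfB\delta u,\qquad\delta\bfx(0)=0.\notag
\end{align}
Since $\clE(x_\sigma+g)\in V_T^N$ and $\bfB\delta u\in H_T^N$, the source lies in $(V_T^N)'$, so the isomorphism~\eqref{eq:iso_dyn+ic} gives $\delta\bfx\in W_T(V^N,(V^N)')\hookrightarrow\clC([0,T];H^N)$ and justifies $\frac{\mathrm d}{\mathrm d t}\|\delta\bfx\|_{H^N}^2=2\langle\dot{\delta\bfx},\delta\bfx\rangle_{(V_T^N)',V_T^N}$ used below.

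The core step is the energy estimate. Pairing the equation with $\delta\bfx$ and using the $V$--$H$ coercivity~\eqref{eq:aux1} componentwise (with the same $(\rho,\theta)$ for every parameter in $\varSigma$), which gives $\langle\bfA_\varSigma\delta\bfx,\delta\bfx\rangle_{(V^N)',V^N}\le\rho\|\delta\bfx\|_{H^N}^2-\theta\|\delta\bfx\|_{V^N}^2$, I obtain
\begin{align}
\tfrac12\tfrac{\mathrm d}{\mathrm d t}\|\delta\bfx\|_{H^N}^2 &\le -\theta\|\delta\bfx\|_{V^N}^2+\rho\|\delta\bfx\|_{H^N}^2\notag\\
&\quad+\langle\delta\bfA\clE(x_\sigma+g),\delta\bfx\rangle_{(V^N)',V^N}+\langle\bfB\delta u,\delta\bfx\rangle_{H^N}.\notag
\end{align}
The main obstacle---and precisely why a bare semigroup estimate is awkward---is that $\delta\bfA\clE(x_\sigma+g)$ only belongs to $(V^N)'$, not to $H^N$. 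I handle it by the duality bound $\|\delta\bfA\|_{\clL(V^N,(V^N)')}\|\clE(x_\sigma+g)\|_{V^N}\|\delta\bfx\|_{V^N}$ followed by Young's inequality to produce $\tfrac\theta2\|\delta\bfx\|_{V^N}^2$, which is absorbed into the coercivity term $-\theta\|\delta\bfx\|_{V^N}^2$. The term $\langle\bfB\delta u,\delta\bfx\rangle_{H^N}$ is estimated in $H^N$ by $\tfrac12\|\bfB\delta u\|_{H^N}^2+\tfrac12\|\delta\bfx\|_{H^N}^2$; it is this last Young split that contributes the extra $\tfrac12$ appearing in the final exponent. Dropping the leftover $-\tfrac\theta2\|\delta\bfx\|_{V^N}^2\le0$ and multiplying by two leaves
\begin{align}
\tfrac{\mathrm d}{\mathrm d t}\|\delta\bfx\|_{H^N}^2\le(2\rho+1)\|\delta\bfx\|_{H^N}^2+\tfrac1\theta\|\delta\bfA\|_{\clL(V^N,(V^N)')}^2\|\clE(x_\sigma+g)\|_{V^N}^2+\|\bfB\delta u\|_{H^N}^2.\notag
\end{align}

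To close, I apply Gronwall's lemma with $\delta\bfx(0)=0$, bound the kernel by $e^{(2\rho+1)(t-s)}\le\max(1,e^{(2\rho+1)T})$ for $0\le s\le t\le T$, and integrate the two forcing terms to their $L^2(0,T)$ norms, giving
\begin{align}
\|\delta\bfx(t)\|_{H^N}^2 &\le \max(1,e^{(2\rho+1)T})\notag\\
&\quad\times\Bigl(\tfrac1\theta\|\clE(x_\sigma+g)\|_{L^2(0,T;V^N)}^2+\|\bfB\|_{\clL(U,H^N)}^2\|\delta u\|_{U_T}^2\Bigr)\|\delta\bfA\|_{\clL(V^N,(V^N)')}^2.\notag
\end{align}
Here I invoke Lemma~\ref{lem:objdiff1}: since $\tfrac12\|\delta u\|_{U_T}^2\le\clJ(\delta\bfx,\delta u)\le\mathfrak{C}_1\|\delta\bfA\|_{\clL(V^N,(V^N)')}^2$, one has $\|\delta u\|_{U_T}^2\le2\mathfrak{C}_1\|\delta\bfA\|_{\clL(V^N,(V^N)')}^2$ (this is the $\mathfrak{C}_1$ from~\eqref{eq:fkC} entering $\fkC_{T2}$). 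Finally, taking square roots, using $\sqrt{\max(1,e^{(2\rho+1)T})}=\max(1,e^{(\rho+\frac12)T})$ together with the elementary inequality $\sqrt{a^2+b^2}\le a+b$ applied to $a=\theta^{-\frac12}\|\clE(x_\sigma+g)\|_{L^2(0,T;V^N)}$ and $b=\sqrt{2\mathfrak{C}_1}\|\bfB\|_{\clL(U,H^N)}$ collapses the right-hand side to $\max(1,e^{(\rho+\frac12)T})\,\fkC_{T2}\,\|\delta\bfA\|_{\clL(V^N,(V^N)')}$, which is the asserted estimate.
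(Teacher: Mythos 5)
Your proof is correct and takes essentially the same route as the paper's: the same energy estimate on the difference system~\eqref{eq:optcondiffA} using the $V$--$H$ coercivity~\eqref{eq:aux1}, the same Young splittings (absorbing $\tfrac{\theta}{2}\|\delta\bfx\|_{V^N}^2$ into the coercivity term and generating the extra $\tfrac12$ in the exponent), Gronwall with $\delta\bfx(0)=0$, and the appeal to Lemma~\ref{lem:objdiff1} to bound $\|\delta u\|_{U_T}^2\le 2\mathfrak{C}_1\|\delta\bfA\|_{\clL(V^N,(V^N)')}^2$.
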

\begin{proof}
Let now~$\delta \bfx\coloneqq \bfx_\varSigma - \clE x_\sigma$, $\delta u \coloneqq u_\varSigma - u_\sigma$, and $\delta\bfA = \bfA_\varSigma - \bfA_\sigma$. Recall that $\delta \bfx$ satisfies~\eqref{eq:optcondiffA}. Thus, with~$\theta$ and~$\rho$ as in~\eqref{eq:aux1}, we find
\begin{align}
&\frac12 \frac{\mathrm d}{\mathrm dt} \|\delta \bfx_\varSigma\|_{H^N}^2 \notag\\&\!\!\quad= \langle \bfA_\varSigma \delta \bfx_\varSigma, \delta \bfx_\varSigma\rangle_{H^N} + \langle \delta \bfA \clE (x_\sigma(t) + g(t)), \delta \bfx_\varSigma \rangle_{(V^N)',V^N} + \langle \bfB \delta u_\varSigma, \delta \bfx_\varSigma \rangle_{H^N}\notag\\
&\!\!\quad\le \rho \|\delta \bfx_\varSigma\|_{H^N}^2 - \theta \|\delta \bfx_\varSigma\|_{V^N}^2 + \|\delta \bfA\|_{\clL(V^N,(V^N)')}^2 \frac{1}{2\theta} \|\clE(x_\sigma(t) + g(t))\|_{V^N}^2\notag\\
&\!\!\quad\quad  + \frac{\theta}{2} \|\delta \bfx_\varSigma\|_{V^N}^2 + \|\bfB\|_{\clL(U,H^N)}^2 \frac12 \|\delta u_\varSigma\|^2_{U}+ \frac12 \|\delta \bfx_\varSigma\|_{H^N}^2,\notag
\end{align}
where we used Young's inequality $ab \le \frac{a^2}{2\theta} + \frac{\theta b^2}{2}$ for $a\ge0, b\ge 0$. Furthermore, with Gronwall's lemma and~$\delta \bfx_\varSigma(0) = 0$, we obtain
\begin{align}
\|\delta \bfx_\varSigma(t)\|_{H^N}^2 &\le \int_0^t e^{(1+2\rho)(t-s)} \left( \|\delta \bfA\|_{\clL(V^N,(V^N)')}^2 \frac{1}{\theta}\|\clE(x_\sigma(s) + g(s))\|_{V^N}^2 \right) \mathrm ds\notag\\
&\quad+\int_0^t e^{(1+2\rho)(t-s)} \left(  \|\bfB\|_{\clL(U,H^N)}^2 \|\delta u_\varSigma(s)\|^2_{U}\right) \mathrm ds\notag\\
&\le  \|\delta \bfA\|_{\clL(V^N,(V^N)')}^2 \max{\left(1,e^{(1+2\rho)T}\right)}\int_0^t \frac{1}{\theta} \|\clE(x_\sigma(s) + g(s))\|_{V^N}^2 \mathrm ds\notag\\
&\quad+  \|\bfB\|_{\clL(U,H^N)}^2 \max{\left(1,e^{(1+2\rho)T}\right)}\int_0^t \|\delta u_\varSigma(s)\|^2_{U} \mathrm ds.\notag
\end{align}
Furthermore, with~$\mathfrak{C}_{1}$ as in Lemma~\ref{lem:objdiff1} we have $\int_0^t \|\delta u_\varSigma(s)\|_{U}^2 \mathrm ds \le 2\clJ(\delta\bfx, \delta u) \le 2\mathfrak{C}_1\|\delta\bfA\|^2_{\clL(V^N,(V^N)')}$. Finally, we conclude that, for all~$t\in[0,T]$,
\begin{align}
\|\delta \bfx_\varSigma(t)\|_{H^N} \le \max{\left(1,e^{(\frac12+\rho)T}\right)}\fkC_{T2}\|\delta \bfA\|_{\clL(V^N,(V^N)')} .\notag
\end{align}
with~$\fkC_{T2}\coloneqq   \left( \theta^{-\frac{1}{2}} \|\clE (x_\sigma + g)\|_{L^2(0,T;V^N)} +  \sqrt{2\mathfrak{C}_{1}}\|\bfB\|_{\clL( U,H^N)}\right)$.
\end{proof}

A similar estimate holds for the difference~$\delta u$ of the corresponding controls, as follows.
\begin{corollary}\label{coro:controlpointwise2}
Under the conditions of Theorem~\ref{thm:pointwisestate2}, there holds
\begin{align}
&\|u_\varSigma(t) - u_\sigma(t)\|_{U}\le\|B\|_{\clL(U,H)}\mathfrak{C}_{T3}\|\bfA_\varSigma - \bfA_\sigma\|_{\clL(V^N,(V^N)')}\notag
\end{align}
with~$\fkC_{T3}\coloneqq \max{(1,\rme^{\frac{T}{2}\left(\|Q_\rme\|_{\clL(H^N,Y^N)}^2+2\rho\right)})}\sqrt{2\mathfrak{C}_{1} +\theta^{-1}\|p_\sigma(s)\|_{V_T}^2}$, where~$\rho\in\bbR$ and~$\theta>0$ are as in~\eqref{eq:aux1}, and~$\mathfrak{C}_{1}$ is as in~\eqref{eq:fkC}.
\end{corollary}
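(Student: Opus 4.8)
The plan is to reduce the bound on $\delta u\coloneqq u_\varSigma-u_\sigma$ to a pointwise-in-time estimate of the adjoint difference $\delta\bfp$, and then to obtain the latter from an energy estimate run backward in time, in close parallel with the argument for $\delta\bfx$ in Theorem~\ref{thm:pointwisestate2}. From the optimality relation~\eqref{eq:optcondiffE} we have $\delta u=-\bfB^\ast\delta\bfp$, and since $\bfB=\clE B$ we may write $\bfB^\ast\delta\bfp=B^\ast\clE^\ast\delta\bfp$ with $\|\clE^\ast\delta\bfp\|_{H}\le\sqrt N\,\|\delta\bfp\|_{H^N}$; hence $\|\delta u(t)\|_{U}\le\sqrt N\,\|B\|_{\clL(U,H)}\|\delta\bfp(t)\|_{H^N}$, so everything comes down to estimating $\|\delta\bfp(t)\|_{H^N}$. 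The spurious factor $\sqrt N$ will later be cancelled by the $\tfrac1N$ weights carried by the source terms of~\eqref{eq:optcondiffC}, which is exactly what produces the clean constant $\|B\|_{\clL(U,H)}$ in the statement.

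First I would record the regularity $\delta\bfp\in W_T(\clD_\clA^N,H^N)\subset\clC([0,T];V^N)$ already established in the proof of Lemma~\ref{lem:objdiff1}, so that the energy identity for $\|\delta\bfp(\Bigcdot)\|_{H^N}^2$ is justified. Because~\eqref{eq:optcondiffC} is posed backward in time, I would reverse time and test the resulting forward equation for $\tau\mapsto\delta\bfp(T-\tau)$ against $\delta\bfp(T-\tau)$. The crucial structural input is that $\bfA_\varSigma^\ast$ inherits the $V$–$H$ coercivity constants $(\rho,\theta)$ of~\eqref{eq:aux1}: by Assumption~\ref{A:domindsig} we have $\clD(\clA_\sigma)=\clD(\clA_\sigma^\ast)$ and the symmetric part of the form is unchanged, so $\langle\bfA_\varSigma^\ast w,w\rangle_{(V^N)',V^N}\le\rho\|w\|_{H^N}^2-\theta\|w\|_{V^N}^2$. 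I would then treat the two source terms of~\eqref{eq:optcondiffC} by Young's inequality: the term $\delta\bfA^\ast\tfrac1N\clE p_\sigma$, paired in the $(V^N)'$–$V^N$ duality, is split so that its $\|\delta\bfp\|_{V^N}$ factor is absorbed into $-\theta\|\delta\bfp\|_{V^N}^2$, leaving a forcing proportional to $\theta^{-1}\tfrac1{N^2}\|\clE p_\sigma\|_{V^N}^2$; the term $\tfrac1N Q_\rme^\ast Q_\rme\delta\bfx$ is estimated by $\tfrac1N\|Q_\rme\delta\bfx\|_{Y^N}\|Q_\rme\|\,\|\delta\bfp\|_{H^N}$ and split so that a multiple $\tfrac12\|Q_\rme\|^2\|\delta\bfp\|_{H^N}^2$ is moved into the Gronwall rate, leaving a forcing proportional to $\tfrac1{N^2}\|Q_\rme\delta\bfx\|_{Y^N}^2$. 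This yields a differential inequality of the form $\tfrac{\rmd}{\rmd\tau}\|\delta\bfp(T-\tau)\|_{H^N}^2\le(2\rho+\|Q_\rme\|^2)\|\delta\bfp(T-\tau)\|_{H^N}^2+(\text{forcing})$, and Gronwall's lemma produces the factor $\max(1,\rme^{\frac T2(\|Q_\rme\|^2+2\rho)})$ after taking square roots, matching the exponential in $\fkC_{T3}$.

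The step that requires care — and the one I expect to be the main obstacle — is controlling the $\delta\bfx$-dependent contributions \emph{without} re-deriving a trajectory bound, which would import the constant $\fkC_{T2}$ and the wrong power of $\|\delta\bfA\|$ from Theorem~\ref{thm:pointwisestate2}. Instead I would invoke the cost identity~\eqref{eq:deltaJ} together with $\clJ(\delta\bfx,\delta u)\le\mathfrak{C}_1\|\bfA_\varSigma-\bfA_\sigma\|_{\clL(V^N,(V^N)')}^2$ from Lemma~\ref{lem:objdiff1}, which gives at once $\tfrac1N\|Q_\rme\delta\bfx\|_{Y_T^N}^2+\tfrac1N\|P_\rme\delta\bfx(T)\|_{Z^N}^2\le 2\mathfrak{C}_1\|\bfA_\varSigma-\bfA_\sigma\|^2$. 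Integrating the forcing term $\tfrac1{N^2}\|Q_\rme\delta\bfx\|_{Y^N}^2$ over $[0,T]$ and bounding the terminal datum $\delta\bfp(T)=\tfrac1N P_\rme^\ast P_\rme\delta\bfx(T)$ through these same quantities supplies the $2\mathfrak{C}_1$ under the square root once the $\sqrt N$ of the first paragraph is restored, while the $p_\sigma$ source yields the remaining $\theta^{-1}\|p_\sigma\|_{V_T}^2$ via $\|\clE p_\sigma\|_{V_T^N}^2=N\|p_\sigma\|_{V_T}^2$. The delicate bookkeeping lies precisely in the cancellation of the powers of $N$ and in checking that the terminal datum is genuinely absorbed into the $2\mathfrak{C}_1$ bound (up to a harmless adjustment of the $\|P_\rme\|$- and $\|Q_\rme\|$-dependent constants); collecting these pieces and combining with the first paragraph then gives the asserted estimate with $\fkC_{T3}$.
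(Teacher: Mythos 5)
Your proposal follows essentially the same route as the paper's proof: a time-reversed energy estimate for $\delta\bfp$ using the $V$--$H$ coercivity of $\bfA_\varSigma^\ast$, Young's inequality to push $\|Q_\rme\|_{\clL(H^N,Y^N)}^2$ into the Gronwall rate, the cost bound of Lemma~\ref{lem:objdiff1} (rather than the trajectory bound of Theorem~\ref{thm:pointwisestate2}) to control both the $Q_\rme\delta\bfx$ forcing and the terminal datum $\delta\bfp(T)=\frac1N P_\rme^\ast P_\rme\delta\bfx(T)$, and the cancellation of $\sqrt N$ against the $\frac1N$ weights via $\|\bfB^\ast\|_{\clL(H^N,U)}\le\sqrt N\,\|B\|_{\clL(U,H)}$. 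The point you flag about absorbing the terminal datum up to a $\|P_\rme\|$-dependent constant is exactly the (mildly glossed-over) step in the paper's own argument, so the proposal is correct and matches the paper.
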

\begin{proof}
Let~$\delta \bfx = \bfx_\varSigma - \clE x_\sigma$, $\delta u = u_\varSigma - u_\sigma$, and $\delta\bfA = \bfA_\varSigma - \bfA_\sigma$; further, let~$\delta \bfp = \bfp_\varSigma - \clE p_\sigma$. From~\eqref{eq:optcondiffC} we find,  for~$t\in(0,T)$,
\begin{align}
\frac{\mathrm d}{\mathrm dt} \|\delta \bfp(T-t)\|_{H^N}^2 &=2(\bfA^\ast_{\varSigma} \delta\bfp(T-t) ,\delta\bfp(T-t))_{H^N} \notag\\
&+2(\delta\bfA^\ast \frac1 N \clE p_{\sigma}(T-t) + \frac1 N Q_\rme^*Q_\rme \delta \bfx(T-t),\delta\bfp(T-t))_{H^N}.\notag
\end{align}
Then, by~\eqref{eq:aux1}, \eqref{eq:bilinearform},
\begin{align}
\frac{\mathrm d}{\mathrm dt} \|\delta \bfp(T-t)\|_{H^N}^2 &\le 2\rho\|\delta \bfp(T-t)\|_{H^N}^2- 2\theta\|\delta \bfp(T-t)\|_{V^N}^2\notag\\
&\quad+ \frac2 N\|\delta \bfA\|_{\clL(V^N,(V^N)')}\|\clE p_\sigma(T-t)\|_{V^N}\|\delta\bfp(T-t)\|_{V^N}\notag\\
&\quad+\frac2 N\| Q_\rme \delta \bfx(T-t)\|_{Y^N}\|Q_\rme\|_{\clL(H^N,Y^N)}\|\delta\bfp(T-t)\|_{H^N}\notag
\end{align}
and, by Young's inequality, 
\begin{align}
&\theta\|\delta \bfp(T-t)\|_{V^N}^2\!+\frac{\mathrm d}{\mathrm dt} \|\delta \bfp(T-t)\|_{H^N}^2 \le (\|Q_\rme\|_{\clL(H^N,Y^N)}^2\!+2\rho)\|\delta \bfp(T-t)\|_{H^N}^2+\Gamma(T-t),\notag
\end{align}
with $\Gamma(T-t)\coloneqq \frac1 {\theta N^2}\|\delta \bfA\|_{\clL(V^N,(V^N)')}^2\|\clE p_\sigma(T-t)\|_{V^N}^2+\frac1{ N^2}\| Q_\rme \delta \bfx(T-t)\|_{Y^N}^2$. 
Then, with~$2R\coloneqq \|Q_\rme\|_{\clL(H^N,Y^N)}^2+2\rho$, Gronwall's lemma and~\eqref{eq:optcondiffC} give
\begin{align}
 \|\delta \bfp(T-t)\|_{H^N}^2 &\le \max{(1,\rme^{2R T})}\left(\|\delta \bfp(T)\|_{H^N}^2 + \int_0^T \Gamma(T-s) \mathrm ds\right)\notag\\
&\hspace{-0em}\le\max{(1,\rme^{2R T})}\left(\frac{1}{N^2}\|P_\rme^*\|_{\clL(Z^N,H^N)}^2\|P_\rme\delta \bfx(T)\|_{Z^N}^2 + \int_0^T \Gamma(s) \mathrm ds\right),\notag
\end{align}
for $t \in [0,T]$. Further, using Lemma~\ref{lem:objdiff1}, we find~$\int_0^T \frac1N \|Q_\rme \delta \bfx(s)\|_{Y^N}^2 \mathrm ds +\frac{1}{N}\|P_\rme\delta \bfx(T)\|_{Z^N}^2\le 2\clJ(\delta\bfx, \delta u) \le 2 \mathfrak{C}_{1} \|\delta \bfA\|_{\clL(V^N,(V^N)')}^2$, which leads to
\begin{align}
N \|\delta \bfp(T-t)\|_{H^N}^2&\le \max{(1,\rme^{2R T})}\left(2\mathfrak{C}_{1} + \int_0^T \frac1{\theta N}\|\clE p_\sigma(s)\|_{V^N}^2 \mathrm ds\right)\|\delta \bfA\|_{\clL(V^N,(V^N)')}^2\notag\\
&= \max{(1,\rme^{2R T})}\left(2\mathfrak{C}_{1} +\theta^{-1}\|p_\sigma(s)\|_{V_T}^2 \right)\|\delta \bfA\|_{\clL(V^N,(V^N)')}^2.\notag
\end{align}

Finally,~\eqref{eq:optcondiffE} gives $\|\delta u_{\varSigma}(t)\|_{U} \le \|\bfB^\ast\|_{\clL(H^N,U)}\|\delta\bfp(t)\|_{H^N}$, for all~$t \in [0,T]$. Hence, the claim follows from~$\|\bfB^\ast\|_{\clL(H^N,U)} = \|\bfB\|_{\clL(U,H^N)} \le \sqrt{N} \|B\|_{\clL(U,H)}$.
\end{proof}

\subsection{Trajectories associated to~\eqref{eq:cllsys} and~\eqref{eq:OP1sig}}\label{sS:summary}
The following result is an immediate consequence of Theorems~\ref{thm:pointwise} and~\ref{thm:pointwisestate2}, and Corollaries~\ref{coro:controlpointwise1}, and~\ref{coro:controlpointwise2}.
\begin{corollary}\label{coro:optim-pair-comp}
Let $x_\circ \in V$, let $(x_\sigma,u_\sigma)$ be the minimizer of~\eqref{eq:OP1sig},
and let~$\clE x_{\varSigma,\sigma}$ the solution of~\eqref{eq:cllsys} with $u_{\varSigma,\sigma}(t) \coloneqq  K_{\varSigma}(t,\clE x_{\varSigma,\sigma}t))$. Then, we have the estimates
\begin{align}
\|\clE x_\sigma(t) - \clE x_{\varSigma,\sigma}(t)\|_{H^N} \le \fkC_3\|\delta \bfA\|_{\clL(\clD_\clA^N,H^N)} + \fkC_4 \|\delta \bfA\|_{\clL(V^N,(V^N)')}, \notag\\
\|u_\sigma(t) - u_{\varSigma,\sigma}(t)\|_{U} \le \fkC_5 \|\delta \bfA\|_{\clL(\clD_\clA^N,H^N)} + \fkC_6 \|\delta \bfA\|_{\clL(V^N,(V^N)')},\notag
\end{align}
for $t\in [0,T]$, where~$\delta \bfA=\bfA_\varSigma - \bfA_\sigma$ and
\begin{align}
\fkC_3 &= \mathfrak{C}_T \| \clE x_{\varSigma,\sigma} + \clE g \|_{L^2(0,T;\clD_\clA^N)},&\quad\!\!\!
\fkC_4 &= \max{\left(1,e^{(\frac12+\rho)T}\right)}\fkC_{T2},\notag\\
\fkC_5 &= \mathfrak{C}_{\mathrm{uni}2} \mathfrak{C}_T \| \clE x_{\varSigma,\sigma} + \clE g \|_{L^2(0,T;\clD_\clA^N)}&\quad\!\!\!
\fkC_6 &=\|B\|_{\clL(U,H)}\mathfrak{C}_{T3} \notag,
\end{align}
with~$\mathfrak{C}_T$ as in Theorem~\ref{thm:pointwise}; $\mathfrak{C}_{T2}$ as in Theorem~\ref{thm:pointwisestate2}; $\mathfrak{C}_{\mathrm{uni}2}$ as in Corollary~\ref{coro:controlpointwise1}; and~$\mathfrak{C}_{T3}$ as in Corollary~\ref{coro:controlpointwise2}, and~$\rho\in\bbR$ as in~\eqref{eq:aux1}.
\end{corollary}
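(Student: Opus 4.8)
The plan is to obtain both estimates by inserting the ensemble-optimal pair~$(\bfx_\varSigma,u_\varSigma)$ as a common reference and invoking the triangle inequality, since each of the two quantities to be bounded factors through this shared object. For the state I would add and subtract~$\bfx_\varSigma(t)$ to write
\begin{align}
\clE x_\sigma(t) - \clE x_{\varSigma,\sigma}(t) = \bigl(\clE x_\sigma(t) - \bfx_\varSigma(t)\bigr) + \bigl(\bfx_\varSigma(t) - \clE x_{\varSigma,\sigma}(t)\bigr),\notag
\end{align}
and estimate the two summands in the~$H^N$-norm by Theorem~\ref{thm:pointwisestate2} and Theorem~\ref{thm:pointwise}, respectively, obtaining
\begin{align}
\|\clE x_\sigma(t) - \clE x_{\varSigma,\sigma}(t)\|_{H^N} &\le \|\bfx_\varSigma(t) - \clE x_\sigma(t)\|_{H^N} + \|\bfx_\varSigma(t) - \clE x_{\varSigma,\sigma}(t)\|_{H^N}\notag\\
&\le \fkC_4\,\|\delta\bfA\|_{\clL(V^N,(V^N)')} + \fkC_3\,\|\delta\bfA\|_{\clL(\clD_\clA^N,H^N)},\notag
\end{align}
with~$\fkC_3$ and~$\fkC_4$ read off directly from the right-hand sides of those two results.

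Analogously, for the controls I would decompose
\begin{align}
u_\sigma(t) - u_{\varSigma,\sigma}(t) = \bigl(u_\sigma(t) - u_\varSigma(t)\bigr) + \bigl(u_\varSigma(t) - u_{\varSigma,\sigma}(t)\bigr),\notag
\end{align}
and bound the first summand in the~$U$-norm by Corollary~\ref{coro:controlpointwise2} and the second by Corollary~\ref{coro:controlpointwise1}, yielding the stated constants~$\fkC_6$ and~$\fkC_5$. Both decompositions are legitimate because all four cited results are proved under the same hypothesis~$x_\circ\in V$ and share the \emph{same} extended minimizer~$(\bfx_\varSigma,u_\varSigma)$ as the added-and-subtracted intermediate term; consequently no regularity is lost in passing through it, and the corollary is indeed immediate.

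There is essentially no genuine obstacle here—the content lies entirely in the four preceding results—so the only point requiring care is the bookkeeping of the two distinct operator norms of~$\delta\bfA=\bfA_\varSigma-\bfA_\sigma$. Theorem~\ref{thm:pointwise} and Corollary~\ref{coro:controlpointwise1} supply the~$\clL(\clD_\clA^N,H^N)$-norm, whereas Theorem~\ref{thm:pointwisestate2} and Corollary~\ref{coro:controlpointwise2} supply the~$\clL(V^N,(V^N)')$-norm. I would keep these two norms separate throughout rather than attempt to combine them (the Remarks subsection already records a commutativity condition under which they are equivalent), so that the final bounds are precisely the two-term expressions appearing in the statement. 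Matching the constants then completes the proof.
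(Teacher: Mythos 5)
Your proof is correct and is exactly the argument the paper intends: the paper declares the corollary an immediate consequence of Theorems~\ref{thm:pointwise} and~\ref{thm:pointwisestate2} and Corollaries~\ref{coro:controlpointwise1} and~\ref{coro:controlpointwise2}, and the only way to combine them is precisely your triangle inequality through the common intermediate pair~$(\bfx_\varSigma,u_\varSigma)$, with the two operator norms of~$\delta\bfA$ kept separate as you do. The constants match the statement term by term, so nothing further is needed.
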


\section{Numerical experiments}\label{S:numEx}

We present numerical experiments supporting our theoretical findings discussed in Section~\ref{S:optimCost}. Moreover, for a given target state $g$, and given a parameter ensemble~$\varSigma = (\sigma_i)_{i = 1}^N$ the performance of the feedback~$K_{\varSigma}$, as defined in~\eqref{eq:robFB}, is compared with the optimal feedback~$K_{\bar \sigma}$ for the ensemble average~$\bar \sigma \coloneqq \frac1N \sum_{i=1}^N \sigma_i$ of the parameters, that is we compare the closed-loop systems
\begin{align}
\label{eq:clsnum}
\dot{y}_\sigma(t) &= \clA_\sigma y_\sigma(t) + B K(t,y_\sigma(t)-g(t)), \qquad y_\sigma(0) = y_\circ,
\end{align}
for test parameters~$\sigma \in \fkS$, and for $K \in \{K_{\varSigma},K_{\bar \sigma}\}$, in~$[0,T]$ given by
\begin{align}
K_{\varSigma}(t, y_{\sigma}(t)-g(t)) &= -\bfB^* (\bfPi_{\varSigma}(T-t) \clE (y_{\sigma}(t)-g(t)) + \bfh(t)),\label{eq:robfbcont}\\
K_{\bar \sigma}(t,y_\sigma(t)-g(t)) &= -B^* (\Pi_{\bar \sigma}(T-t) (y_{\sigma}(t)-g(t)) + h(t)), \label{eq:averfbcont}
\end{align}
where~$\Pi_{\bar \sigma}$ solves $\dot{\Pi}_{\bar \sigma}(t) = \Pi_{\bar \sigma}(t) \clA_{\bar \sigma} + \clA_{\bar \sigma}^\ast \Pi_{\bar \sigma}(t) - \Pi_{\bar \sigma}(t) B B^\ast \Pi_{\bar \sigma}(t) + \frac1N Q^\ast Q$, $t \in [0,T]$ with $\Pi_{\bar \sigma}(0) = \Pi_\circ$, and $h$ solves $-\dot{h}(t) = \left(A_{\bar \sigma}^\ast - \Pi_{\bar \sigma}(T-t)B B^\ast\right) h(t) + \Pi_{\bar \sigma}(T-t) (\clA_{\bar \sigma} g(t) - \dot{g}(t))$, $t \in [0,T)$, with $h(T) = 0$ (cf.~Section~\ref{sS:optim-ext}; note also that~$\Pi_{\bar{\sigma}} = \Pi_{\{\bar{\sigma}\}}$).

Given a parameter~$\sigma \in \fkS$, we denote the solution of~\eqref{eq:clsnum} with~$K = K_{\varSigma}$ by~$y_{\varSigma,\sigma}$, and the solution of~\eqref{eq:clsnum} with~$K = K_{\bar \sigma}$ by~$y_{\bar \sigma,\sigma}$.

\subsection{Oscillator}
Let us consider the differential equation
\begin{align}\label{eq:oscillator2nd}
\ddot{\theta}(t) &= - \theta(t) - \sigma \dot{\theta}(t) + u(t),\quad t \in [0,T],\\
\theta(0) &= \theta_\circ, \quad \dot{\theta}(0) = \theta_{\circ,1}.
\end{align}
Thus, the damping parameter~$\sigma$ is allowed to be uncertain. We consider an ensemble~$\varSigma = (\sigma_i)_{i=1}^N$ of possible values of~$\sigma$, and write the second order equation~\eqref{eq:oscillator2nd}, for each~$\sigma_i$, as
\begin{align}\label{eq:oscillator1st}
\begin{split}
\dot{y}_{\sigma_i}(t) &= \clA_{\sigma_i} x_{\sigma_i}(t) + B u(t), \quad t \in [0,T],\quad  1\le i \le N,\\
y_{\sigma_i}(0) &= y_\circ, \quad \quad \quad\quad \quad \quad \quad \quad \quad\quad \quad \quad\quad\, 1\le i \le N,
\end{split}
\end{align}
with~$\clA_{\sigma_i} = \begin{bmatrix} 0 &1 \\ -1 &-\sigma_i \end{bmatrix}$, $B = \begin{bmatrix} 0 \\ 1 \end{bmatrix}$, initial condition $y_\circ = \begin{bmatrix} \theta_\circ\\ \theta_{\circ,1} \end{bmatrix} \in \mathbb{R}^2$, and corresponding states~$y_{\sigma_i} = \begin{bmatrix} \theta &\dot{\theta}\end{bmatrix}^\top =  \begin{bmatrix} \theta_{\sigma_i}& \dot{\theta}_{\sigma_i}\end{bmatrix}^\top$.
The target function $g$, is chosen to solve~\eqref{eq:oscillator1st} with $\sigma = 1$ and initial condition $y_{\circ} = \begin{bmatrix} 1 & 0\end{bmatrix}^\top$. Further, we set~$T=5$, $Q = \begin{bmatrix} \sqrt{10} & 0 \end{bmatrix}^\top$, and $P = \mathbf{1}_{\bbR^2}$, where~$\mathbf{1}_{\bbR^2}$ denotes the identity matrix in~$\bbR^2$. 

The parameter ensembles will be described as
\begin{align*}
\varSigma^{R+1}_\ell \coloneqq \Big\{\Big(-1 + \frac{2r}{R}\Big)\ell \mid 0\le r \le R\Big\},
\end{align*}
where~$R+1$ denotes the cardinality of the ensemble, i.e., the number of parameters~$\sigma_r\in\varSigma^{R+1}_\ell$, $0\le r\le R$, and~$\ell >0$ determines the range of the parameter set, and hence resembles the level of uncertaintiy in the problem. The feedback~$K_{\bar{\sigma}}$ in~\eqref{eq:averfbcont} is not affected by changes in~$\ell$ or~$R \in \bbN$, since~$\bar{\sigma} = 0$.

\begin{figure}[tb]
\centering
  \includegraphics[width=\linewidth]{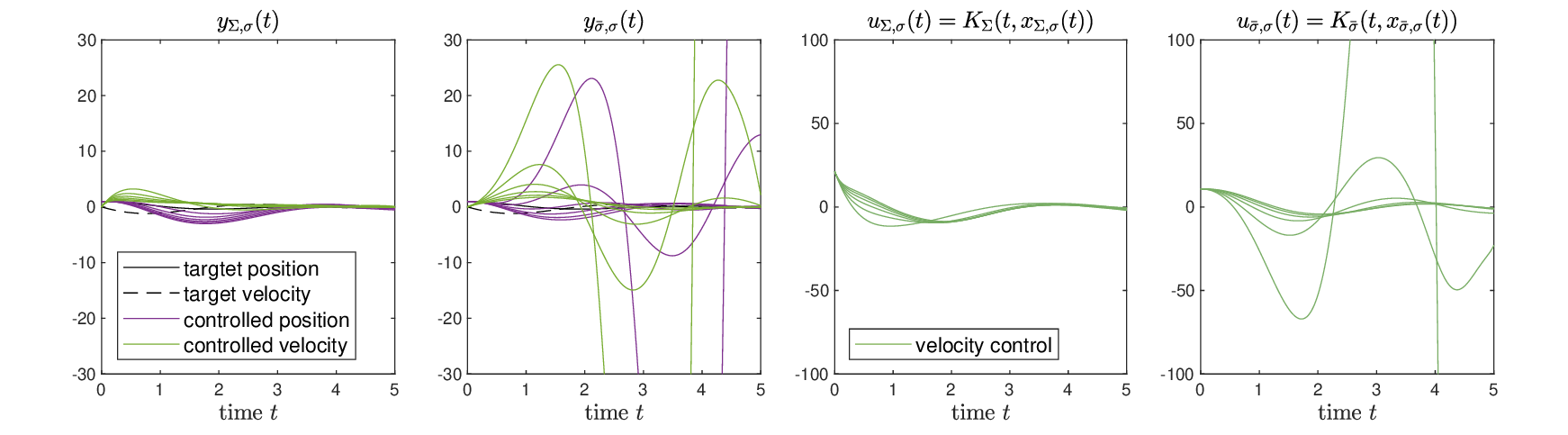}
  \caption{The feedback controls are computed using~$\varSigma_{2}^{5}$ and tested on~$\varSigma_{4}^{6}$. Left: state trajectories corresponding to the feedback control~\eqref{eq:robfbcont} (left) and the feedback control~\eqref{eq:averfbcont} (middle left). Right: feedback control~\eqref{eq:robfbcont} (middle right) and feedback control~\eqref{eq:averfbcont} (right). }
\label{fig:osc1}
\end{figure}

In Figure~\ref{fig:osc1} the feedback control~\eqref{eq:robfbcont} is compared to the feedback control~\eqref{eq:averfbcont} along with the corresponding closed-loop state trajectories~$y_{\varSigma,\sigma}$ and~$y_{\bar \sigma,\sigma}$, respectively. Here, the feedbacks~~\eqref{eq:robfbcont} and~\eqref{eq:averfbcont} are constructed based on the training ensemble~$\varSigma_2^5$, and then tested in the systems with parameters~$\sigma \in \varSigma_{4}^{6}$, i.e.,~$6$ test trajectories for each component (position and velocity) are displayed. It is observed that the feedback control~\eqref{eq:robfbcont} is much more robust with respect to parameter variations than~\eqref{eq:averfbcont}: the feedback~\eqref{eq:averfbcont} leads to worthless controls, which fail to track the target for the two most unstable test parameters~$\sigma=-4$ and~$\sigma = -2.4$, whereas the feedback~\eqref{eq:robfbcont} still steers the respective states close to the target.

\begin{figure}[tb]
\centering
  \includegraphics[width=\linewidth]{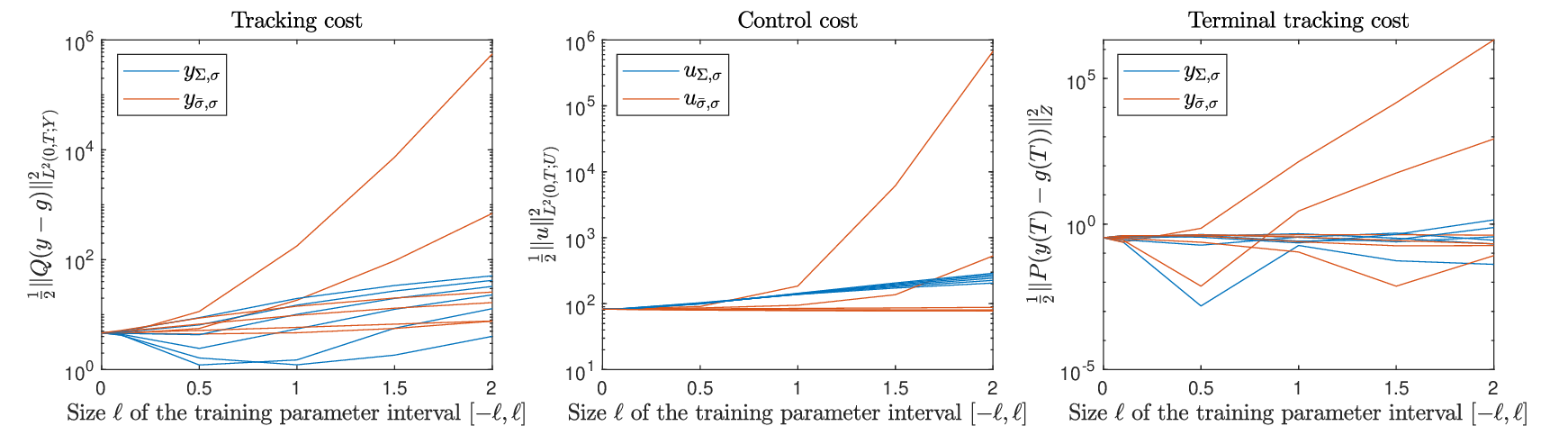}
  \caption{Increasing training parameter interval~$\varSigma_\ell^5$, and increasing test parameter interval~$\varSigma^{6}_{2\ell}$ for~$\ell \in \{0,\frac{1}{10},\frac{1}{2},1,\frac{3}{2},2\}$. Left: tracking cost~$\frac12 \|Q(y_\sigma - g)\|^2_{L^2(0,T;Y)}$. Middle: feedback control cost~$\frac12 \|u\|^2_{L^2(0,T;U)}$. Right: terminal tracking cost~$\frac12 \|P(y_\sigma - g)\|^2_{Z}$.}
  \label{fig:osc2}
\end{figure}

\begin{figure}[tb]
\centering
  \includegraphics[width=\linewidth]{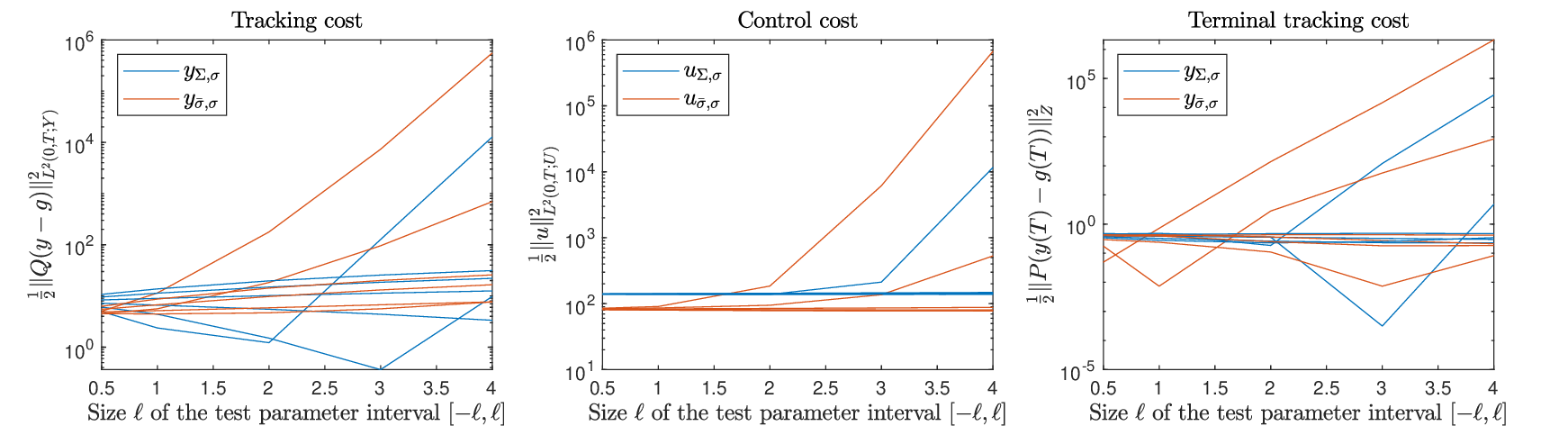}
  \caption{Fixed training parameter interval~$\varSigma_1^5$, and increasing test parameter interval~$\varSigma^{6}_\ell$ for~$\ell \in \{\frac{1}{2},1,2,3,4\}$. Left: tracking cost~$\frac12 \|Q(y_\sigma - g)\|^2_{L^2(0,T;Y)}$. Middle: feedback control cost~$\frac12 \|u\|^2_{L^2(0,T;U)}$. Right: terminal tracking cost~$\frac12 \|P(y_\sigma - g)\|^2_{Z}$.}
  \label{fig:osc4}
\end{figure}
The superior robustness against parameter variations of~\eqref{eq:robfbcont} compared to~\eqref{eq:averfbcont} is also reflected in the associated costs, which are displayed for different levels of uncertainty $\ell \in \{0,\frac{1}{10},\frac{1}{2},1,\frac{3}{2},2\}$ in Figure~\ref{fig:osc2}. Here, the feedbacks~~\eqref{eq:robfbcont} and~\eqref{eq:averfbcont} are constructed based on the training ensembles~$\varSigma_\ell^5$, and then tested in the systems with parameters~$\sigma \in \varSigma_{2\ell}^{6}$. 
It is observed that, with increasing level of uncertainty~$\ell$, the feedback~\eqref{eq:averfbcont} leads to rapidly increasing tracking cost for the two most unstable test parameters~$\sigma=-4$ and~$\sigma = -2.4$, whereas the tracking cost associated with~\eqref{eq:robfbcont} grows much slower. For~$\ell = 2$, the largest tracking costs in the parameter test set~$\varSigma_{2\ell}^{6}$ are~$\frac12 \|Q(y_{\bar{\sigma},-4} - g)\|_{L^2(0,T;Y)}^2 = 552870$ and~$\frac12 \|Q(y_{\varSigma_\ell^5,4} - g)\|_{L^2(0,T;Y)}^2 \approx 51.0$, and the terminal tracking costs are~$\frac12 \|Q(y_{\bar{\sigma},-4} - g)\|_{Z}^2 = 2115000$ and~$\frac12 \|Q(y_{\varSigma_\ell^5,4} - g)\|_{L^2(0,T;Z)}^2 \approx 1.4$.

For less extreme test parameters, which result in more stable systems, the tracking performance of both feedbacks is similar, while the robust feedback in this case comes at higher control cost, see Figure~\ref{fig:osc2}. However, for the two most unstable test parameters~$\sigma = -4$ and~$\sigma = -2.4$, the feedback~\eqref{eq:robfbcont} also leads to smaller control cost than the feedback~\eqref{eq:averfbcont}.

In Figure~\ref{fig:osc4}, the feedbacks~\eqref{eq:robfbcont} and~\eqref{eq:averfbcont} are compared for a fixed training parameter interval~$\varSigma_1^5$ and increasing test parameter interval~$\varSigma_\ell^6$ for~$\ell \in \{0.5,1,2,3,4\}$. A similar relationship is observed: for larger levels of uncertainty~$\ell$, the cost associated with the feedback~\eqref{eq:averfbcont} (in red) is much larger for unstable systems than the cost associated with the feedback~\eqref{eq:robfbcont} (in blue). For stable systems, the tracking performance of the both feedbacks is again similar. Overall, for this example the robustness of the feedback~\eqref{eq:robfbcont} comes at the possible expense of higher control cost.

Finally, in accordance with Corollary~\ref{coro:smallrob}, the costs converge  as the difference of the test parameters tend to zero, i.e., as~$\ell$ tends to $0$, see  Figure~\ref{fig:osc2}.

\subsection{Convection-diffusion-reaction equation}

Let us consider the parameterized convection-diffusion-reaction equation under Neumann boundary conditions as follows
\begin{align*}
\dot{y}_{\sigma} - \nabla \cdot (a_\sigma \nabla y_\sigma) + c y_\sigma + \nabla \cdot(b y_\sigma) &= \sum_{i = 1}^{N_{\mathrm a}} u_i \mathbf{1}_{O_i} &&(t,s) \in (0,T] \times D,\\
\frac{\partial y_\sigma}{\partial \mathrm n} &= 0 &&(t,s) \in [0,T] \times \partial D,\\
y_\sigma &= y_\circ &&(t,s) \in \{t=0\} \times D,
\end{align*}
where~$T=5$,~$D = (0,1)$ with boundary~$\partial D = \{0,1\}$ and the functions~$\mathbf{1}_{O_i}$ represent the support of the actuators, which are modelled as the characteristic functions related to open sets~$O_i \subset D$ for~$1\le i \le N_a$. It is assumed that the reaction coefficient~$c$ and the convection coefficient~$b$ are given constants, and that the parameter~$\sigma = (\sigma_1,\ldots,\sigma_{N_s}) \in \bbR^{N_s}$ enters the diffusion, that is
\begin{align}\label{eq:lognormal}
a_\sigma(s) = \bar{a}(s) \exp\bigg(\sum_{j=1}^{N_s} \sigma_j \psi_j(s)\bigg),
\end{align}
for~$\bar{a} \in \mathcal{C}^0(\overline{D})$ and $\psi_j \in L^\infty(D)$ for all $1 \le j \le N_s$. Assuming that~$\bar{a}(s)>0$~$\forall s\in D$, it follows that there exist~$a_{\sigma,\min}$ and~$a_{\sigma,\max}$ depending on~$\sigma$ such that
\begin{align*}
0<a_{\sigma,\min} \le a_\sigma(s) \le a_{\sigma,\max} < \infty \quad \mbox{for all } s \in D \mbox{ and } \sigma \in \bbR^{N_s}.
\end{align*}
The representation~\eqref{eq:lognormal} is called a lognormal parameterization, if the parameters~$\sigma = (\sigma_j)_{j=1}^{N_s}$ are independently and identically distributed (i.i.d.) standard normal random variables, that is~$(\sigma_j)_{j=1}^{N_s} \sim \bbP \coloneqq \bigotimes_{j=1}^{N_s} \mathcal{N}(0,1)$, see, e.g.,~\cite{BabsukaNobileTempone}. Parameterizations of this form have origins in Karhunen--Lo\`eve expansions of lognormal random fields, see, e.g.,~\cite{schwab_gittelson_2011}.

\begin{figure}[tb]
\centering
\begin{minipage}[t]{.48\textwidth}
  \includegraphics[width=\linewidth]{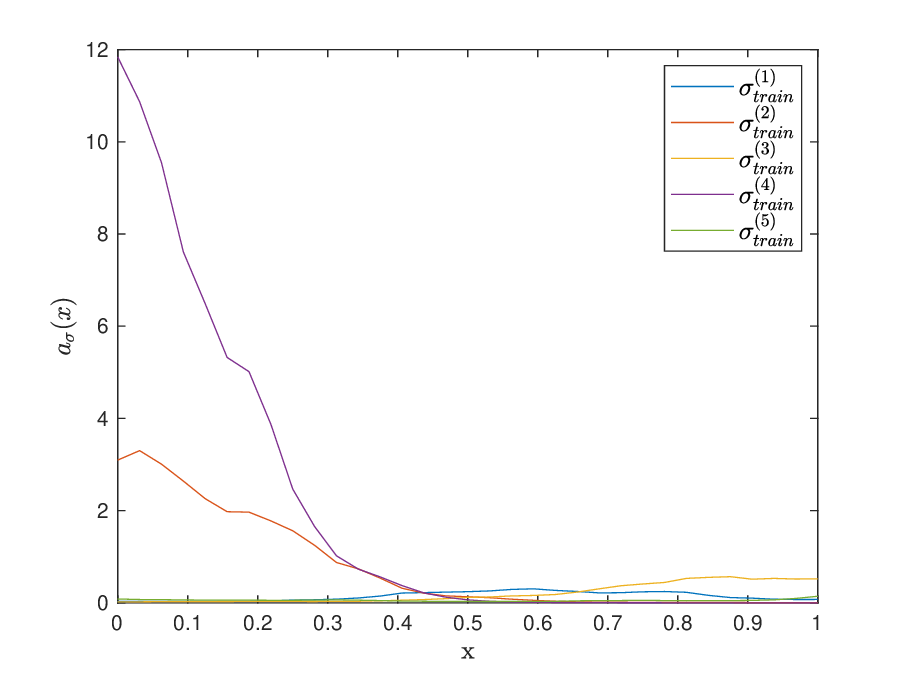}
\end{minipage}
\begin{minipage}[t]{.48\textwidth}
  \includegraphics[width=\linewidth]{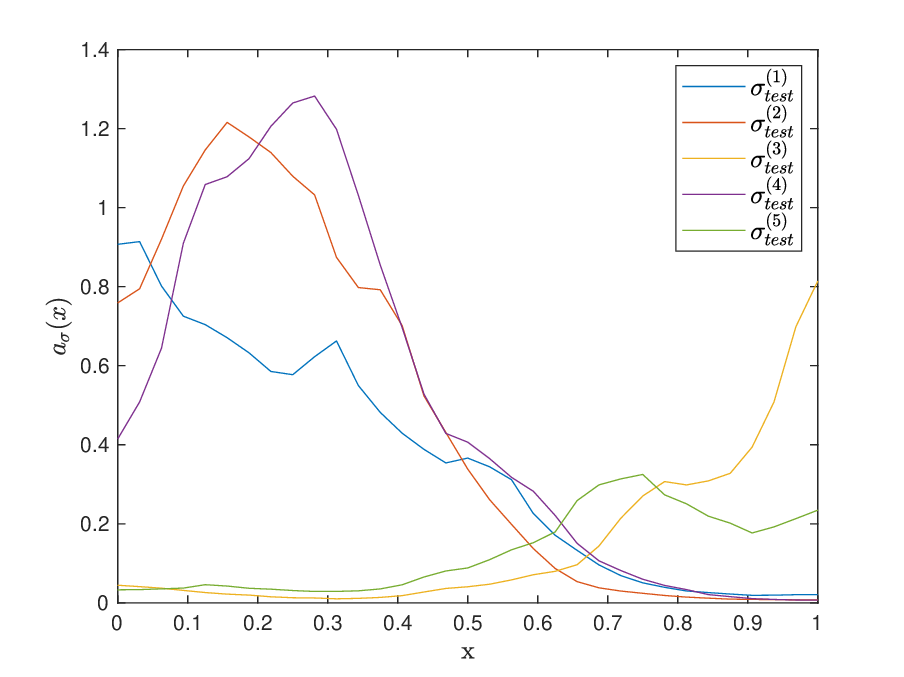}
\end{minipage}
  \caption{Five realizations of the random diffusion coefficient~\eqref{eq:lognormal}. Left: training set. Right: test set.}
  \label{fig:osc11}
\end{figure}
\begin{figure}[tb]
\centering
  \includegraphics[width=\linewidth]{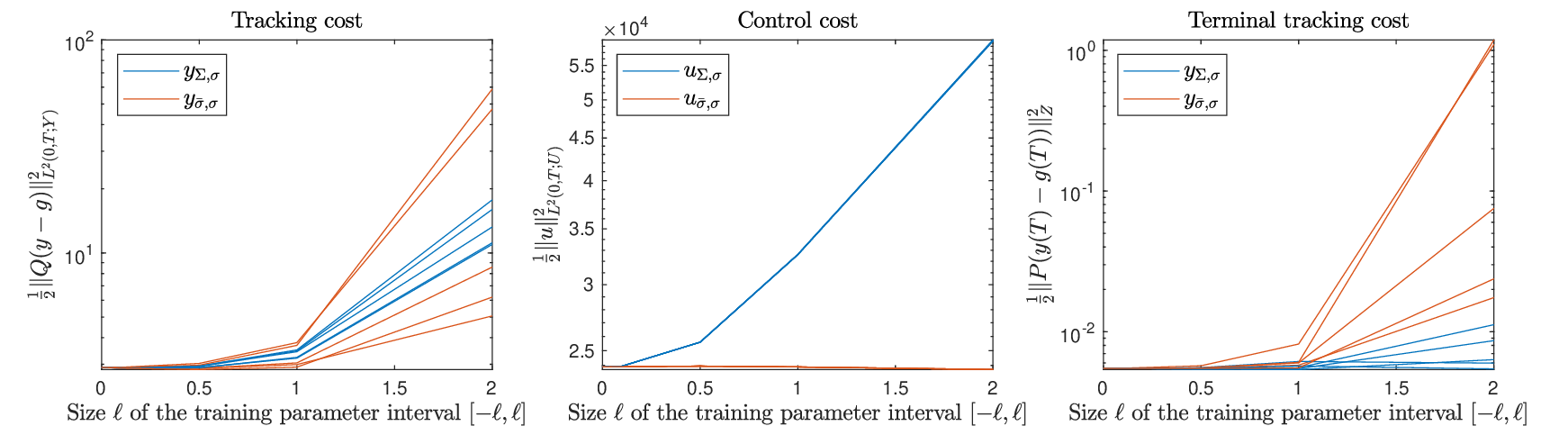}
\caption{Cost for a set of $N=5$ test parameters. Left: tracking cost~$\frac12 \|Q(y_\sigma - g)\|^2_{L^2(0,T;Y)}$. Middle: feedback control cost~$\frac12 \|u\|^2_{L^2(0,T;U)}$. Right: terminal tracking cost~$\frac12 \|P(y_\sigma - g)\|^2_{Z}$.}
\label{fig:osc8}
\end{figure}
\begin{figure}[tb]
\centering
\begin{minipage}[t]{1.\textwidth}
  \includegraphics[width=\linewidth]{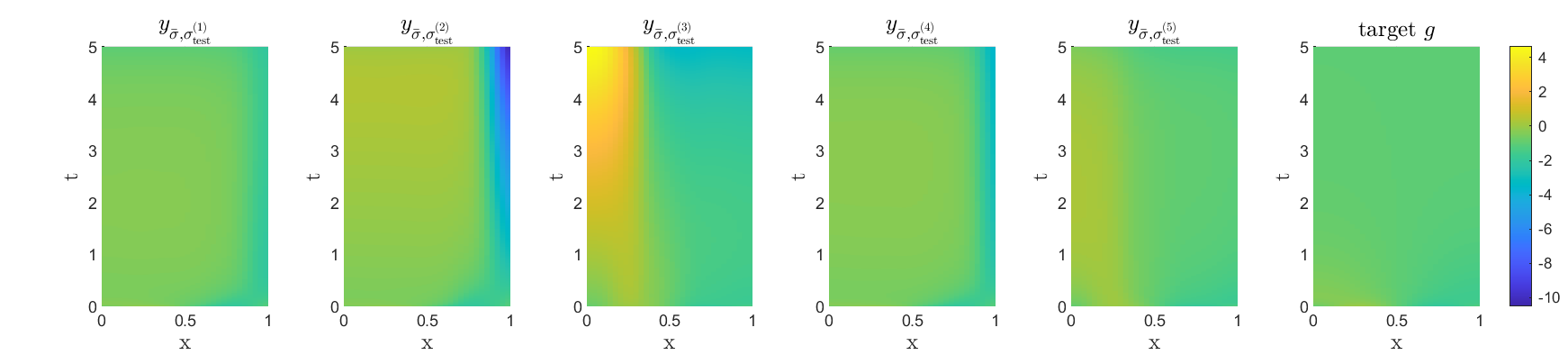}
\end{minipage}\\
\begin{minipage}[t]{1.\textwidth}
  \includegraphics[width=\linewidth]{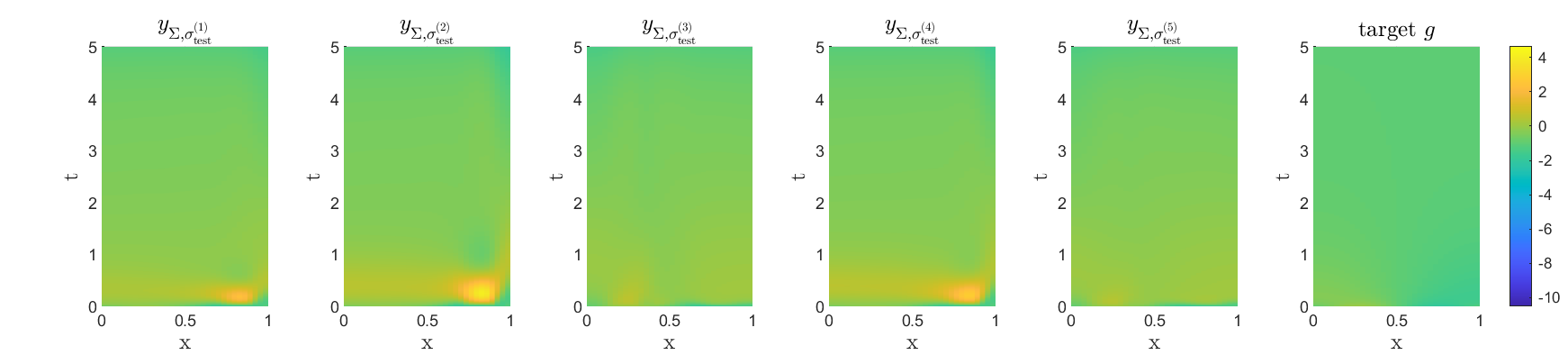}
\end{minipage}
  \caption{Without convection $b=0$. Five realizations of state trajectories corresponding to the feedback control~\eqref{eq:robfbcont} (bottom) and the feedback control~\eqref{eq:averfbcont} (top).}
  \label{fig:osc9}
\end{figure}
\begin{figure}[tb]
\centering
  \includegraphics[width=\linewidth]{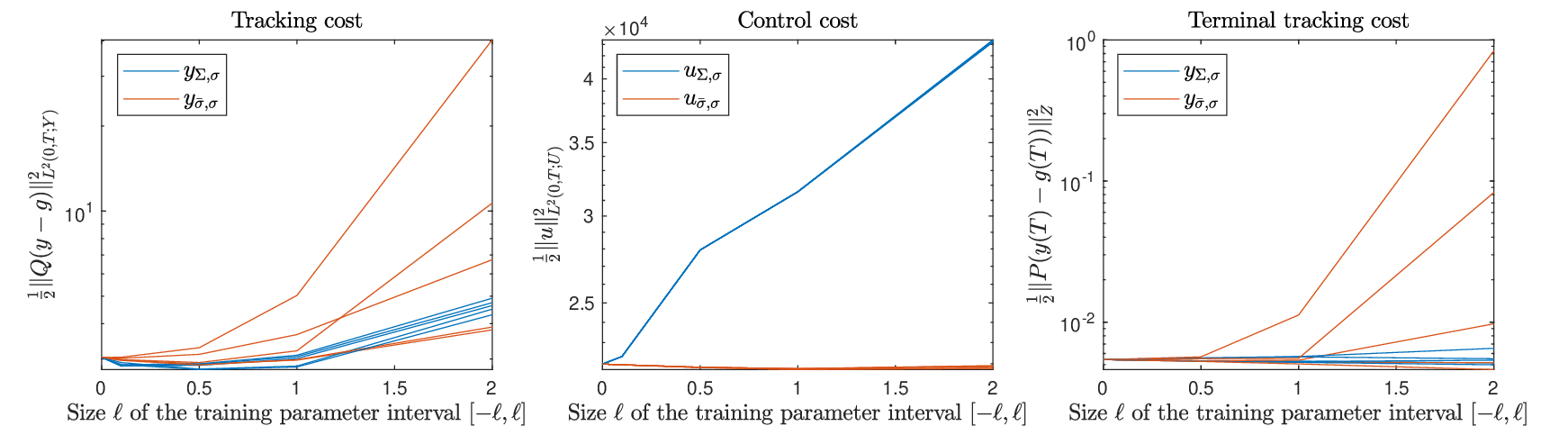}
\caption{Cost for a set of $N=5$ test parameters. Left: tracking cost~$\frac12 \|Q(y_\sigma - g)\|^2_{L^2(0,T;Y)}$. Middle: feedback control cost~$\frac12 \|u\|^2_{L^2(0,T;U)}$. Right: terminal tracking cost~$\frac12 \|P(y_\sigma - g)\|^2_{Z}$.}
\label{fig:osc10}
\end{figure}
\begin{figure}[tb]
\centering
\begin{minipage}[t]{1.\textwidth}
  \includegraphics[width=\linewidth]{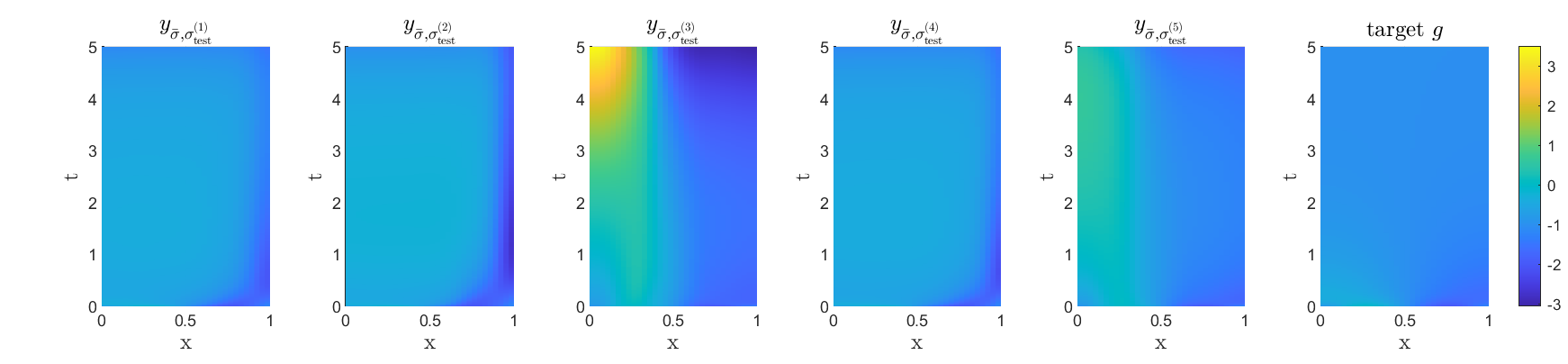}
\end{minipage}\\
\begin{minipage}[t]{1.\textwidth}
  \includegraphics[width=\linewidth]{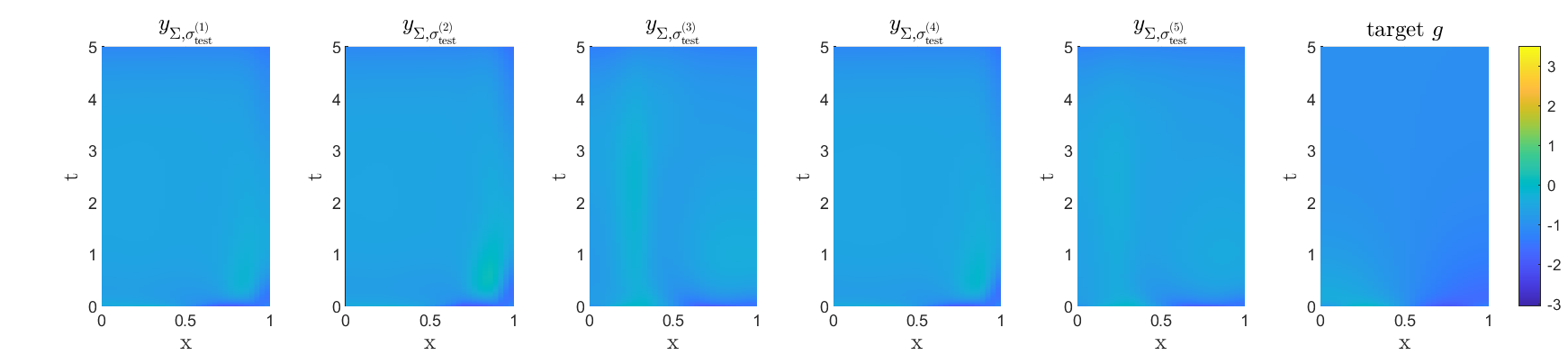}
\end{minipage}
  \caption{With convection $b=0.1$. Five realizations of state trajectories corresponding to the feedback control~\eqref{eq:robfbcont} (bottom) and the feedback control~\eqref{eq:averfbcont} (top).}
  \label{fig:osc11}
\end{figure}

In the numerical experiments~$N_a = 3$ actuators are used as~$O_1 = [0.1, 0.3]$, $O_2 = [0.4, 0.6]$, and $O_3 = [0.7, 0.9]$. In~\eqref{eq:lognormal} the mean field is set to~$\bar{a}=0.1$ and it is assumed that the diffusion depends on~$N_s = 100$ realizations of i.i.d.~standard normal random variables, and parametric basis functions~$\psi_{2j}(x) = (2j)^{-\nu} \sin(j\pi x)$ and~$\psi_{2j-1}(x) = (2j-1)^{-\nu} \cos(j\pi x)$ with~$\nu = \frac{3}{2}$, cf.~\cite{gantner}. Further, a constant reaction coefficient~$c =-1$ is assumed, and we set~$Q = \sqrt{10}\cdot \mathcal{P}_F$, where~$\mathcal{P}_F$ is the orthogonal projection in~$H$ onto~$\mathrm{span}(F)$, where~$F = \begin{bmatrix} \mathbf{1} & \cos(\pi x) & \cos(2\pi x)\end{bmatrix}^\top$, and $P = \mathbf{1}_H$ in~\eqref{eq:obj}, where~$\mathbf{1}_H$ denotes the identity operator in~$H$, as well as the initial condition~$y_\circ(x) = \sin(2\pi x)-1$. The target~$g$ solves the heat equation~$\dot{g} = 0.1 \Delta g$ with the same boundary and initial data.

To construct the feedbacks,~$5$ training vectors, each containing~$N_s = 100$ realizations of i.i.d.~standard normal random variables, are drawn. In order to investigate different variance levels in Figures~\ref{fig:osc8} and~\ref{fig:osc10}, the training vectors are multiplied by a scalar~$\ell \in \{0,\frac{1}{10},\frac{1}{2},1,2\}$. The feedbacks are then tested with~$5$ different test parameters, each of which consists of~$N_s = 100$ realizations of i.i.d.~standard normal random variables.

Results without convection are displayed in Figure~\ref{fig:osc8} and Figure~\ref{fig:osc9}, and results with convection $b=0.1$ are displayed in Figure~\ref{fig:osc10} and Figure~\ref{fig:osc11}. In the case without convection the feedback~\eqref{eq:robfbcont} (in blue) has smaller terminal tracking costs than the feedback~\eqref{eq:averfbcont} (in red) for all tested diffusion coefficients, see Figure~\ref{fig:osc8}. In addition, it has smaller tracking cost for some of the tested diffusion coefficients. The more robust tracking performance comes at the expense of higher control cost for~\eqref{eq:robfbcont}.

Similarly, in the case with convection~$b=0.1$, the feedback~\eqref{eq:robfbcont} tracks the target better than the feedback~\eqref{eq:averfbcont}: while~\eqref{eq:averfbcont} clearly fails to track the target for~$\sigma_{\mathrm{test}}^{(2)}$, $\sigma_{\mathrm{test}}^{(3)}$, and~$\sigma_{\mathrm{test}}^{(5)}$, the feedback~\eqref{eq:robfbcont} tracks the target much better for the tested realizations of the diffusion coefficient~\eqref{eq:lognormal}, see Figure~\ref{fig:osc11}. The improved tracking performance of~\eqref{eq:robfbcont} is reflected in the associated costs in Figure~\ref{fig:osc10} and again comes at the expense of higher control cost. The control costs are insensitive to changes of the test parameters, such that no difference between the cost for different test parameters can be seen in Figure~\ref{fig:osc10}. The same phenomenon is observed in Figure~\ref{fig:osc8}.

In summary, in both cases the feedback~\eqref{eq:robfbcont} is more robust against variations in the diffusion coefficient at the cost of higher control costs.

Finally, in accordance with Corollary~\ref{coro:smallrob}, the costs converge as the difference of the test parameters tend to zero, i.e., as~$\ell$ tends to $0$, see Figures~\ref{fig:osc8} and~\ref{fig:osc10}.


\bigskip\noindent
{\bf Aknowlegments.}
S. Rodrigues gratefully acknowledges partial support from
the State of Upper Austria and Austrian Science
Fund (FWF): P 33432-NBL.





\bibliographystyle{plainurl}
\bibliography{robust_tracking}


%
%
%
\end{document}